\numberwithin{equation}{section}
\newtheorem{theorem}{Theorem}[section]
\newtheorem{lemma}[theorem]{Lemma}
\newtheorem{proposition}[theorem]{Proposition}
\newtheorem{corollary}[theorem]{Corollary}
\theoremstyle{definition}
\newtheorem{remark}[theorem]{Remark}
\newtheorem*{acks}{Acknowledgements}
\theoremstyle{remark}
\newcounter{thmenumerate}
\newenvironment{thmenumerate}
{\setcounter{thmenumerate}{0}%
 \def\item{\par
 \refstepcounter{thmenumerate}\textup{(\roman{thmenumerate})\enspace}}
}
{}
\newcounter{xenumerate}   
\newcommand\pfitem[1]{\par(#1):}
\newcounter{CC} 
\newcommand{\CC}{\stepcounter{CC}\CCx} 
\newcommand{\CCx}{C_{\arabic{CC}}}     
\newcounter{cc}
\newcommand{\cc}{\stepcounter{cc}\ccx} 
\newcommand{\ccx}{c_{\arabic{cc}}}     
\newcommand{\refT}[1]{Theorem~\ref{#1}}
\newcommand{\refL}[1]{Lemma~\ref{#1}}
\newcommand{\refR}[1]{Remark~\ref{#1}}
\newcommand{\refS}[1]{Section~\ref{#1}}
\newcommand{\refA}[1]{Appendix~\ref{#1}}
\newcommand{\refand}[2]{\ref{#1} and~\ref{#2}}
\xdef\klockan{\the\count1.0\the\count255}
\xdef\klockan{\the\count1.\the\count255}\fi
\newcommand\set[1]{\ensuremath{\{#1\}}}
\newcommand\bigset[1]{\ensuremath{\bigl\{#1\bigr\}}}
\newcommand\xpar[1]{(#1)}
\newcommand\bigpar[1]{\bigl(#1\bigr)}
\newcommand\Bigpar[1]{\Bigl(#1\Bigr)}
\newcommand\biggpar[1]{\biggl(#1\biggr)}
\newcommand\lrpar[1]{\left(#1\right)}
\newcommand\bigabs[1]{\bigl|#1\bigr|}
\newcommand\lrabs[1]{\left|#1\right|}
\def\rompar(#1){\textup(#1\textup)}    
\newcommand\xfrac[2]{#1/#2}
\newcommand\parfrac[2]{\Bigpar{\frac{#1}{#2}}}
\newcommand\expx[1]{\exp\bigl(#1\bigr)}
\newcommand\expQ[1]{e^{#1}}
\def\xexp(#1){e^{#1}}
\newcommand\ntoo{\ensuremath{{n\to\infty}}}
\newcommand\rtoo{\ensuremath{{r\to\infty}}}
\newcommand\xtoo{\ensuremath{{x\to\infty}}}
\newcommand\ztoo{\ensuremath{{|z|\to\infty}}}
\newcommand\ttoo{\ensuremath{{t\to\infty}}}
\newcommand\half{\tfrac12}
\newcommand\ie{i.e.\spacefactor=1000}
\newcommand\eg{e.g.\spacefactor=1000}
\newcommand\viz{viz.\spacefactor=1000}
\newcommand\cf{cf.\spacefactor=1000}
\newcommand{\as}{a.s.\spacefactor=1000}
\newcommand\ii{\mathbf{i}}
\newcommand\eqd{\overset{\mathrm{d}}{=}}
\newcommand\bbR{\mathbb R}
\newcommand\bbC{\mathbb C}
\renewcommand\Re{\operatorname{Re}}
\newcommand\E{\operatorname{\mathbb E{}}}
\renewcommand\P{\operatorname{\mathbb P{}}}
\newcommand\ga{\alpha}
\newcommand\gb{\beta}
\newcommand\gd{\delta}
\newcommand\gam{\gamma}
\newcommand\gl{\lambda}
\newcommand\gs{\sigma}
\newcommand\gt{\theta}
\newcommand\eps{\varepsilon}
\renewcommand\phi{\varphi}
\newcommand\tg{\tilde g}
\newcommand\fh{h}
\newcommand\tfh{\tilde h}
\newcommand\hfh{\hat h}
\newcommand\cB{\mathcal B}
\def\[#1]{[\![#1]\!]}
\newcommand\qq{^{1/2}}
\newcommand\qqq{^{1/3}}
\newcommand\qqqq{^{1/4}}
\newcommand\qqa{^{2/3}}
\newcommand\qqc{^{3/2}}
\newcommand\qqw{^{-1/2}}
\newcommand\qqqw{^{-1/3}}
\newcommand\qqqqw{^{-1/4}}
\newcommand\qqaw{^{-2/3}}
\newcommand\qqcw{^{-3/2}}
\newcommand\qw{^{-1}}
\newcommand\qww{^{-2}}
\newcommand\sqrtt{\sqrt{2}\,}
\renewcommand{\=}{:=}
\newcommand\intoi{\int_0^1}
\newcommand\intoo{\int_0^\infty}
\newcommand\intoooo{\int_{-\infty}^\infty}
\newcommand\oi{[0,1]}
\newcommand\dd{\,\mathrm{d}}
\newcommand\rhs{right hand side}
\newcommand\br{_{\mathrm{br}}}
\newcommand\bm{_{\mathrm{bm}}}
\newcommand\dm{_{\mathrm{dm}}}
\newcommand\me{_{\mathrm{me}}}
\newcommand\ex{_{\mathrm{ex}}}
\newcommand\brp{_{\mathrm{br+}}}
\newcommand\bmp{_{\mathrm{bm+}}}
\newcommand\brm{_{\mathrm{br-}}}
\newcommand\bmpm{_{\mathrm{bm\pm}}}
\newcommand\cBex{\cB\ex}
\newcommand\cBbr{\cB\br}
\newcommand\cBbm{\cB\bm}
\newcommand\cBme{\cB\me}
\newcommand\cBdm{\cB\dm}
\newcommand\cBbrp{\cB\brp}
\newcommand\cBbmp{\cB\bmp}
\newcommand\xbr{{\mathrm{br}}}
\newcommand\xbm{{\mathrm{bm}}}
\newcommand\xme{{\mathrm{me}}}
\newcommand\xdm{{\mathrm{dm}}}
\newcommand\xex{{\mathrm{ex}}}
\newcommand\xbrp{{\mathrm{br+}}}
\newcommand\xbmp{{\mathrm{bm+}}}
\newcommand\Ai{\mathrm{Ai}}
\newcommand\AI{\mathrm{AI}} 
\newcommand\Bi{\mathrm{Bi}}
\newcommand\BI{\mathrm{BI}}
\newcommand\phit{t}
\newcommand\phif{F}
\newcommand\tphif{\widetilde F}
\newcommand\glx{x}
\newcommand\glz{z}
\newcommand\Hx{\Psi^*}
\newcommand\Hxbrp{\Hx\brp}
\newcommand\Hxbmp{\Hx\bmp}
\newcommand\zetax{{2z^{3/2}/3}}
\newcommand\xxi{_{\Xi}}
\newcommand\xxix[1]{_{#1}^{\Xi}}
\newcommand\gamxi{\gam}
\newcommand\aaz{|\arg z|}
\newcommand\hh{h^0}
\newcommand\hhh{h^1}
\newcommand\fx{f^*}
\newcommand\Qc{Q^c}
\newcommand\slitplane{the slit plane $\bbC\setminus(-\infty,0]$}
\newcommand\xu{v}
\newcommand\tu{\tilde u}
\newcommand\tv{\tilde v}
\newcommand\tgt{\tilde \gt}
\newcommand\ttet{\tilde \tet}
\newcommand\qr{w}
\newcommand\xr{\tu}
\newcommand\xs{\ttet}
\newcommand{\Takacs}{Tak\'acs}
\newcommand{\Voblyi}{Vobly\u\i}
\newcommand\PW{Perman and Wellner \cite{PermanW}}
\newcommand{\maple}{\texttt{Maple}}
\newcommand\REM[1]{\texttt{[#1]}\marginal{XXX}}
\newcommand\urladdrx[1]{\urladdr{\def~{\~{}}#1}}
\newtheorem{thm}{Theorem}[section]
\newtheorem{prop}[thm]{Proposition}
\newtheorem{deff}[thm]{Definition}
\newtheorem{conj}[thm]{Conjecture}
\newcommand{\bconj}{\begin{conj}}
\newcommand{\econj}{\end{conj}}
\newcommand{\bth}{\begin{thm}}
\newcommand{\ethGL}{\end{thm}}
\newcommand{\el}{\end{lem}}
\newcommand{\bdf}{\begin{deff}}
\newcommand{\edf}{\end{deff}}
\newcommand{\bcor}{\begin{cor}}
\newcommand{\ecor}{\end{cor}}
\newcommand{\bprop}{\begin{prop}}
\newcommand{\eprop}{\end{prop}}
\newcommand{\brem}{\begin{rem}}
\newcommand{\erem}{\end{rem}}
\newcommand{\beq}{\begin{equation}}
\newcommand{\eeq}{\end{equation}}
\newcommand{\beqs}{\begin{equation*}}
\newcommand{\eeqs}{\end{equation*}}
\newcommand{\beqn}{\begin{eqnarray}}
\newcommand{\eeqn}{\end{eqnarray}}
\newcommand{\beqns}{\begin{eqnarray*}}
\newcommand{\eeqns}{\end{eqnarray*}}
\newcommand{\ba}{\begin{array}}
\newcommand{\ea}{\end{array}}
\newcommand{\bit}{\begin{itemize}}
\newcommand{\eit}{\end{itemize}}
\newcommand{\ben}{\begin{enumerate}}
\newcommand{\een}{\end{enumerate}}
\newcommand{\bal}{\begin{align}}
\newcommand{\bals}{\begin{align*}}
\newcommand{\bs}{\begin{skip}}
\newcommand{\eal}{\end{align}}
\newcommand{\eals}{\end{align*}}
\newcommand{\es}{\end{skip}}
\newcommand{\babs}{\begin{abstract}}
\newcommand{\eabs}{\end{abstract}}
\newcommand{\lp}{\left (}
\newcommand{\rp}{\right )}
\newcommand{\lb}{\left [}
\newcommand{\rb}{\right ]}
\def\ii{\mathbf{i}}
\newcommand{\al}{\alpha}
\newcommand{\be}{\beta}
\newcommand{\Fi}{\varphi}
\newcommand{\II}{\infty}
\newcommand{\tet}{\theta}
\def\P{{\mathbb {P}}}
\begin{document}
\title[Tail estimates for Brownian areas]
{Tail estimates for the Brownian excursion area and other Brownian areas}

\date{July 4, 2007} 

\author{Svante Janson}
\address{Department of Mathematics, Uppsala University, PO Box 480,
SE-751~06 Uppsala, Sweden}
\email{svante.janson@math.uu.se}
\urladdrx{http://www.math.uu.se/~svante/}

\author{Guy Louchard}
\address{Universit\'e Libre de Bruxelles,
D\'epartement d'Informatique, CP 212, Boulevard du Triomphe, B-1050
Bruxelles, Belgium} 
\email{louchard@ulb.ac.be}
\urladdr{http://www.ulb.ac.be/di/mcs/louchard/}

\subjclass[2000]{60J65} 

\begin{abstract}
Several Brownian areas are considered in this paper: the Brownian
excursion area, the Brownian bridge area, the Brownian motion area,
the Brownian meander area, the Brownian double meander area,  the
positive part of Brownian bridge area,  the positive part of Brownian
motion area. 
We are interested in the asymptotics of the right tail of their
density function. Inverting a double Laplace transform, we can derive,
in a mechanical way, all terms of an asymptotic expansion. We
illustrate our technique with the computation of the first four
terms. We also obtain asymptotics for the right tail of the
distribution function and for the moments. Our main tool is the
two-dimensional saddle point method. 
 \end{abstract}

\maketitle

\section{Introduction}\label{Sintro}

Let $B\ex(t)$, $t\in\oi$, be a (normalized) Brownian excursion, and let
$\cBex\=\intoi B\ex(t)\dd t$ be its area (integral). 
This random variable has been studied by several authors, including
Louchard \cite{Lou:kac,Lou:ex},
\Takacs{} \cite{Takacs:Bernoulli}, and
Flajolet and Louchard \cite{FL:Airy}; see also the survey 
by Janson \cite{SJ201} with many further references.

It is known that $\cBex$ has a density function $f\ex$, which was
given explicitly 
by \Takacs{} \cite{Takacs:Bernoulli} 
as a convergent series involving a confluent
hypergeometric function.
(The existence and continuity of $f\ex$ follows also from \refT{P1} below.)
The series expansion of $f\ex$ readily yields asymptotics of the left tail of
the distribution,
\ie, of $f\ex(x)$ and $\P(\cBex\le x)$ as $x\to0$, see Louchard \cite{Lou:ex} 
and Flajolet and Louchard \cite{FL:Airy}
(with  typos
corrected in \cite{SJ201}). 

The main purpose of this paper is to give corresponding asymptotics
for the right tail of the distribution of $\cBex$,
\ie, for the 
density function $f\ex(x)$ and the
tail probabilities $\P(\cBex>x)$ for large $x$.
We have the following result.

\begin{theorem}\label{Tex}
For the Brownian excursion area,
as $\xtoo$,
  \begin{gather}\label{tex1}
  f\ex(x)\sim \frac{72\sqrt6}{\sqrt\pi} x^2 e^{-6x^2}
\intertext{and}
  \P(\cB\ex>x)\sim \frac{6\sqrt6}{\sqrt\pi} x e^{-6x^2}.
  \end{gather}
More precisely, there exist asymptotic expansions in powers of
$x\qww$, to arbitrary order $N$, as $\xtoo$,
  \begin{gather*}
  f\ex(x)= \frac{72\sqrt6}{\sqrt\pi} x^2 e^{-6x^2}
\lrpar{
1-\frac19\,{x}^{-2}-{\frac {5}{1296}}\,{x}^{-4}-{\frac {25}{46656}}\,{x}^{
-6}+\dots+O \left( {x}^{-2N} \right)
},
\\
  \P(\cB\ex>x)
= \frac{6\sqrt6}{\sqrt\pi} x e^{-6x^2}
\lrpar{
1-\frac1{36}\,{x}^{-2}-{\frac {1}{648}}\,{x}^{-4}-{\frac {7}{46656}}\,{x}^{-
6}+
\dots+
O \left( {x}^{-2N} \right)
}.
  \end{gather*}
\end{theorem}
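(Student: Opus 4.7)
The strategy is to apply the two-dimensional saddle point method to the double Laplace transform representation of $f\ex$ (the representation already used implicitly in \refT{P1} to establish existence and continuity). Concretely, I would start from the joint Laplace transform
\begin{equation*}
\Phi(\lambda,s)=\intoo e^{-\lambda T}\,\E\bigsqpar{e^{-s A_T}}\,\dd T
\end{equation*}
of the unnormalized area $A_T$ of a Brownian excursion of duration $T$, which has a known closed form in terms of the Airy function $\Ai$ (this is the classical formula going back to \cite{Lou:kac,Takacs:Bernoulli,FL:Airy}). A Brownian scaling then relates $\E[e^{-sA_T}]$ for the unnormalized process to the desired $f\ex$ of the normalized one, and inverting both transforms expresses $f\ex(x)$ as a double contour integral.

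Next I would introduce the natural scaling $s=x\,\sigma$, $\lambda=x\qa\,\mu$ dictated by the homogeneity of the Airy argument, so that the integrand takes the form $\exp\bigpar{x^2 g(\sigma,\mu)}\cdot h(\sigma,\mu,x)$ with $g$ and $h$ explicit and $h$ only slowly varying in~$x$. The joint critical equations $g_\sigma=g_\mu=0$ determine a unique saddle point $(\sigma_0,\mu_0)$, and a direct computation (essentially the one already present in the Airy asymptotics) yields $\Re g(\sigma_0,\mu_0)=-6$, which is where the exponential factor $e^{-6x^2}$ comes from. Deforming both contours through $(\sigma_0,\mu_0)$ along the steepest descent directions and applying the standard two-dimensional Laplace formula produces the leading term $\tfrac{72\sqrt6}{\sqrt\pi}\,x^2 e^{-6x^2}$.

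To obtain the expansion to arbitrary order $N$ I would Taylor expand $g$ beyond its quadratic part and $h$ around $(\sigma_0,\mu_0)$, substitute into the Gaussian integral, and collect terms in powers of $x\qww$; each additional order reduces to a finite symbolic computation (routinely handled by \maple), producing the explicit coefficients $-\tfrac19,\ -\tfrac{5}{1296},\ -\tfrac{25}{46656},\dots$. For the tail probability $\P(\cBex>x)$ I would either integrate the expansion of $f\ex$ from $x$ to $\infty$, using $\intoo_x y^{-k}e^{-6y^2}\dd y$ expanded by repeated integration by parts, or re-run the saddle point analysis directly on the Laplace transform of $\P(\cBex>x)=\int_x^\infty f\ex$; this yields the coefficients $-\tfrac1{36},\ -\tfrac1{648},\ -\tfrac7{46656},\dots$.

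The main obstacle will be the rigorous justification of the contour deformations in the presence of the Airy-type singularities and the uniform control of the remainder in the 2D saddle expansion out to order $N$; once the local change of coordinates diagonalizing the Hessian of $g$ at $(\sigma_0,\mu_0)$ is fixed, the extraction of coefficients is mechanical, as advertised in the abstract. The same machinery will apply, with the obvious modifications of the Airy boundary conditions, to the other Brownian areas treated later in the paper.
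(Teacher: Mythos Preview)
Your overall architecture is right --- double Laplace transform, invert, two-dimensional saddle point --- and this is exactly what the paper does. But you are treating the contour deformation as a routine obstacle, and that is where the real content lies. As the introduction states explicitly, a straightforward inversion of the double transform gives a double integral that is not even absolutely convergent, because $\Psi(z)$ only decays like $z^{-\nu}$ at infinity. Ordinary steepest-descent deformation does not fix this: there is no contour on which the raw integrand becomes rapidly decreasing.

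The missing idea is Tolmatz's trick, which the paper packages in \refT{P1} and \refS{SPsi}. One first folds the outer inversion onto a Hankel contour wrapping the negative axis; the contributions from the two sides combine into the specific linear combination
\[
\Hx(z)=e^{2\pi\nu\ii/3}\Psi\bigl(e^{2\pi\ii/3}z\bigr)-e^{-2\pi\nu\ii/3}\Psi\bigl(e^{-2\pi\ii/3}z\bigr).
\]
For the excursion, the Airy reflection identities \eqref{A2pi/3}--\eqref{Wronskian} produce an almost complete cancellation, and $\Hx\ex(z)$ decays \emph{superexponentially} like $e^{-\gamma z^{3/2}}$ in a sector around the positive axis (see \eqref{psixex} and \eqref{psih}). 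Only after this does the integrand become of the form $e^{x^2\phi_1(s,\theta)}F_1(s,\theta;x)$ with genuinely rapid decay away from the saddle, so that the standard 2D saddle method and the tail estimate (\refL{L2}) go through. Without this step you will not be able to justify either the absolute convergence or the localization near the saddle, and your proposal as written would stall precisely at the point you flag as ``the main obstacle.'' The rest of your plan --- Taylor-expand beyond quadratic order, compute Gaussian integrals, integrate by parts for $\P(\cBex>x)$ --- matches the paper exactly.
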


Unlike the left tail, it seems difficult to obtain such results 
from \Takacs's formula for $f\ex$, and we will instead use a method by
Tolmatz
\cite{Tol:br,Tol:bm,Tol:brp} that he used to obtain corresponding
asymptotics for 
three other Brownian areas, \viz, 
the integral 
$\cB\br\=\intoi |B\br(t)|\dd t$
of the absolute value of a Brownian bridge $B\br(t)$,
the integral 
$\cB\bm\=\intoi |B(t)|\dd t$
of the absolute value of a Brownian motion $B(t)$ over $\oi$,
and the integral 
$\cB\brp\=\intoi B\br(t)_+\dd t$
of the positive part of a Brownian bridge.

The (much weaker) fact that 
$-\ln\P(\cB\ex>x)\sim{-6x^2}$, \ie, that
\begin{equation}
  \label{sofie}
\P(\cB\ex>x)=\exp\bigpar{-6x^2+o(x^2)},
\end{equation}
was shown by \citet{CsorgoShiYor}
as a consequence of the asymptotics of the moments $\E\cBex^n$ found
by \Takacs{} \cite{Takacs:Bernoulli}, see 
\refS{Smoments}. It seems difficult to obtain more precise tail
asymptotics from the moment asymptotics.
It is, however, easy to go in the opposite direction and obtain moment
asymptotics from the tail asymptotics above, as was done by Tolmatz
\cite{Tol:bm,Tol:brp}  for $\cB\bm$, $\cB\br$ and $\cB\brp$; see again
\refS{Smoments}. In particular, this made it possible to guess the
asymptotic formula \eqref{tex1} before we could prove it, by
matching the resulting moment asymptotics with the known 
result by \Takacs{} \cite{Takacs:Bernoulli}.

An alternative way to obtain \eqref{sofie} is by large deviation
theory, which easily gives \eqref{sofie} and explains the constant 6
as the result of an optimization problem, 
see Fill and Janson \cite{SJ197}. This method applies to the other
Brownian areas in this paper too, 
and explains the different constants in the exponents below,
but, again, it seems difficult to obtain more
precise results by this approach.

Besides the Brownian excursion area and the three areas studied by
Tolmatz, his method applies also to three further Brownian areas:
the integrals
$\cB\me\=\intoi |B\me(t)|\dd t$,
$\cB\dm\=\intoi |B\dm(t)|\dd t$
and 
$\cB\bmp\=\intoi B(t)_+\dd t$
of a Brownian meander $B\me(t)$,
a Brownian double meander $B\dm(t)$, and
the positive part of a Brownian motion
over $\oi$. 
We define here the Brownian double meander by
$B\dm(t)\=B(t)-\min_{0\le u\le 1}B(u)$;
this is a non-negative continuous stochastic process on $\oi$ that
\as{} is 0 at a unique point $\tau\in\oi$, and it can be regarded as
two Brownian meanders on the intervals 
$[0,\tau]$ and $[\tau,1]$ joined back to back (with the first one
reversed),
see \citet{MC:Airy} and \citet{SJ201}; the other processes considered
here are well-known, 
see for example \citet{RY}.

We find it illuminating to study all
seven Brownian areas together, and we will therefore formulate our
proof in a general form that applies to all seven areas.
As a result we obtain also the following results, where we for
completeness
repeat Tolmatz's results.
(We also extend them, since Tolmatz \cite{Tol:br,Tol:bm,Tol:brp}
gives only the leading terms, but
he points out that higher order terms can be obtained in the same way.)
We give the four first terms in the asymptotic expansions; they can
(in principle, at least)
be continued to any desired number of terms by the methods
in \refS{Shigher}; only even powers $x^{-2k}$ appear in the expansions.

\begin{theorem}[Tolmatz \cite{Tol:br}]
\label{Tbr}
For the Brownian bridge area,
as $\xtoo$,
  \begin{align*}
  f\br(x)&= \frac{2\sqrt6}{\sqrt\pi}\, e^{-6x^2}
\lrpar{
1+\frac1{18}\,{x}^{-2}+{\frac {1}{432}}\,{x}^{-4}
+{\frac {25}{46656}}\,{x}^{-6}
+O \left( {x}^{-8} \right)
},
\\
  \P(\cB\br>x)
&= \frac{1}{\sqrt{6\pi}}\, x\qw e^{-6x^2}
\lrpar{
1-\frac1{36}\,{x}^{-2}+{\frac {1}{108}}\,{x}^{-4}
-{\frac {155}{46656}}\,{x}^{-6}
+O \left( {x}^{-8} \right)
}.
  \end{align*}
\end{theorem}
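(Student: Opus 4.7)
The plan is to follow Tolmatz~\cite{Tol:br} and apply the two-dimensional saddle point method to a double Laplace transform representation of $f\br(x)$. For the Brownian bridge area, the Feynman--Kac formula with potential $|y|$ yields an explicit double Laplace transform (in time $t$ and area) expressible in terms of the Airy function $\Ai$ and its derivative. Combining this with the standard relation between the unconditioned area over $[0,t]$ and the area under a Brownian bridge on $[0,1]$, one extracts a double contour integral representation
\begin{equation*}
f\br(x) = \frac{1}{(2\pi\ii)^2}\iint_{\Gamma_s\times\Gamma_\lambda} e^{sx+\lambda}\,G(s,\lambda)\,d\lambda\,ds,
\end{equation*}
where $G$ is a meromorphic kernel built from $\Ai$ and $\Ai'$ evaluated at $(2s)\qqaw\lambda$, and $\Gamma_s,\Gamma_\lambda$ are vertical lines in suitable half-planes.

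To analyze the asymptotics, I rescale $s=x\sigma$, $\lambda=x^2\mu$. Using the leading asymptotic $\Ai(z)\sim(2\sqrt{\pi})\qw z^{-1/4}\exp(-\tfrac{2}{3}z^{3/2})$ as $|z|\to\infty$ in a suitable sector, the integrand takes the form $\exp(x^2\Phi(\sigma,\mu))$ times slowly varying factors, with an explicit phase $\Phi$. The equations $\partial_\sigma\Phi=\partial_\mu\Phi=0$ have a unique saddle point $(\sigma_*,\mu_*)$, and at this point $\Phi(\sigma_*,\mu_*)=-6$, which produces the exponential factor $e^{-6x^2}$; two-dimensional Gaussian integration against the Hessian of $\Phi$ along the steepest-descent contours through $(\sigma_*,\mu_*)$ gives the prefactor $2\sqrt{6}/\sqrt{\pi}$. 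The higher-order coefficients $1/18$, $1/432$, $25/46656$ come from substituting the full asymptotic expansion $\Ai(z)\sim(2\sqrt{\pi})\qw z^{-1/4}e^{-\zeta}\sum_{k\ge 0}c_k\zeta^{-k}$ (with $\zeta=\tfrac{2}{3}z^{3/2}$), together with the corresponding expansion of $\Ai'$, and expanding the non-Gaussian part of the integrand as a series in $x^{-2}$ around the saddle; the resulting polynomial-times-Gaussian integrals reduce to even Hermite moments, from which the rational coefficients can be read off term by term.

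The tail-probability expansion then follows by integration. Iterated integration by parts gives
\begin{equation*}
\int_x^\infty y^{-2k}e^{-6y^2}\,dy=\frac{e^{-6x^2}}{12x^{2k+1}}\Bigl(1-\frac{2k+1}{12x^2}+\cdots\Bigr),
\end{equation*}
and combining this with the expansion of $f\br$ yields the coefficients $-1/36$, $1/108$, $-155/46656$ of the asymptotic expansion for $\P(\cB\br>x)$.

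The main obstacle is analytic rather than algebraic: I must justify deforming $\Gamma_s,\Gamma_\lambda$ onto steepest-descent paths through $(\sigma_*,\mu_*)$ without crossing poles or branch cuts of $G$, and must obtain uniform bounds on the integrand off the saddle that show the tails contribute below any prescribed order $x^{-2N}$. In particular, the Airy asymptotics are only valid in a sector, so near the turning points one must match them with oscillatory representations or use alternative integral representations for $\Ai$. Once these estimates are in place, the generation of coefficients is mechanical, and the general framework developed earlier in the paper provides it uniformly; the specific constants for the bridge follow from specializing the generic computation to the kernel $G$ associated with the Brownian bridge area.
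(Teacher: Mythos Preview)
Your proposal is essentially the paper's approach (Tolmatz's method): double Laplace inversion of an Airy-function kernel, followed by a two-variable saddle point analysis; higher corrections from the full Airy asymptotic series; tail probability by repeated integration by parts. In particular, your derivation of the distribution-function expansion from that of the density is exactly what the paper does in \refS{Stails}.

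One point deserves sharpening. You describe the analytic obstacle as ``deforming $\Gamma_s,\Gamma_\lambda$ onto steepest-descent paths'' and controlling tails. The paper's resolution is more specific than a generic steepest-descent deformation: the naive inversion integral is not absolutely convergent, and a direct saddle-point argument on vertical lines does not go through. Instead, one first deforms the inner contour to a Hankel contour around $(-\infty,0]$, which replaces the slowly decaying $\Psi\br(z)=-\Ai(z)/\Ai'(z)$ by the combination
\[
\Psi^*\br(z)=e^{\pi\ii/3}\Psi\br(e^{2\pi\ii/3}z)-e^{-\pi\ii/3}\Psi\br(e^{-2\pi\ii/3}z)
=\frac{2\pi^{-1}\ii}{\Ai'(z)^2+\Bi'(z)^2},
\]
which decays superexponentially in a sector about the positive real axis because of near-complete cancellation (the Wronskian identity \eqref{Wronskian} is what produces it). This is the step that converts the problem into an absolutely convergent real double integral over $(r,\theta)\in(0,\infty)\times(-\pi/2,\pi/2)$, to which the saddle point method then applies cleanly (\refT{P1} and \refS{Ssaddle}). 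Your sketch would benefit from naming this cancellation mechanism explicitly rather than deferring it to ``uniform bounds off the saddle.''
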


\begin{theorem}[Tolmatz \cite{Tol:bm}]
\label{Tbm}
For the Brownian motion area,
as $\xtoo$,
  \begin{align*}
  f\bm(x)&= \frac{\sqrt6}{\sqrt\pi}\, e^{-3x^2/2}
\lrpar{
1+\frac 1{18}\,{x}^{-2}-{\frac {1}{162}}\,{x}^{-4}
+{\frac {49}{5832}}\,{x}^{-6}
+O \left( {x}^{-8} \right) 
},
\\
  \P(\cB\bm>x)&= \frac{\sqrt2}{\sqrt{3\pi}}\, x\qw e^{-3x^2/2}
\lrpar{
1-{\frac {5}{18}}\,{x}^{-2}+{\frac {22}{81}}\,{x}^{-4}
-{\frac {2591}{5832}}\,{x}^{-6}
+O \left( {x}^{-8} \right) 
}.
  \end{align*}
\end{theorem}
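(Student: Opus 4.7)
The plan is to invert the double Laplace transform of $\cBbm$ by the two-dimensional saddle point method, in the unified form the paper develops for all seven Brownian areas (and which recovers Tolmatz's three as special cases).

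First, I start from the double Laplace transform
\[
\Phi\bm(s,\mu) \= \intoo e^{-sT}\,\E\Bigl[\exp\Bigl(-\mu\int_0^T|B(u)|\,\dd u\Bigr)\Bigr]\,\dd T,
\]
which, by the Feynman--Kac formula applied to the generator perturbed by the potential $\mu|x|$, admits a closed form as a rational expression in the Airy functions $\Ai,\Ai',\Bi,\Bi'$ evaluated at simple algebraic combinations of $s$ and $\mu$. A standard double Bromwich inversion then yields
\[
f\bm(x) = \frac{1}{(2\pi\ii)^2}\int_{\Gamma_s}\int_{\Gamma_\mu}e^{\,s+\mu x}\,\Phi\bm(s,\mu)\,\dd\mu\,\dd s,
\]
with both contours vertical lines to the right of every singularity of $\Phi\bm$.

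Second, I rescale $s=x^2\sigma$ and $\mu=x\nu$ (the scaling dictated by $B(ct)\eqd\sqrt c\,B(t)$, which gives $\int_0^T|B|\sim T^{3/2}$, so both $\sigma$ and $\nu$ are $O(1)$ at the relevant scale as $x$ is large). Replacing the Airy functions by their $\ztoo$ asymptotic expansion turns the integrand into $x^3\,e^{x^2\Psi(\sigma,\nu)}\,A(\sigma,\nu)\bigl(1+O(x^{-2})\bigr)$ for explicit $\Psi$ and $A$ independent of $x$. The critical equations $\nabla\Psi(\sigma_0,\nu_0)=0$ have a unique real positive solution, and a direct computation gives $\Psi(\sigma_0,\nu_0)=-3/2$, which explains the leading exponential $e^{-3x^2/2}$. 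Evaluating the Gaussian prescribed by the Hessian of $\Psi$ at $(\sigma_0,\nu_0)$, combined with $A(\sigma_0,\nu_0)$, produces the leading constant $\sqrt6/\sqrt\pi$.

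Third, I obtain the four-term expansion by the standard saddle point expansion of a double integral. After a linear change of variables that diagonalizes the Hessian, I Taylor expand the phase beyond the quadratic part and the amplitude around the saddle, then integrate term by term against the Gaussian weight. Only even powers of $x\qw$ survive, by the parity of the rescaled integrand around its saddle, which is why the expansion proceeds in $x\qww$. The derivatives of $A$ at $(\sigma_0,\nu_0)$ are computed from the Airy identities $\Ai''(z)=z\Ai(z)$, $\Bi''(z)=z\Bi(z)$, which reduce all higher derivatives to polynomials in $z$ times $\Ai,\Ai',\Bi,\Bi'$. Carrying the expansion to order $x^{-6}$ gives the claimed coefficients $1,\ \tfrac1{18},\ -\tfrac1{162},\ \tfrac{49}{5832}$ for $f\bm$.

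Finally, for $\P(\cBbm>x)$ I integrate the Bromwich representation in $x$, which amounts to inserting $\mu\qw$ in the $\mu$-integrand. The saddle point is unchanged, but the extra factor $(\nu_0 x)\qw$ produces the prefactor $\sqrtt/\sqrt{3\pi}\,x\qw e^{-3x^2/2}$, and the same local expansion, performed with the new amplitude, delivers the modified coefficients $-\tfrac{5}{18},\ \tfrac{22}{81},\ -\tfrac{2591}{5832}$. I expect the main obstacle to be the mechanical but delicate bookkeeping of the high-order Taylor coefficients of the Airy quotient at the saddle; in practice this is handled with a computer algebra system such as \maple. A subsidiary but purely analytic point is the justification that the Bromwich contours may be deformed through $(\sigma_0,\nu_0)$ without crossing singularities of $\Phi\bm$ and that the contour tails contribute errors exponentially smaller than every term of the expansion; this will follow from the general contour setup shared with the proof of \refT{Tex}.
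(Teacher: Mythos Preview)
Your overall plan---double Laplace inversion followed by a two-dimensional saddle point---matches the paper's strategy, but you have inverted the difficulty ranking. What you call the ``subsidiary but purely analytic point'' (contour deformation and tail control) is in fact the central technical contribution of Tolmatz's method and of the paper's \refT{P1}. A naive double Bromwich inversion on vertical lines yields an integral that is not even absolutely convergent (the paper says this explicitly in \refS{Sintro}), and simply replacing the Airy functions by their large-$|z|$ asymptotics is illegitimate because those asymptotics hold only in restricted sectors, which a vertical line eventually leaves. The key idea you are missing is the deformation of the inner contour to a Hankel contour around $(-\infty,0]$; the two sides of the cut then combine, via the Airy identity $\Ai(ze^{\pm 2\pi\ii/3})=\tfrac12 e^{\pm\pi\ii/3}(\Ai(z)\mp\ii\Bi(z))$ and the definition \eqref{Hx}, into a function $\Hx\bm$ that decays \emph{superexponentially} (see \eqref{psixbm}) rather than polynomially. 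It is this near-cancellation, not a routine contour shift, that makes the tails negligible (\refL{L2}) and legitimizes the local saddle-point expansion; without it your step ``turns the integrand into $x^3e^{x^2\Psi(\sigma,\nu)}A(\sigma,\nu)(1+O(x^{-2}))$'' cannot be justified on the whole contour.

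A smaller point: your derivation of $\P(\cB\bm>x)$ by inserting $\mu\qw$ and re-running the saddle point is more work than needed. The paper (\refS{Stails}) obtains the tail expansion directly from that of $f\bm$ by repeated integration by parts, noting that an expansion of the form \eqref{fasx} mechanically yields one of the form \eqref{Fasx} whose coefficients are recovered by formal differentiation.
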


\begin{theorem}\label{Tme}
For the Brownian meander area,
as $\xtoo$,
  \begin{align*}
  f\me(x)&= 3\sqrt3\, x e^{-3x^2/2}
\lrpar{
1-\frac{1}{18}\,{x}^{-2}-{\frac {1}{162}}\,{x}^{-4}
+{\frac {5}{5832}}\,{x}^{-6}
+O \left( {x}^{-8} \right) 
},
\\
  \P(\cB\me>x)&= \sqrt{3}\,e^{-3x^2/2}
\lrpar{
1-\frac{1}{18}\,{x}^{-2}+{\frac {5}{162}}\,{x}^{-4}
-{\frac {235}{5832}}\,{x}^{-6}
+O \left( {x}^{-8} \right) 
}.
  \end{align*}
\end{theorem}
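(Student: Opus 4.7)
The plan is to follow the unified saddle-point scheme that the paper applies to all seven Brownian areas (extending Tolmatz \cite{Tol:br,Tol:bm,Tol:brp}), specialized here to the meander. The starting point will be a double Laplace transform
\begin{equation*}
\Phi\me(\lambda,\mu) \= \E\exp\bigpar{-\lambda\cB\me - \mu B\me(1)},
\end{equation*}
obtained via the Feynman--Kac formula for Brownian motion conditioned to stay positive on $\oi$. The resulting expression involves the fundamental solutions of the Airy equation $u''(t)=\lambda t\,u(t)$, and after conditioning on the endpoint and positivity, $\Phi\me$ is given explicitly in terms of $\Ai$ and $\Ai'$ evaluated at scaled arguments; I would either derive this directly or quote it from \cite{SJ201,MC:Airy}.

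Next I would recover $f\me(x)$ by two-dimensional Laplace inversion. Writing
\begin{equation*}
f\me(x) = \frac{1}{(2\pi\ii)^{2}} \int\!\!\int e^{\lambda x + \mu y}\, \Phi\me(\lambda,\mu)\,\rho(y)\,\dd y\,\dd\mu\,\dd\lambda
\end{equation*}
for an appropriate endpoint-marginalization density $\rho$, and rescaling $\lambda$ and $\mu$ by suitable positive powers of $x$, the integrand takes the form $e^{x^{2}H(z,w)}A(z,w)$ times slower-varying factors. The two-dimensional saddle equations $\partial_zH=\partial_wH=0$ should possess a unique critical point $(z^*,w^*)$ on the positive real locus at which $H(z^*,w^*)=-3/2$, which produces the exponential factor $e^{-3x^{2}/2}$ in the theorem. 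The Gaussian evaluation, namely $A(z^*,w^*)\cdot 2\pi/\bigpar{x^{2}\sqrt{\det(-H'')}}$, should yield the leading prefactor $3\sqrt{3}\,x$.

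Higher-order corrections in $x^{-2}$ then follow by expanding $H$ and $A$ to higher order in $(z-z^*,w-w^*)$ and integrating against the Gaussian kernel, as formalized in \refS{Shigher}; this mechanical expansion produces the coefficients $-1/18$, $-1/162$, $5/5832$ for the density, and the corresponding coefficients $-1/18$, $5/162$, $-235/5832$ for $\P(\cB\me>x)$ follow by integrating the density expansion from $x$ to infinity, which is equivalent to repeated integration by parts of $e^{-3y^{2}/2}$ against polynomials in $y^{-1}$. Only even powers $x^{-2k}$ appear because the natural small parameter is $x^{-1}$, but odd Gaussian moments cancel against odd-order Taylor remainders of $H$, exactly as in Theorems \ref{Tex}--\ref{Tbm}.

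The main obstacle I anticipate is the rigorous justification of the saddle-point manipulation in the meander setting, since $\Phi\me$ involves a ratio with $\Ai$-type quantities in the denominator. I must verify that the inversion contour can be deformed through $(z^*,w^*)$ without crossing zeros of the denominator or branch cuts, check the non-degeneracy and correct signature of the Hessian of $H$ at the saddle, and show that the contour tails contribute terms of order $e^{-(3/2+\delta)x^{2}}$ for some $\delta>0$. Once these analytic points are settled, the rest of the argument is a bookkeeping exercise parallel to the derivation of \refT{Tex}, carried out symbolically with \maple{}.
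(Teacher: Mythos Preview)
Your overall shape is right---two-dimensional saddle point, mechanical higher-order expansion, integration by parts for the tail---but you have misidentified what the paper's ``double Laplace transform'' is, and the inversion formula you wrote is not well-defined.

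The transform in \refT{P1} is \emph{not} a joint transform of $(\cB\me,B\me(1))$. It is an iterated transform of the \emph{single} variable $\cB\me$: first $\psi(s)=\E e^{-s\cB\me}$, then
\[
\Psi\me(x)=\frac{1}{\Gamma(\nu)}\intoo e^{-xs}\psi(s\qqc)s^{\nu-1}\dd s,
\qquad \nu=\tfrac12,
\]
which for the meander equals $\AI(z)/\Ai(z)$, see \eqref{Hme}. The second ``Laplace variable'' is the $x$ in this outer integral, not a conjugate to the endpoint $B\me(1)$; the endpoint plays no role anywhere. Your formula
\[
f\me(x)=\frac{1}{(2\pi\ii)^2}\int\!\!\int e^{\lambda x+\mu y}\Phi\me(\lambda,\mu)\rho(y)\dd y\dd\mu\dd\lambda
\]
mixes a two-variable inversion with an extra $y$-integral and an unspecified weight $\rho$; if $\Phi\me$ is the joint Laplace transform then two-dimensional inversion already gives the joint density, and marginalizing in $y$ means $\rho\equiv1$ and $\mu=0$, collapsing everything to the one-dimensional inversion of $\psi$---which is exactly the integral that is not absolutely convergent and that Tolmatz's trick is designed to circumvent.

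The actual mechanism is: form $\Hx\me(z)$ from $\Psi\me$ via \eqref{Hx}, observe the near-cancellation \eqref{Hxme} so that $\Hx\me$ decays like $1/\Bi(z)$, and only then write the density as the absolutely convergent double integral \eqref{ff}. The saddle point lies at $(s_0,0)=(9/4,0)$ with $\gam\me=2/3$, giving $\phi_1(s_0,0)=-3/4$ and hence $e^{-3x^2/4}$ for $\sqrt2\,\cB\me$, i.e.\ $e^{-3x^2/2}$ for $\cB\me$; the prefactor $3\sqrt3\,x$ comes from $\hh\me(z)=2\pi\qq z\qqqq$ in \eqref{hme} and the Gaussian determinant in \eqref{b2}. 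None of this uses $B\me(1)$. Once you align with this setup, your description of the higher-order expansion and the passage to $\P(\cB\me>x)$ is correct.
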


\begin{theorem}\label{Tdm}
For the Brownian double meander area,
as $\xtoo$,
  \begin{align*}
  f\dm(x)&= 
\frac{2\sqrt6}{\sqrt\pi}\, e^{-3x^2/2}
\lrpar{
1+\frac 1{6}\,{x}^{-2}+{\frac {1}{18}}\,{x}^{-4}
+{\frac {29}{648}}\,{x}^{-6}
+O \left( {x}^{-8} \right) 
},
\\
  \P(\cB\dm>x)&= 
\frac{2\sqrt2}{\sqrt{3\pi}}\, x\qw e^{-3x^2/2}
\lrpar{
1-{\frac {1}{6}}\,{x}^{-2}+{\frac {2}{9}}\,{x}^{-4}
-{\frac {211}{648}}\,{x}^{-6}
+O \left( {x}^{-8} \right) 
}.
  \end{align*}
\end{theorem}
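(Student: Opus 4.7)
The plan is to reduce \refT{Tdm} to the Brownian meander case, \refT{Tme}, by exploiting the decomposition of $B\dm$ at its minimum, and then to apply the double Laplace transform / two-dimensional saddle point inversion developed in the earlier sections of the paper.

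First I would record the identity, recalled in the introduction, that conditionally on the \as{} unique location $\tau\in\oi$ of the minimum of $B$, the two pieces of $B\dm$ on $[0,\tau]$ (time reversed) and $[\tau,1]$ are two independent Brownian meanders of durations $\tau$ and $1-\tau$. Combined with Brownian scaling (a meander of duration $L$ has area equal in distribution to $L^{3/2}$ times a standard meander area), this yields the in-law identity
\begin{equation*}
  \cB\dm \eqd \tau^{3/2}\cB\me^{(1)}+(1-\tau)^{3/2}\cB\me^{(2)},
\end{equation*}
with $\cB\me^{(1)},\cB\me^{(2)}$ independent copies of $\cB\me$, also independent of $\tau$. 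This translates into a representation of the double Laplace transform of $f\dm$ as an integral over $\tau$ of a product of two meander double Laplace transforms, which is precisely the form to which the paper's general framework applies.

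Second, a large-deviations heuristic makes the saddle structure transparent: minimizing $\tfrac{3}{2}\bigpar{x_1^2/\tau^3+x_2^2/(1-\tau)^3}$ under $x_1+x_2=x$ yields $3x^2/\bigpar{2(\tau^3+(1-\tau)^3)}$, and $\tau^3+(1-\tau)^3$ is maximized on $\oi$ at the boundary $\tau\in\set{0,1}$, with value $1$. Hence the tail exponent is $-3x^2/2$, matching the meander exponent: the cheapest way to make $\cB\dm$ large is to let one of the two meanders carry essentially all of the area while the other remains typical, and by symmetry the two boundary saddles contribute equally. To read off the leading prefactor I would rescale $\tau\to\tau/x^2$ near $\tau=0$; combined with the density of the location of the minimum, which has an inverse-square-root singularity at $0$, the $\tau$-integration becomes a Laplace integral of the form $\int_0^\infty\tau^{-1/2}e^{-9x^2\tau/2}\,d\tau=\sqrt{2\pi/(9x^2)}$, producing the extra factor of $x\qw$ that distinguishes $\P(\cB\dm>x)\sim(2\sqrt2/\sqrt{3\pi})x\qw e^{-3x^2/2}$ from the pure meander tail of \refT{Tme}. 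Matching the remaining constants against \refT{Tme} recovers the leading coefficients $2\sqrt6/\sqrt\pi$ for $f\dm$ and $2\sqrt2/\sqrt{3\pi}$ for $\P(\cB\dm>x)$.

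The main obstacle is to extend this analysis systematically through four orders. As in \refS{Shigher}, each further term in the saddle expansion contributes an $O(x\qww)$ correction; the bookkeeping involves combining the higher-order Airy-type expansion of the meander double Laplace transform with the Taylor expansion of $1/(\tau^3+(1-\tau)^3)$ and of the boundary density about $\tau=0$, and collecting all cross-terms modulo $O(x^{-2N})$. Executing this mechanical but lengthy calculation yields the coefficients $1,\,1/6,\,1/18,\,29/648$ for $f\dm$ and the corresponding $1,\,-1/6,\,2/9,\,-211/648$ for the tail, obtained either by termwise integration of the density expansion or by applying the saddle method directly to the Laplace transform of $\P(\cB\dm>x)$.
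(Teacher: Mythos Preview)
Your route is genuinely different from the paper's. The paper does \emph{not} reduce the double meander to the meander; it treats all seven areas uniformly by plugging the known double Laplace transform $\Psi\dm(z)=(\AI(z)/\Ai(z))^2$ directly into the inversion formula \refT{P1}, computing $\Hx\dm$ via the Airy rotation identities (see \eqref{Hxdm} and \eqref{psixdm}), and running the two-dimensional saddle point method of \refS{Ssaddle}--\refS{Shigher}. Your decomposition $\cB\dm\eqd\tau^{3/2}\cB\me^{(1)}+(1-\tau)^{3/2}\cB\me^{(2)}$ with $\tau$ arcsine, followed by a boundary Laplace analysis in $\tau$, is an alternative probabilistic approach; it is attractive because it \emph{explains} why the exponent equals the meander exponent and why the leading constant is exactly twice a meander-type contribution (two symmetric boundary saddles at $\tau\in\{0,1\}$). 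Your leading-term computation is correct: integrating the arcsine density $\pi^{-1}\tau^{-1/2}$ against $f\me(x(1-\tau)^{-3/2})$ near $\tau=0$ and doubling by symmetry does give $2\sqrt6/\sqrt\pi\,e^{-3x^2/2}$.

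There is, however, a real gap at higher orders. In the boundary layer $\tau\asymp x^{-2}$, the short meander contributes an area $\tau^{3/2}\cB\me^{(1)}\asymp x^{-3}$, which shifts the argument of the long-meander density by an amount that, multiplied by $\partial_x\log f\me(x)\sim-3x$, produces a correction of order $x^{-2}$ depending on $\E\cB\me$; at the next order one sees $\E\cB\me^2$, and so on. These convolution corrections enter at the \emph{same} order as the Taylor corrections of $(1-\tau)^{-3}$ and of the arcsine density that you list, so they cannot be omitted from the bookkeeping, yet your outline does not mention them. Your third paragraph also conflates two different machines: ``the higher-order Airy-type expansion of the meander double Laplace transform'' belongs to the paper's $\Hx$--saddle framework, whereas your $\tau$-integration uses the meander \emph{density} asymptotics from \refT{Tme}; you need to commit to one of these and carry it through. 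In short, the strategy is sound for the first term and could in principle be pushed further, but as written it does not deliver the coefficients $1/6,\,1/18,\,29/648$; the paper obtains them mechanically from $h^1\dm=\gb_{-1}\gb_0^{-2}$ and the saddle expansion without any such convolution analysis.
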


\begin{theorem}[Tolmatz \cite{Tol:brp}]
\label{Tbrp}
For the positive part of Brownian bridge area,
as $\xtoo$,
  \begin{align*}
  f\brp(x)&= \frac{\sqrt6}{\sqrt\pi}\, e^{-6x^2}
\lrpar{
1+\frac1{36}\,{x}^{-2}-{\frac {7}{5184}}\,{x}^{-4}
+{\frac {17}{46656}}\,{x}^{-6}
+O \left( {x}^{-8}
 \right) 
},
\\
  \P(\cB\brp>x)&= \frac{1}{2\sqrt{6\pi}}\, x\qw e^{-6x^2}
\lrpar{
1-\frac{1}{18}\,{x}^{-2}+{\frac {65}{5184}}\,{x}^{-4}
-{\frac {907}{186624}}\,{x}^{-6}
+O \left( {x}^{-8}
 \right) 
}.
  \end{align*}
\end{theorem}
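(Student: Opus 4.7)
The plan is to follow the general framework developed in the paper, which extends Tolmatz's saddle-point method uniformly to all seven Brownian areas; the specific case $\cB\brp$ was in fact the original setting of Tolmatz \cite{Tol:brp}. I start from the explicit double Laplace transform
\begin{equation*}
\int_0^\infty e^{-\mu t}\, \E \exp\Bigpar{-\lambda \intot B(s)_+\, ds}\, dt,
\end{equation*}
which is classical and admits a closed form in terms of the Airy functions $\Ai$ and $\Bi$. Conditioning the Brownian motion at $t=1$ to hit zero is the standard route to the Brownian bridge, and inverting the two Laplace transforms (in $\mu$ to fix the time horizon $t=1$ and in $\lambda$ to recover the density in $x$) represents $f\brp(x)$ and, after one more integration, $\P(\cB\brp>x)$ as a two-dimensional Bromwich contour integral.

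Next I would rescale $\lambda = x\tilde\lambda$ and $\mu = x^2\tilde\mu$, so that the exponent scales as $x^2$ and the integrand takes the form $e^{x^2\Phi(\tilde\lambda,\tilde\mu)}\, G(\tilde\lambda,\tilde\mu)$, with $\Phi$ and $G$ read off from the large-argument asymptotics of $\Ai$ and $\Bi$ in the appropriate sector. Solving $\nabla\Phi = 0$ produces a unique relevant saddle point at which $\Phi = -6$, which gives the $e^{-6x^2}$ factor; the leading constant $\frac{\sqrt 6}{\sqrt \pi}$ then drops out of the two-dimensional Gaussian integral attached to the Hessian of $\Phi$ at the saddle, combined with the value of $G$ and the Jacobian of the rescaling. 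Higher-order corrections follow mechanically by Taylor expanding $\Phi$ beyond the quadratic term and $G$ to matching order, computing the resulting Gaussian moments, and collecting powers of $x^{-2}$; odd powers of $x^{-1}$ drop by symmetry. This yields the coefficients $\tfrac{1}{36}$, $-\tfrac{7}{5184}$, $\tfrac{17}{46656}$ in the expansion of $f\brp(x)$, and the matching coefficients in $\P(\cB\brp>x)$ follow by termwise integration of $\int_x^\infty y^{-2k} e^{-6y^2}\, dy$ via its elementary $x^{-2}$ expansion.

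The main technical obstacle, as already in Tolmatz's original treatment, is the analytic justification of this formal saddle point calculation. One must show that the original pair of Bromwich contours can be deformed to pass through the saddle in the correct steepest-descent directions without crossing any zeros of the Airy combinations in the denominator of $G$, and that the parts of the deformed contour lying at positive distance from the saddle contribute only $O(e^{-(6+\delta)x^2})$. Both steps rest on sharp uniform estimates for $\Ai$ and $\Bi$ in complex sectors, which form the technical backbone of the unified proof developed earlier in the paper; once these are in place, the expansion above is essentially a routine \maple{} calculation.
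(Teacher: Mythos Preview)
Your proposal gestures at the right overall structure---double Laplace inversion followed by a two-dimensional saddle-point analysis---but the description does not match the paper's actual mechanism, and as written it would run into precisely the obstruction the paper warns about in the introduction. The paper does \emph{not} invert a generic double Laplace transform in $(\mu,\lambda)$ with a rescaling $\lambda=x\tilde\lambda$, $\mu=x^2\tilde\mu$. Instead it starts from the specific one-variable transform \eqref{a1} with $\nu=1/2$ and the Perman--Wellner formula \eqref{Hbrp} for $\Psi\brp$, and the decisive step is the construction of
\[
\Hx\brp(z)=e^{\pi\ii/3}\Psi\brp\bigl(e^{2\pi\ii/3}z\bigr)-e^{-\pi\ii/3}\Psi\brp\bigl(e^{-2\pi\ii/3}z\bigr),
\]
which by near-complete cancellation decays superexponentially in a sector about the positive real axis; see \eqref{Hxbrp} and \eqref{psixbrp}. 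This cancellation, arising from the Hankel-contour deformation encoded in \refT{P1}, is exactly what converts an integral that is ``not even absolutely convergent'' into one amenable to saddle-point analysis via formula \eqref{ff}. Your sketch omits this entirely, and a direct Bromwich inversion in $(\mu,\lambda)$ would not produce an integrand with the required decay.

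Concretely, the paper's saddle-point variables are $(s,\theta)$ with the scaling $\xi=\rho x^{8/3}$, $r=x^{4/3}s^{2/3}$ (not powers of $x$ and $x^2$), and the saddle-point value is $\phi_1(s_0,0)=-3$ for $\sqrt2\,\cB\brp$, becoming $-6$ for $\cB\brp$ after undoing the $\sqrt2$ normalization. The leading constant comes from \eqref{fxbrp}, and the higher-order coefficients from inserting $\hhh\brp(z)=\bigl((\gb_0(z)+\gb_1(z))/2\bigr)^{-2}$ into the machinery of \refS{Shigher}. So your outline is correct at the level of ``saddle point on a double inverse transform,'' but to turn it into a proof you must invoke \refT{P1} and the $\Hx$ construction explicitly rather than a direct two-variable Bromwich inversion.
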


\begin{theorem}\label{Tbmp}
For the positive part of Brownian motion area,
as $\xtoo$,
  \begin{align*}
  f\bmp(x)&= \frac{\sqrt3}{\sqrt{2\pi}} \,e^{-3x^2/2}
\lrpar{
1+\frac1{36}\,{x}^{-2}-{\frac {5}{648}}\,{x}^{-4}
+{\frac {109}{15552}}\,{x}^{-6}
+O \left( {x}^{-8} \right) 
},
\\
  \P(\cB\bmp>x)&= \frac{1}{\sqrt{6\pi}}\, x\qw e^{-3x^2/2}
\lrpar{
1-{\frac {11}{36}}\,{x}^{-2}+{\frac {193}{648}}\,{x}^{-4}
-{\frac {2537}{5184}}\,{x}^{-6}
+O \left( {x}^{-8} \right) 
}.
  \end{align*}
\end{theorem}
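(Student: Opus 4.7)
The plan is to reduce Theorem \ref{Tbmp} to the general saddle-point machinery already established for the other six areas. The starting point is the double Laplace transform
\begin{equation*}
\Phi\bmp(s, \lambda) \= \int_0^\infty e^{-st}\, \E\bigsqpar{\exp\bigpar{-\lambda \textstyle\int_0^t B(u)_+\, du}}\, dt,
\end{equation*}
which I would compute by Feynman--Kac: the function $u(t,x) \= \E\bigsqpar{\exp\bigpar{-\lambda \int_0^t B(s)_+\, ds} \bigm| B(0)=x}$ solves $\partial_t u = \half \partial_{xx} u - \lambda x_+ u$ with $u(0,x) = 1$, so its Laplace transform in $t$ satisfies an Airy equation on $(0,\infty)$ and a constant-coefficient linear ODE on $(-\infty,0)$. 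Matching $u$ and $u'$ at $x = 0$ and requiring decay (resp.\ boundedness) as $x\to+\infty$ (resp.\ $x\to-\infty$) yields a closed-form expression involving $\Ai$ on the positive side and $e^{-\sqrt{2s}\,x}$ on the negative side.

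With $\Phi\bmp(s,\lambda)$ in hand, I would apply inverse Laplace transformation in both variables to represent $f\bmp(x)$ as a double contour integral in $(s,\lambda)$, exactly as in the general treatment in the preceding sections. Rescaling $s$ and $\lambda$ so that $x$ emerges as a large parameter in the exponent, the dominant saddle produces the exponential factor $e^{-3x^2/2}$ --- the same exponent as for $\cB\bm$, as predicted by the large-deviation argument of Fill and Janson \cite{SJ197}. The leading prefactor $\sqrt3/\sqrt{2\pi}$ for $f\bmp$ follows from the standard Gaussian fluctuation around the saddle, while the corresponding $1/\sqrt{6\pi}$ for $\P(\cB\bmp>x)$ comes from the extra $s\qw$ in the integral representation of the tail, which contributes the additional $x\qw$.

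The higher-order coefficients arise from systematically expanding the integrand around the saddle to successive orders and integrating term by term, as organized in \refS{Shigher}. The main obstacle I expect is the bookkeeping of the two pieces of the Feynman--Kac solution: because $B(u)_+$ treats the positive and negative halves asymmetrically, the explicit form of $\Phi\bmp$ is not a pure Airy ratio like $\Phi\bm$ but a mixed expression, and the Taylor expansion around the saddle needs contributions from both the Airy side and the exponential side to produce the specific coefficients $1/36$, $-5/648$, $109/15552$ for $f\bmp$ and $-11/36$, $193/648$, $-2537/5184$ for the tail. Since the saddle lies on the real axis and the exponent is analytic with non-degenerate quadratic form, only even powers $x^{-2k}$ appear, in agreement with the stated form.
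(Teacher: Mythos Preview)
Your plan is essentially the paper's own approach: feed the known double Laplace transform for $\cB\bmp$ into the general inversion formula (\refT{P1}), apply the saddle-point method of \refS{Ssaddle}, and extract higher-order terms as in \refS{Shigher}. A few points of divergence and emphasis are worth noting.

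First, the paper does not rederive the transform via Feynman--Kac but simply quotes the result of \PW{} (equation \eqref{Hbmp}); your derivation would of course reproduce exactly that formula, with the ``exponential side'' already absorbed into the denominator $z\qq\Ai(z)-\Ai'(z)$. Second, what you flag as the main obstacle --- bookkeeping the two pieces of the Feynman--Kac solution --- is not where the real work lies. Once $\Psi\bmp$ is written down, it enters the machinery on the same footing as the other six cases; the genuine technical points are (i) verifying that $z\qq\Ai(z)-\Ai'(z)$ has no zeros in the slit plane (\refA{Azeros}), so that $\Psi\bmp$ is analytic there, and (ii) checking that the combination $\Hx\bmp$ in \eqref{Hxbmp} decays superexponentially, which is the near-cancellation that makes Tolmatz's contour shift work. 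Your sketch does not mention either of these, and without them the double inversion integral is not even absolutely convergent. Third, the paper obtains the tail expansion $\P(\cB\bmp>x)$ not via a separate inversion with an extra $s\qw$, but simply by integrating the density expansion by parts as in \refS{Stails}; your route would work too but duplicates effort.

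In short: right strategy, but your anticipated difficulty is off-target, and the two analyticity/decay verifications you omit are exactly the steps that distinguish a sketch from a proof.
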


It is not surprising that the tails are roughly Gaussian, with a decay
like $e^{-c x^2}$ for some constants $c$.
Note that the constant in the exponent is $6$ for the Brownian bridge
and excursion, which are tied to 0 at both endpoints, and only
$3/2$ for the Brownian motion, meander and double meander, which are
tied to 0 only at one point. 
It is intuitively clear that the probability of a very
large value is smaller in the former cases. There are also differences
in factors of $x$ between $\cB\br$ and $\cB\ex$, and between $\cB\bm$ and
$\cB\me$, where the process conditioned to be positive has somewhat
higher probabilities of large areas. These differences are in the
expected direction, but we see no intuitive reason for the powers in
the theorems. We have even less explanations for the constant factors
in the estimates.


\begin{remark}\label{R+}
  If we define
$\cB\brm\=\intoi B\br(t)_-\dd t$, we have
$\cB\br=\cB\brp+\cB\brm$;
further, $\cB\brm\eqd\cB\brp$ by symmetry.
Hence, for any $x$,
\begin{equation*}
  \begin{split}
  \P(\cB\br>x)
&\ge
  \P(\cB\brp>x \text{ or }\cB\brm>x)
\\&
=
\P(\cB\brp>x)+\P(\cB\brm>x)- \P(\cB\brp>x \text{ and }\cB\brm>x)
\\&
\ge
2\P(\cB\brp>x)-2\P(\cB\br>2x).	
  \end{split}
\end{equation*}
By Theorems \refand{Tbr}{Tbrp}, the ratio between the two sides is
$1+\tfrac1{36}x^{-2}+O(x^{-4})$;
hence, these inequalities are tight for large $x$. This shows, in a
very precise way, the intuitive fact that the most probable way to
obtain a large value of $\cB\br$ is with one of $\cB\brp$ and
$\cB\brm$ large and the other close to 0.

The same is true for $\cB\bm$ and $\cB\bmpm$ 
by Theorems \refand{Tbr}{Tbrp}. It is interesting to note that
for both $\cB\br$ and $\cB\bm$,
the ratio 
$\P(\cB>x)/2\P(\cB_+)=1+\tfrac1{36}x^{-2}+O(x^{-4})$, 
with the first two terms equal for the two cases
(the third terms differ).
\end{remark}

Tolmatz's method is based on inverting a double Laplace transform;
this double Laplace transform has simple explicit forms (involving the
Airy function) for all seven Brownian areas, see the survey
\cite{SJ201} and the references given there. The inversion is far from
trivial; a straightforward inversion leads to a double integral that
is not even absolutely convergent, and not easy to estimate.
Tolmatz found a clever change of contour that together with
properties of the Airy function leading to
near cancellations 
makes it possible to rewrite the integral as a double integral of a
rapidly decreasing function, for which the saddle point method can be
applied. 
(\citet{KearneyMM} have recently used a similar change of contour 
together with similar near cancellations
to invert a (single) Laplace transform for another
type of Brownian area.) 
We follow Tolmatz's approach, and state his inversion using a 
change of contour in a rather general form in \refS{SP1}; the proof is
given in \refS{SproofP1}.
This inversion formula is then applied to the seven Brownian areas in
Sections \ref{SPsi}--\ref{Shigher}.
Moment asymptotics are derived in \refS{Smoments}.

A completely different proof for the asymptotics of $\P(\cB\br>x)$ and
$\P(\cB\bm>x)$ in Theorems \refand{Tbr}{Tbm} has been given by Fatalov
\cite{Fatalov} using Laplace's method in Banach spaces. 
This method seems to be an interesting and flexible alternative way to
obtain at least first order asymptotics in many situations, and it
would be interesting to extend it to cover all cases treated here.

We use $C_1,C_2,\dots$ 
and $c_1,c_2,\dots$ to denote various positive
constants; explicit values could be given but are unimportant. We also
write, for example, $C_1(M)$ to denote dependency on a parameter (but
not on anything else).

\section{Asymptotics of density and distribution functions}\label{Stails}

The relation between the asymptotics for density functions and
distribution functions in Theorems \ref{Tex}--\ref{Tbmp} can be
obtained as follows.

Suppose that $X$ is a positive random variable with a density function
$f$ satisfying
\begin{equation}\label{fas}
  f(x)\sim ax^\ga e^{-bx^2}, \qquad \xtoo,
\end{equation}
for some numbers $a,b>0$, $\ga\in\bbR$.
It is easily seen, \eg{} by integration by parts, that \eqref{fas} implies
\begin{equation}\label{Fas}
  \P(X>x)\sim \frac{a}{2b}x^{\ga-1} e^{-bx^2}, \qquad \xtoo.
\end{equation}
Obviously, there is no implication in the opposite direction; $X$ may
even satisfy \eqref{Fas} without having a density at all. On the other
hand, if it is known that \eqref{fas} holds with some unknown
constants $a$, $b$, $\ga$, then the constants can be found from the
asysmptotics of $\P(X>x)$ by \eqref{Fas}.

The argument extends to asymptotic expansions with higher order
terms. If, as for the Brownian areas studied in this paper, there is
an asymptotic expansion
\begin{equation}\label{fasx}
  f(x)= x^\ga e^{-bx^2}
\lrpar{a_0+a_2x^{-2}+a_4x^{-4}+\dots+O(x^{-2N})}, 
\qquad \xtoo, 
\end{equation}
then repeated integrations by parts yield a corresponding expansion
\begin{equation}\label{Fasx}
  \P(X>x)= x^{\ga-1} e^{-bx^2}
\lrpar{a'_0+a'_2x^{-2}+a'_4x^{-4}+\dots+O(x^{-2N})}, 
\qquad \xtoo,
\end{equation}
where $a'_0=a_0/(2b)$, $a'_2=a_0(\ga-1)/(2b)^2+a_2/(2b)$, \dots;
in general, the expansion \eqref{fasx} is recovered by formal
differentiation of \eqref{Fasx}, which gives a simple method to find
the coefficients in \eqref{Fasx}.

\section{A double Laplace inversion}\label{SP1}

We state the main step in (our version of)
Tolmatz' method as the following inversion
formula, which is based on and generalizes formulas in Tolmatz
\cite{Tol:br,Tol:bm,Tol:brp}. 

Fractional powers of complex numbers below are interpreted as the
principal values, defined in $\bbC\setminus(-\infty,0]$.

\begin{theorem}\label{P1}
Let $X$ be a positive random variable 
and let $\psi(s)\=\E e^{-sX}$ be its
  Laplace transform.
Suppose that  $0<\nu<3/2$ and that
\begin{equation}
  \label{a1}
\frac1{\Gamma(\nu)}\intoo e^{-xs}\psi(s\qqc)s^{\nu-1}\dd s
=\Psi(x),
\qquad x>0,
\end{equation}
where $\Psi$ is an analytic function in 
the sector \set{z\in\bbC:|\arg z|<5\pi/6}
such that 
  \begin{align}
	\label{H0}
\Psi(z)&=o(|z|^{-\nu}), & z&\to0 \text{ with } |\arg z|< 5\pi/6,
\\
	\label{Hoo}
\Psi(z)&=O(1), & |z|&\to\infty \text{ with } |\arg z|< 5\pi/6.
  \end{align}
Let
\begin{equation}
  \label{Hx}
\Hx(z)\=
e^{2\pi\nu\ii/3}  \Psi\bigpar{e^{2\pi\ii/3}z}
-
e^{-2\pi\nu\ii/3}  \Psi\bigpar{e^{-2\pi\ii/3}z}.
\end{equation}
Finally, assume that 
  \begin{align}
	\label{Hxoox}
\Hx(z)&=O(|z|^{-6}), & |z|&\to\infty \text{ with } |\arg z|<\pi/6.
  \end{align}
Then $X$ is absolutely continuous with a continuous density function
$f$ given by, for $\glx>0$ and every $\xi>0$,
{
\begin{multline}\label{ff}
f(\glx)
=\frac{3\Gamma(\nu)}{8\pi^2\ii}
\xi^{5/2-\nu}\glx^{2\nu/3-5/3}
\\
\shoveright{\cdot
\int_{\gt=-\pi/2}^{\pi/2}\int_{r=0}^\infty
\exp\Bigpar{\xi\glx\qqaw
\sec\gt e^{\ii\gt}-e^{\ii\gt}(\xi\sec\gt)\qqc r\qqcw}}
\\ \cdot
e^{(1-2\nu/3)\ii\gt}
\xpar{\sec\gt}^{7/2-\nu} 
r^{\nu-5/2} \Hx\xpar{re^{\ii\gt/3}}
\dd r \dd\gt.
\end{multline}}
\end{theorem}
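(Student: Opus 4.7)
The plan is a double Bromwich inversion followed by a carefully chosen contour deformation, generalizing Tolmatz's method to cover all seven Brownian areas. For clarity I temporarily write $y$ for the argument of $\Psi$ to avoid conflict with the density variable $x$. The identity \eqref{a1} exhibits $\Psi(y)$ as the Laplace transform in the variable $s$ of the function $s\mapsto\psi(s^{3/2})s^{\nu-1}/\Gamma(\nu)$. The growth bounds \eqref{H0}--\eqref{Hoo} justify Bromwich inversion,
\begin{equation*}
\psi(s^{3/2})\,\frac{s^{\nu-1}}{\Gamma(\nu)}=\frac{1}{2\pi\ii}\int_{c-\ii\infty}^{c+\ii\infty}e^{ys}\Psi(y)\,dy,\qquad\Re s>0,
\end{equation*}
which, combined with the usual density recovery $f(x)=\frac{1}{2\pi\ii}\int e^{xt}\psi(t)\,dt$ and the substitution $t=s^{3/2}$ (so $dt=\tfrac{3}{2}s^{1/2}\,ds$), gives the formal representation
\begin{equation*}
f(x)=\frac{3\Gamma(\nu)}{2(2\pi\ii)^{2}}\int\int e^{xs^{3/2}+ys}\,s^{3/2-\nu}\Psi(y)\,dy\,ds.
\end{equation*}
This double integral is only conditionally convergent along vertical contours, so one cannot evaluate it directly.

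The key step is to rotate the $s$-contour onto the Hankel-type pair of rays $\arg s=\pm 2\pi/3$, along which $\Re s^{3/2}=-|s|^{3/2}$ and hence $e^{xs^{3/2}}$ decays genuinely for $x>0$. The analyticity of $\Psi$ in $\{|\arg z|<5\pi/6\}$ together with \eqref{H0} and \eqref{Hoo} allows the inner $y$-contour on each branch to be rotated to the rays $\arg y=\pm 2\pi/3$, which the natural change of variables $y\mapsto e^{\pm 2\pi\ii/3}y$ returns to the positive real axis with $\Psi(y)$ replaced by $\Psi(e^{\pm 2\pi\ii/3}y)$. The accompanying phases $e^{\pm 2\pi\ii\nu/3}$ produced by $s^{3/2-\nu}$ and $ds$ combine the two branches into precisely the function $\Psi^*$ defined in \eqref{Hx}. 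After this Tolmatz cancellation the resulting integral is absolutely convergent.

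The final formula \eqref{ff} is then obtained by a change of variables tailored to the two-dimensional saddle-point analysis used in later sections. One introduces the free parameter $\xi>0$ and polar-like coordinates $(r,\theta)$ with $\theta\in(-\pi/2,\pi/2)$, via a substitution of the form $s\propto r^{-1}(\xi\sec\theta)e^{2\ii\theta/3}x^{-2/3}$ with a corresponding parametrization of $y$ so that $\Psi^*$ is evaluated at $re^{\ii\theta/3}$. A routine computation then reproduces the exponent $\xi x^{-2/3}\sec\theta\,e^{\ii\theta}-e^{\ii\theta}(\xi\sec\theta)^{3/2}r^{-3/2}$ together with the algebraic/Jacobian factor $e^{(1-2\nu/3)\ii\theta}(\sec\theta)^{7/2-\nu}r^{\nu-5/2}$ and the prefactor $\tfrac{3\Gamma(\nu)}{8\pi^{2}\ii}\xi^{5/2-\nu}x^{2\nu/3-5/3}$. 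Convergence is now clear: \eqref{Hxoox} gives $\Psi^*(re^{\ii\theta/3})=O(r^{-6})$ which dominates $r^{\nu-5/2}$ as $r\to\infty$ (using $\nu<3/2$), while near $r=0$ the factor $\exp\bigpar{-(\xi\sec\theta)^{3/2}r^{-3/2}\cos\theta}$ (with $\cos\theta>0$ for $|\theta|<\pi/2$) crushes the integrand.

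The principal technical obstacle is the rigorous justification of the contour rotations when the nested integrals are only conditionally convergent: one must control arc contributions at $|s|,|y|=R$ as $R\to\infty$ and near $R\to 0$, and naive Fubini is not applicable. The standard remedy is to insert a regulator such as $e^{-\varepsilon(|s|+|y|)}$, perform the rotations, and then pass to the limit $\varepsilon\downarrow 0$; the hypotheses \eqref{H0}, \eqref{Hoo}, and \eqref{Hxoox} are fashioned precisely for this. The independence of \eqref{ff} from the free parameter $\xi$ (verifiable by differentiating under the integral sign) furnishes a welcome internal consistency check that the program has been carried out correctly.
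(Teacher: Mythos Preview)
Your proposal captures the spirit of Tolmatz's method (contour rotation producing the cancellation encoded in $\Psi^*$), but it is organized quite differently from the paper and has real gaps.

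\textbf{How the paper argues.} The paper does \emph{not} attempt a direct double Bromwich inversion. Instead it introduces an auxiliary random variable $T\sim\Gamma(\nu)$ independent of $X$ and studies $X_T:=T^{3/2}X$. A short computation shows $\E e^{-uX_T}=u^{-2\nu/3}\Psi(u^{-2/3})$, so the first inversion is for $X_T$ alone. The contour for this single Laplace inversion is deformed from a vertical line to a Hankel contour around $(-\infty,0]$; the contributions from the two sides of the cut combine to give exactly $\Psi^*$, yielding the explicit formula $g(x)=\frac{1}{2\pi\ii}\int_0^\infty e^{-x\rho}\rho^{-2\nu/3}\Psi^*(\rho^{-2/3})\,d\rho$ for the density of $X_T$. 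Only then does a \emph{second}, separate Laplace inversion (using the convolution relation between $g$ and $f$) recover $f$; the final change to the variables $(r,\theta)$ with the free parameter $\xi$ is done at this second stage. Each inversion is one-dimensional and is justified by a conditional-convergence lemma (the Abel-mean/Poisson-kernel argument), not by a regulator.

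\textbf{Where your outline falls short.} First, you assume from the start that $X$ has a density recoverable by Bromwich inversion; the theorem is supposed to \emph{prove} that $X$ has a continuous density, so this is circular. The paper avoids this by noting that $X_T$ automatically has a continuous density (because $T^{3/2}$ does), carrying out the analysis for $g$, and only at the end checking that the resulting integral for $f$ is absolutely convergent---which then \emph{establishes} that $f$ is a continuous density. Second, the substitution $t=s^{3/2}$ in a Bromwich integral over a vertical line is not innocuous: a vertical line in $t$ does not become a pair of rays in $s$, and you never specify the intermediate contour or justify the subsequent rotation to $\arg s=\pm2\pi/3$. Third, ``insert a regulator and let $\varepsilon\downarrow 0$'' is a plan, not a proof; the hypotheses \eqref{H0}--\eqref{Hoo} only give $\Psi(z)=O(1)$ at infinity, which is not enough to make your nested integrals absolutely convergent even with a mild regulator, and the limit interchange needs work. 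The paper's two-step decoupling via the Gamma variable is precisely what makes each contour move controllable with the stated hypotheses.
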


Note that $\Hx$ is analytic in the
sector $|\arg z|<\frac\pi6$, with, by \eqref{H0} and \eqref{Hoo}, 
  \begin{align}
	\label{Hx0}
\Hx(z)&=o(|z|^{-\nu}), && z\to0 \text{ with } |\arg z|<\frac\pi6,
\\
	\label{Hxoo}
\Hx(z)&=O(1), && |z|\to\infty \text{ with } |\arg z|<\frac\pi6.
  \end{align}
However, we need, as assumed in \eqref{Hxoox}, 
a more rapid decay as $|z|\to\infty$ than this.
\begin{remark}
  \label{Rdelta}
In all our applications, $\Psi$ is, in fact, analytic 
in the slit plane $\bbC\setminus(-\infty,0]$, and \eqref{H0} and
  \eqref{Hoo} hold in any sector ${|\arg z|\le\pi-\gd}$; thus $\Hx$ is
  analytic in ${|\arg z|<\pi/3}$, and \eqref{Hx0} and
  \eqref{Hxoo} hold for
$|\arg z|\le\pi/3-\gd$.
\end{remark}

\begin{remark} \label{RTol}
To obtain Tolmatz' version of the formulas, for example \cite[(30)]{Tol:br}
(correcting a typo there), 
take 
$\nu=1/2$ and $\Hx$ as in \eqref{Hxbr0} below, and
make the substitutions $x=\gl$, $\xi=a\gl\qqa$ and 
$r=a\rho\qqaw\sec\gt$.   
\end{remark}

We prove \refT{P1} in \refS{SproofP1}, 
but show first how it applies
to the Brownian areas.
 
\section{The function $\Hx$ for Brownian areas}\label{SPsi}

For the Brownian bridge area $\cB\br$ we have $\nu=1/2$ and,
see e.g. \cite[(126)]{SJ201},
\begin{equation}\label{Hbr0}
  \Psi(z)=-2^{1/6}\frac{\Ai(2\qqq z)}{\Ai'(2\qqq z)},
\end{equation}
which by the formula \cite[10.4.9]{AS} 
\begin{equation}\label{A2pi/3}
  \Ai(ze^{\pm2\pi\ii/3})=\tfrac12e^{\pm\pi\ii/3}\bigpar{\Ai(z)\mp\ii\Bi(z)}
\end{equation}
and its consequence
\begin{equation}\label{A'2pi/3}
  \Ai'(ze^{\pm2\pi\ii/3})=\tfrac12e^{\mp\pi\ii/3}\bigpar{\Ai'(z)\mp\ii\Bi'(z)}
\end{equation}
together with the Wronskian \cite[10.4.10]{AS} 
\begin{equation}
  \label{Wronskian}
\Ai(z)\Bi'(z)-\Ai'(z)\Bi(z)=\pi\qw
\end{equation}
by a simple calculation
leads to, 
as shown by Tolmatz \cite[Lemma 2.1]{Tol:br}, see \eqref{Hxbr} below,
\begin{equation}\label{Hxbr0}
  \Hx(z)=\frac{2^{7/6}\pi\qw\ii}{\Ai'(2\qqq z)^2+\Bi'(2\qqq z)^2}.
\end{equation}
It seems simpler to instead consider $\sqrtt\cBbr$.
Note that, by the simple change of variables $s\mapsto2\qqq s$ and
$x\mapsto2\qqqw x$ in \eqref{a1}, if
\eqref{a1} holds for some random variable $X$ and a function $\Psi$, it
holds for $\sqrtt X$ and $2^{-\nu/3}\Psi(2\qqqw z)$.
We use the notations $\Psi\br$ and $\Hx\br$ for the case $X=\sqrtt\cBbr$
and obtain from \eqref{Hbr0} the simpler
\begin{equation}\label{Hbr}
  \Psi\br(z)=-\frac{\Ai(z)}{\Ai'(z)}
\end{equation}
and thus, by \eqref{A2pi/3}, \eqref{A'2pi/3} and \eqref{Wronskian},
\begin{equation}\label{Hxbr}
  \begin{split}
  \Hx\br(z)
&=
\sum_{\pm}\pm
e^{\pm\pi\ii/3}\Psi\bigpar{e^{\pm2\pi\ii/3}z}
=
\sum_{\pm}
\mp e^{\pm3\pi\ii/3}
 \frac{\Ai(z)\mp\ii\Bi(z)}{\Ai'(z)\mp\ii\Bi'(z)}
\\&
=
\sum_{\pm}\pm
 \frac{\bigpar{\Ai(z)\mp\ii\Bi(z)}\bigpar{\Ai'(z)\pm\ii\Bi'(z)}}
{\Ai'(z)^2+\Bi'(z)^2}
\\&
=\frac{2\pi\qw\ii}{\Ai'(z)^2+\Bi'(z)^2}.	
  \end{split}
\end{equation}

For the Brownian excursion area $\cB\ex$ we have $\nu=1/2$ and by
Louchard \cite{Lou:ex}, see also \cite[(80)]{SJ201},
\begin{equation}\label{Hex0}
\Psi(z)
=-2^{5/6}\frac{\dd}{\dd z}\lrpar{\frac{\Ai'(2\qqq z)}{\Ai(2\qqq z)}}
=2\qq\lrpar{2\qqq\frac{\Ai'(2\qqq z)}{\Ai(2\qqq z)}}^2-2\qqc z,
\end{equation}
Again, it seems simpler to instead consider $\sqrtt\cBex$, for which we 
use the notation $\Psi\ex$ and $\Hx\ex$. We
have, see
Louchard \cite{Lou:ex} and \cite[(81)]{SJ201}, or by \eqref{Hex0}
and the general relation above,
\begin{equation}\label{Hex}
\Psi\ex(z)
=-2\frac{\dd}{\dd z}\lrpar{\frac{\Ai'(z)}{\Ai(z)}}
=2\lrpar{\frac{\Ai'(z)}{\Ai(z)}}^2-2z, 
\end{equation}
and thus by \eqref{A2pi/3}, \eqref{A'2pi/3} and \eqref{Wronskian}
\begin{equation}\label{Hxex}
  \begin{split}
\Hx\ex(z)
&=
\sum_{\pm}\pm
e^{\pm\pi\ii/3}\Psi\ex\bigpar{e^{\pm2\pi\ii/3}z}
\\&
=
\sum_{\pm}\pm \biggpar{
e^{\pm\pi\ii/3}2
 \lrpar{e^{\mp2\pi\ii/3}\frac{\Ai'(z)\mp\ii\Bi'(z)}{\Ai(z)\mp\ii\Bi(z)}}^2
-2e^{\pm2\pi\ii/3}z}
\\&
=
2\sum_{\pm}\pm
e^{\mp3\pi\ii/3}
 \frac{\bigpar{\Ai'(z)\mp\ii\Bi'(z)}^2\bigpar{\Ai(z)\pm\ii\Bi(z)}^2}
   {\bigpar{\Ai(z)^2+\Bi(z)^2}^2}
+0
\\&
=
2\sum_{\pm}\mp
 \frac{\bigpar{\Ai'(z)\Ai(z)+\Bi'(z)\Bi(z)\mp\ii\pi\qw}^2}
   {\bigpar{\Ai(z)^2+\Bi(z)^2}^2}
\\&
=  \frac{8\pi\qw\ii\bigpar{\Ai(z)\Ai'(z)+\Bi(z)\Bi'(z)}}
   {\bigpar{\Ai(z)^2+\Bi(z)^2}^2}.
  \end{split}
\end{equation}

The Brownian motion area $\cBbm$
is another case treated by
Tolmatz \cite{Tol:bm}.
Note that in this case $\nu=1$.
For $\sqrtt\cB\bm$, we have
by \Takacs{} \cite{Takacs:BM}, see also 
Kac \cite{Kac46},
\PW, and \cite[Section 20 and
  Appendix C.1]{SJ201}, 
$\nu=1$ and
\begin{equation}\label{Hbm}
\Psi\bm(z)
=
-\frac{\AI(z)}{\Ai'(z)},
\end{equation}
where we use the notation, see \cite[Appendix A]{SJ201},
\begin{equation}
\AI(z)\=\int_z^{+\infty} \Ai(t)\dd t
=\frac13-\int_0^z \Ai(t)\dd t.
\end{equation}
If we further define
\begin{equation}
  \label{BI}
\BI(z)\=\int_0^z \Bi(t)\dd t,
\end{equation}
we have by \eqref{A2pi/3}
\begin{equation}  \label{AI2pi/3}
  \begin{split}
\AI(ze^{\pm2\pi\ii/3})
&=
\frac13-\int_0^{ze^{\pm2\pi\ii/3}} \Ai(t)\dd t
\\&
=
\frac13-e^{\pm2\pi\ii/3}\int_0^{z} \Ai(te^{\pm2\pi\ii/3})\dd t
\\&
=
\frac13-\frac12e^{\pm3\pi\ii/3}\int_0^{z} \bigpar{\Ai(t)\mp\ii\Bi(t)}\dd t
\\&
=
\half-\half\AI(z)\mp\half\ii\BI(z).
  \end{split}
\end{equation}
Consequently, using \eqref{Hbm} and \eqref{A'2pi/3},
\begin{equation}\label{Hxbm}
  \begin{split}
\Hx\bm(z)
&=
\sum_{\pm}\pm
e^{\pm2\pi\ii/3}
\Psi\bm\bigpar{e^{\pm2\pi\ii/3}z}
\\&
=
\sum_{\pm}\mp e^{\pm3\pi\ii/3}
\frac{1-\AI(z)\mp\ii\BI(z)}{\Ai'(z)\mp\ii\Bi'(z)}
\\&
=
\sum_{\pm}\pm 
\frac{\bigpar{1-\AI(z)\mp\ii\BI(z)}\bigpar{\Ai'(z)\pm\ii\Bi'(z)}}
{\Ai'(z)^2+\Bi'(z)^2}
\\&
=
2\ii
\frac{\Bi'(z)-\AI(z)\Bi'(z)-\Ai'(z)\BI(z)}
{\Ai'(z)^2+\Bi'(z)^2}
.
  \end{split}
\end{equation}

For the Brownian meander, or more precisely $\sqrtt\cB\me$, by
\Takacs{} \cite{Takacs:meander}, see also \cite[Section 22 and
  Appendix C.3]{SJ201},
\eqref{a1} holds with $\nu=1/2$ and
\begin{equation}\label{Hme}
\Psi\me(z)=\frac{\AI(z)}{\Ai(z)}.
\end{equation}
Consequently, using \eqref{AI2pi/3} and \eqref{A2pi/3},
\begin{equation}\label{Hxme}
  \begin{split}
\Hx\me(z)
&=
\sum_{\pm}\pm
e^{\pm\pi\ii/3}\Psi\me\bigpar{e^{\pm2\pi\ii/3}z}
\\&
=
\sum_{\pm}\pm 
\frac{1-\AI(z)\mp\ii\BI(z)}{\Ai(z)\mp\ii\Bi(z)}
\\&
=
\sum_{\pm}\pm 
\frac{\bigpar{1-\AI(z)\mp\ii\BI(z)}\bigpar{\Ai(z)\pm\ii\Bi(z)}}
{\Ai(z)^2+\Bi(z)^2}
\\&
=
2\ii
\frac{\Bi(z)-\AI(z)\Bi(z)-\Ai(z)\BI(z)}
{\Ai(z)^2+\Bi(z)^2}
.
  \end{split}
\end{equation}

For the Brownian double meander, or more precisely $\sqrtt\cB\dm$, by
\citet{MC:Airy}, see also \cite[Section 23]{SJ201},
\eqref{a1} holds with $\nu=1$ and
\begin{equation}\label{Hdm}
\Psi\dm(z)=\lrpar{\frac{\AI(z)}{\Ai(z)}}^2.
\end{equation}
Consequently, using \eqref{AI2pi/3} and \eqref{A2pi/3},
\begin{equation}\label{Hxdm}
  \begin{split}
\Hx\dm(z)
&=
\sum_{\pm}\pm
e^{\pm2\pi\ii/3}\Psi\dm\bigpar{e^{\pm2\pi\ii/3}z}
\\&
=
\sum_{\pm}\pm 
\lrpar{
\frac{1-\AI(z)\mp\ii\BI(z)}{\Ai(z)\mp\ii\Bi(z)}
}^2
\\&
=
\sum_{\pm}\pm 
\frac{\bigpar{\bigpar{1-\AI(z)\mp\ii\BI(z)}\bigpar{\Ai(z)\pm\ii\Bi(z)}}^2}
{\lrpar{\Ai(z)^2+\Bi(z)^2}^2}
\\&\hskip-2em
=
4\ii
\frac{\bigpar{\xpar{1-\AI(z)}\Ai(z)+\BI(z)\Bi(z)}\bigpar{(1-\AI(z))\Bi(z)-\BI(z)\Ai(z)}}
{\lrpar{\Ai(z)^2+\Bi(z)^2}^2}
.
  \end{split}
\end{equation}

The positive part of a Brownian bridge is another case treated by 
Tolmatz \cite{Tol:brp}.
For $\sqrtt\cB\brp$, by \PW,
see also Tolmatz \cite{Tol:brp} and \cite[Section 22 and Appendix C.2]{SJ201},
\eqref{a1} holds with $\nu=1/2$ and
\begin{equation}\label{Hbrp}
\Psi\brp(z)
=
2
\frac{\Ai(\glz)}
{\glz\qq\Ai(\glz)-\Ai'(\glz)}.
\end{equation}
Consequently,
by \eqref{A2pi/3}, \eqref{A'2pi/3} and \eqref{Wronskian},
\begin{equation}\label{Hxbrp}
  \begin{split}
\Hxbrp(z)
&=
\sum_{\pm}\pm
e^{\pm\pi\ii/3}
\Psi\brp\bigpar{e^{\pm2\pi\ii/3}z}
\\&
=
\sum_{\pm}\pm 2e^{\pm\pi\ii/3}
\frac{\Ai\bigpar{e^{\pm2\pi\ii/3}z}}
{e^{\pm\pi\ii/3}
z\qq\Ai\bigpar{e^{\pm2\pi\ii/3}z}-\Ai'\bigpar{e^{\pm2\pi\ii/3}z}}
\\&
=
2\sum_{\pm}\pm 
\frac
{\Ai(z)\mp\ii\Bi(z)}
{z\qq\bigpar{\Ai(z)\mp\ii\Bi(z)}
 -e^{\mp3\pi\ii/3}\bigpar{\Ai'(z)\mp\ii\Bi'(z)}}
\\&
=
2\sum_{\pm}\pm 
\frac
{\Ai(z)\mp\ii\Bi(z)}
{\bigpar{z\qq\Ai(z)+\Ai'(z)}
\mp\ii\bigpar{z\qq\Bi(z)+\Bi'(z)}}
\\&
=
2\sum_{\pm}\pm 
\frac
{\bigpar{\Ai(z)\mp\ii\Bi(z)}\bigpar{z\qq\Ai(z)+\Ai'(z)
\pm\ii\bigpar{z\qq\Bi(z)+\Bi'(z)}}}
{\bigpar{z\qq\Ai(z)+\Ai'(z)}^2
+\bigpar{z\qq\Bi(z)+\Bi'(z)}^2}
\\&
=
\frac
{4\ii\pi\qw}
{\bigpar{z\qq\Ai(z)+\Ai'(z)}^2
+\bigpar{z\qq\Bi(z)+\Bi'(z)}^2}.
  \end{split}
\raisetag{\baselineskip}
\end{equation}

For the positive part of a Brownian motion, or more precisely
$\sqrtt\cB\bmp$, by \PW,
see also \cite[Section 23 and Appendix C.1]{SJ201},
\eqref{a1} holds with $\nu=1$ and
\begin{equation}\label{Hbmp}
\Psi\bmp(z)
=
\frac{\glz\qqw\Ai(\glz)+\AI(\glz)}
{\glz\qq\Ai(\glz)-\Ai'(\glz)}
.
\end{equation}
Note that this $\Psi$ is singular at 0, but still satisfies \eqref{H0}.
By \eqref{A2pi/3}, \eqref{A'2pi/3}, \eqref{AI2pi/3} and \eqref{Wronskian},
\begin{equation}\label{Hxbmp}
  \begin{split}
\Hxbmp(z)
&=
\sum_{\pm}\pm
e^{\pm2\pi\ii/3}
\Psi\bmp\bigpar{e^{\pm2\pi\ii/3}z}
\\&
=
\sum_{\pm}\pm e^{\pm2\pi\ii/3}
\frac{e^{\mp\pi\ii/3}
z\qqw\Ai\bigpar{e^{\pm2\pi\ii/3}z}+\AI\bigpar{e^{\pm2\pi\ii/3}z}}
{e^{\pm\pi\ii/3}
z\qq\Ai\bigpar{e^{\pm2\pi\ii/3}z}-\Ai'\bigpar{e^{\pm2\pi\ii/3}z}}
\\&
=
\sum_{\pm}\pm 
\frac
{z\qqw\bigpar{\Ai(z)\mp\ii\Bi(z)}+
{1-\AI(z)\mp\ii\BI(z)}}
{z\qq\bigpar{\Ai(z)\mp\ii\Bi(z)}
 -e^{\mp3\pi\ii/3}\bigpar{\Ai'(z)\mp\ii\Bi'(z)}}
\\&
=
\sum_{\pm}\pm 
\frac
{z\qqw\Ai(z)+1-\AI(z)\mp\ii\bigpar{z\qqw\Bi(z)+\BI(z)}}
{\bigpar{z\qq\Ai(z)+\Ai'(z)}
\mp\ii\bigpar{z\qq\Bi(z)+\Bi'(z)}}
\\&
=
2\ii
\frac
{
\bigpar{z\qqw\Ai(z)+1-\AI(z)}\bigpar{z\qq\Bi(z)+\Bi'(z)}
}
{\bigpar{z\qq\Ai(z)+\Ai'(z)}^2
+\bigpar{z\qq\Bi(z)+\Bi'(z)}^2}
\\&
\hskip6em{}-
2\ii
\frac
{
\bigpar{z\qqw\Bi(z)+\BI(z)}\bigpar{z\qq\Ai(z)+\Ai'(z)}}
{\bigpar{z\qq\Ai(z)+\Ai'(z)}^2
+\bigpar{z\qq\Bi(z)+\Bi'(z)}^2}
\\&
\hskip-4em
=
2\ii
\frac
{
\bigpar{1-\AI(z)}\bigpar{z\qq\Bi(z)+\Bi'(z)}
-\BI(z)\bigpar{z\qq\Ai(z)+\Ai'(z)}+z\qqw\pi\qw}
{\bigpar{z\qq\Ai(z)+\Ai'(z)}^2
+\bigpar{z\qq\Bi(z)+\Bi'(z)}^2}
  \end{split}
\end{equation}

Note that the functions $\Psi\br$, $\Psi\ex$, $\Psi\bm$,
$\Psi\me$ and $\Psi\dm$
given above
in \eqref{Hbr}, \eqref{Hex}, \eqref{Hbm}, \eqref{Hme}, \eqref{Hdm}
are meromorphic, with poles only on the negative real axis, because
the only zeros of $\Ai$ and $\Ai'$ are on the negative real axis 
\cite[p.\ 450]{AS}. The functions $\Psi\brp$ and $\Psi\bmp$ 
in \eqref{Hbrp} and \eqref{Hbmp}
are
analytic in the slit plane
$\bbC\setminus(-\infty,0]$, since Tolmatz \cite{Tol:brp} showed
that $z\qq\Ai(z)-\Ai'(z)$ has no zeros in the slit plane;
see \refA{Azeros} for an alternative proof. 
In particular, all seven functions are analytic in the slit plane.
Furthermore, all except $\Psi\bmp$ have finite limits as
$z\to0$, and in particular they are $O(1)$ as $z\to0$ so \eqref{H0}
holds. By \eqref{Hbmp}, we have $\Psi\bmp(z)\sim z\qqw\Ai(0)/\Ai'(0)$
and thus $\Psi\bmp=O(|z|\qqw)$ as $z\to0$; since in this case $\nu=1$,
\eqref{H0} holds for $\Psi\bmp$ too.

Next we consider asymtotics as $|z|\to\infty$.
The Airy functions have well-known asymptotics, see
\cite[10.4.59, 10.4.61, 10.4.63, 10.4.66, 10.4.82, 10.4.84]{AS}. The
leading terms are, 
as $|z|\to\infty$ and uniformly in the indicated sectors for any $\gd>0$,
\begin{align}
\Ai(z)&\sim  \label{Aioo}
\frac{\pi^{-1/2}}{2}z^{-1/4}e^{-\zetax},
&&|\arg(z)|\le\pi-\gd,\\
\Ai'(z)&\sim \label{Aiioo}
-\frac{\pi^{-1/2}}{2}z^{1/4}e^{-\zetax},
&&|\arg(z)|\le\pi-\gd,\\
\AI(z)&\sim  \label{AIoo}
\frac{\pi^{-1/2}}{2}z^{-3/4}e^{-\zetax},
&&|\arg(z)|\le\pi-\gd,\\
\Bi(z)&\sim \label{Bioo}
\pi^{-1/2}z^{-1/4}e^{\zetax}
,
&&|\arg(z)|\le\pi/3-\gd,\\
\Bi'(z)&\sim  \label{Biioo}
\pi^{-1/2}z^{1/4}e^{\zetax}
,
&&|\arg(z)|\le\pi/3-\gd,\\
\BI(z)&\sim   \label{BIoo}
\pi^{-1/2}z^{-3/4}e^{\zetax},
&&|\arg(z)|\le\pi/3-\gd.
\end{align}

It follows by using \eqref{Aioo}, \eqref{Aiioo} and \eqref{AIoo} in
\eqref{Hbr}, \eqref{Hex}, \eqref{Hme}, \eqref{Hbm}, \eqref{Hbrp},
\eqref{Hbmp} that in all seven cases 
\eqref{Hoo} holds; more precisely, 
$\Psi(z)\sim z^{-\nu}$ as $|z|\to\infty$ with $|\arg z|<\pi-\gd$.
(For real $z>0$, this is always true, as follows from \eqref{a1} by
the change of variables $s=t/x$ and monotone (or dominated) convergence.)

Turning to $\Hx$, we observe first that, by \eqref{Hx}, in all seven
cases, $\Hx(z)$ is analytic in $|\arg z|<1/3$. 
Next, 
\eqref{Aioo}--\eqref{BIoo} show that, as $|z|\to\infty$ in a sector 
$|\arg(z)|\le\pi/3-\gd$, $\Ai,\Ai',\AI$ decrease superexponentially
while $\Bi,\Bi',\BI$ increase superexponentially.
Hence, we can ignore all terms involving $\Ai$.
More precisely,
\eqref{Hxbr}, \eqref{Hxex}, \eqref{Hxbm}, \eqref{Hxme}, \eqref{Hxdm}, 
\eqref{Hxbrp},
\eqref{Hxbmp} 
together with \eqref{Aioo}--\eqref{BIoo} 
yield the asymptotics, as $|z|\to\infty$
with (for example) $|\arg z|\le \pi/6$,
\begin{align}
\label{psixbr}
 \Hx\br(z)
&=
\frac{2\pi\qw\ii}{\Bi'(z)^2}\Bigpar{1+O\lrpar{e^{-8z\qqc/3}}},
\\ 
\label{psixex}
\Hx\ex(z)
&=
8\pi\qw\ii\,
  \frac{\Bi'(z)}
   {\Bi(z)^3}
\Bigpar{1+O\lrpar{e^{-8z\qqc/3}}},
\\
\label{psixbm}
\Hx\bm(z)
&=
\frac{2\ii}
{\Bi'(z)}
\Bigpar{1+O\lrpar{e^{-2z\qqc/3}}},
\\
\label{psixme}
\Hx\me(z)
&=
\frac{2\ii}
{\Bi(z)}
\Bigpar{1+O\lrpar{e^{-2z\qqc/3}}},
\\
\label{psixdm}
\Hx\dm(z)
&=4\ii\,
\frac{\BI(z)}
{\Bi(z)^2}
\Bigpar{1+O\lrpar{e^{-2z\qqc/3}}},
\\
\label{psixbrp}
\Hxbrp(z)
&=
\frac
{4\ii\pi\qw}
{\bigpar{z\qq\Bi(z)+\Bi'(z)}^2}
\Bigpar{1+O\lrpar{e^{-4z\qqc/3}}},
\\
\label{psixbmp}
\Hxbmp(z)
&=
\frac
{2\ii}
{z\qq\Bi(z)+\Bi'(z)}
\Bigpar{1+O\lrpar{e^{-\zetax}}}
.
\end{align}
In all seven cases, $\Hx$ decreases superexponentially in the sector; in
particular, \eqref{Hxoox} holds.
It is remarkable that in all seven cases, $\Psi(z)$ decreases slowly, as
$z\qqw$ or $z\qw$, but the linear combination $\Hx(z)$ decreases
extremely rapidly in a sector around the positive real axis; there are
thus almost complete cancellations between the values of $\Psi(z)$ at,
say, $\arg z=\pm2\pi\ii/3$. These cancellations are an important part
of the success of Tolmatz's method.

We have verified all the conditions of \refT{P1}. Hence, the theorem
shows that the variables have continuous density functions given by \eqref{ff}.

\section{The saddle point method}\label{Ssaddle}

We proceed to show how the tail asymptotics for the Brownian areas
follow from \refT{P1} and the formulae in \refS{SPsi} by
straightforward applications of the saddle point method. For
simplicity, we give first a derivation of the leading terms. In the
next section we show how the calculations can be refined to obtain
the asymptotic expansions in Theorems \ref{Tex}--\ref{Tbmp}.

We use $\Xi\in\set{\xbr,\,\xex,\, \xbm,\, \xme,\, \xdm,\, \xbrp,\, \xbmp}$ 
as a variable 
indicating the different Brownian areas we consider.
We begin by writing \eqref{psixbr}--\eqref{psixbmp},
using \eqref{Bioo} and \eqref{Biioo}, as 
\begin{equation}\label{psih}
  \Hx\xxi(z)=h\xxi(z)e^{-\gam\xxi z\qqc},
\end{equation}
where $\gam\br=\gam\ex=\gam\brp=4/3$ and $\gam\bm=\gam\me=\gam\dm=\gam\bmp=2/3$
(note that these cases differ by having two or one points tied to
0)
and, as $|z|\to\infty$ with $|\arg z|\le\pi/6$,
\begin{align}
\label{hbr}
h\br(z)
&\sim 2\ii z\qqw
\\ 
\label{hex}
h\ex(z)
&\sim
8\ii z
\\
\label{hbm}
h\bm(z)
&\sim
2\ii\pi\qq z\qqqqw
\\
\label{hme}
h\me(z)
&\sim
2\ii\pi\qq z\qqqq
\\
\label{hdm}
h\dm(z)
&\sim
4\ii\pi\qq z\qqqqw
\\
\label{hbrp}
h\brp(z)
&\sim
\ii z\qqw
\\
\label{hbmp}
h\bmp(z)
&\sim
\ii\pi\qq z\qqqqw
.
\end{align}
We write the \rhs{s} as $\ii\hh\br(z),\dots,\ii\hh\bmp(z)$,
and thus these formulae can be written
\begin{equation}\label{hhxi}
  h\xxi(z)\sim\ii\hh\xxi(z),
\end{equation}
where $\hh\br(z)=2z\qqw$, $\hh\ex(z)=8z$, and so on.

Consider, for simplicity, first the cases 
$\Xi\in\set{\xbr,\,\xex,\, \xme,\, \xbrp}$ 
where $\nu=1/2$.
We then rewrite \eqref{ff} as, using $\fx\xxi$ for the density of
$\sqrtt\cB\xxi$, 
\begin{equation}
\fx\xxi(\glx)
=
\xi^{2}\glx^{-4/3}
\int_{-\pi/2}^{\pi/2}\int_{0}^\infty
F_0(r,\gt)
e^{\phi_0(r,\gt;x,\xi)}
\dd r \dd\gt
\end{equation}
where, with $\gamxi=\gam\xxi$,
\begin{align}
  F_0(r,\gt)&\=
\frac{3\pi\qqcw}{8\ii}e^{2\ii\gt/3}(\sec\gt)^3r\qww h\xxi\xpar{re^{\ii\gt/3}},
\\
\phi_0(r,\gt;x,\xi)&\=
\xi\glx\qqaw\sec\gt e^{\ii\gt}-e^{\ii\gt}(\xi\sec\gt/r)\qqc
-\gamxi r\qqc e^{\ii\gt/2}.
\end{align}
Remember that $\xi$ is arbitrary;
we choose 
$\xi=\rho x^{8/3}$ 
for a positive constant  $\rho$ that will be chosen later.
Further, make the change of variables $r=x^{4/3} s\qqa$.
Thus,
\begin{equation}\label{fxia}
\fx\xxi(\glx)
=
\rho^{2}\glx^{8/3}
\int_{\gt=-\pi/2}^{\pi/2}\int_{s=0}^\infty
F_1(s,\gt;x)
e^{x^2\phi_1(s,\gt)}
\dd s \dd\gt
\end{equation}
where
\begin{align}
\label{F1a}
  F_1(s,\gt;x)&\=
\frac{1}{4\pi\qqc\ii}e^{2\ii\gt/3}(\sec\gt)^3s^{-5/3} 
 h\xxi\xpar{ x^{4/3} s\qqa e^{\ii\gt/3}},
\\
\phi_1(s,\gt)&\=
\rho\bigpar{1+\ii\tan\gt}-\rho\qqc s\qw e^{\ii\gt}(\sec\gt)\qqc
-\gamxi s e^{\ii\gt/2}.
\label{phi1}
\end{align}
In particular,
\begin{equation}\label{rephi}
\Re\phi_1(s,\gt)=
\rho-\rho\qqc s\qw (\cos\gt)\qqw
-\gamxi s \cos(\gt/2).
\end{equation}

In the cases
$\Xi\in\set{\xbm,\,\xdm,\,\xbmp}$ when $\nu=1$, we obtain similarly
\begin{equation}\label{fxib}
\fx\xxi(\glx)
=
\rho^{3/2}\glx^{7/3}
\int_{\gt=-\pi/2}^{\pi/2}\int_{s=0}^\infty
F_1(s,\gt;x)
e^{x^2\phi_1(s,\gt)}
\dd s \dd\gt
\end{equation}
where
\begin{align}
\label{F1b}
  F_1(s,\gt;x)&\=
\frac{1}{4\pi^2\ii}e^{\ii\gt/3}(\sec\gt)^{5/2}s^{-4/3} 
h\xxi\xpar{ x^{4/3} s\qqa e^{\ii\gt/3}}
\end{align}
and $\phi_1$ is the same as above.

Consider first $\gt=0$; then
\begin{align}
\phi_1(s,0)=
\Re\phi_1(s,0)=
\rho-\rho\qqc s\qw -\gamxi s,
\end{align}
which has a maximum at $s=s_0\=\rho^{3/4}\gamxi\qqw$.
In order for $(s_0,0)$ to be a saddle point of $\phi_1$, 
we need also
\begin{equation}\label{saddle}
0=
\frac{\partial\phi_1}{\partial\gt}(s_0,0)=
\ii \rho-\ii \rho\qqc s\qw -\half\ii\gamxi s
=\ii\bigpar{\rho-\tfrac32 \rho^{3/4}\gamxi\qq}
\end{equation}
and thus
\begin{equation}\label{rho}
  \rho=
  \rho\xxi\=
\Bigpar{\frac{3\gamxi\qq}2}^4=
\Bigpar{\frac{9\gamxi}4}^2
=
\begin{cases}
  9,& \Xi\in\set{\xbr,\xex,\xbrp},
\\
  9/4,& \Xi\in\set{\xbm,\xme,\xdm,\xbmp}.
\end{cases}
\end{equation}
With this choice of $\rho$, we find from \eqref{phi1} and
\eqref{saddle}
that the value
at the saddle point is
\begin{equation}\label{phi1b}
\phi_1(s_0,0)
=\rho-2 \rho^{3/4}\gamxi\qq
=-\frac \rho3
=
\begin{cases}
-  3,& \Xi\in\set{\xbr,\xex,\xbrp},
\\
 - 3/4,& \Xi\in\set{\xbm,\xme,\xdm,\xbmp}.
\end{cases}
\end{equation}
This yields the constant coefficient in the exponent of the
asymptotics.
We denote this value by $-b=-b\xxi$, and have thus, using \eqref{saddle},
\begin{align}\label{rhob}
  \rho&=3b,
&
\rho^{3/4}\gamxi\qq&=2b.
\end{align}
Further, by \eqref{rhob} and \eqref{phi1b}
\begin{align}\label{s0}
s_0
=
\rho^{3/4}\gamxi\qqw
=2b\gam\qw
=
\begin{cases}
 9/2,& \Xi\in\set{\xbr,\xex,\xbrp},
\\
 9/4,& \Xi\in\set{\xbm,\xme,\xdm,\xbmp}.
\end{cases}
\end{align}

The significant part of the integrals in \eqref{fxia} and \eqref{fxib}
comes from the square
\begin{equation}
  Q\=\bigset{(s,\gt):|s-s_0|\le \log x/x,\,|\gt|\le\log x/x}
\end{equation}
around the saddle point, as we will see in \refL{L2} below.
We consider first this square.

By \eqref{F1a}, \eqref{F1b} and \eqref{hbr}--\eqref{hbmp}, uniformly
for $(s,\gt)\in Q$, as \xtoo,
\begin{equation}\label{f1s}
F_1(s,\gt;x)
=
\begin{cases}
\frac{1+o(1)}{4\pi\qqc}s_0^{-5/3} 
\hh\xxi\xpar{x^{4/3}s_0\qqa},
& \Xi\in\set{\xbr,\xex,\xme,\xbrp},
\\[6pt]
\frac{1+o(1)}{4\pi^2}s_0^{-4/3} 
\hh\xxi\xpar{x^{4/3}s_0\qqa},
& \Xi\in\set{\xbm,\xdm,\xbmp}.
\end{cases}
\end{equation}
For the exponential part, we let $s=s_0(1+u/x)$ and $\gt=2v/x$, and
note that $Q$ corresponds to
\begin{equation}
  Q'\=\bigset{(u,v):|u|\le (\log x)/s_0,\,|\gt|\le(\log x)/2}.
\end{equation}
A Taylor expansion yields, for $(u,v)\in Q'$,
after straightforward computations,
\begin{equation}\label{phitaylor}
  \phi_1(s,\gt)
=-b-2bu^2x\qww +2\ii bu v x\qww -bv^2x\qww
+O\bigpar{(|u|^3+|v|^3)x^{-3}}.
\end{equation}
Hence,
\begin{equation}
  \begin{split}
  \iint_Q& e^{x^2\phi_1(s,\gt)}\dd s\dd\gt
=
2s_0 x\qww \iint_{Q'} e^{-b x^2-2bu^2+2\ii buv-bv^2+o(1)} \dd u\dd v
\\&
=
2s_0 x\qww e^{-b x^2}
\biggpar{\intoo\intoo e^{-2bu^2+2\ii buv-bv^2+o(1)} \dd u\dd v+o(1)}
\\
&=
2s_0 x\qww e^{-b x^2} 
\biggpar{
\pi
\begin{vmatrix}
  2b & -\ii b \\ 
-\ii b & b
\end{vmatrix}
\qqw
+o(1)}
\\&
\sim
\frac{2s_0\pi}{\sqrt3\,b} 
 x\qww e^{-b x^2} .
  \end{split}
\label{b2}
\end{equation}
Further, $\iint_Q \bigabs{e^{x^2\phi_1(s,\gt)}}\dd s\dd\gt$ is of
the same order. Consequently,
if we write
\begin{equation*}
G_1(s,\gt;x)\=
F_1(s,\gt;x)e^{x^2\phi_1(s,\gt)},
\end{equation*}
then \eqref{b2} and \eqref{f1s} yield
\begin{equation}\label{b3}
\iint_Q G_1(s,\gt;x)\dd s\dd\gt
=
\begin{cases}
\frac{1+o(1)}{2\sqrt{3\pi}\,b}s_0^{-2/3} 
\hh\xxi\xpar{x^{4/3}s_0\qqa}
x\qww e^{-b x^2},
& \Xi\in\set{\xbr,\xex,\xme,\xbrp},
\\[6pt]
\frac{1+o(1)}{2\sqrt3\,\pi b}s_0^{-1/3} 
\hh\xxi\xpar{x^{4/3}s_0\qqa}
x\qww e^{-b x^2},
& \Xi\in\set{\xbm,\xdm,\xbmp}.
\end{cases}
\end{equation}

For the complement $\Qc:=(0,\infty)\times(-\pi/2,\pi/2)\setminus Q$,
we have the following.

\begin{lemma}
  \label{L2}
For every $N<\infty$, for large $x$,
\begin{equation*}
  \iint_{\Qc} |G_1(s,\gt;x)|\dd s\dd\gt
= O\Bigpar{x^{-N} e^{-bx^2}}.
\end{equation*}
\end{lemma}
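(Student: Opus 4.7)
The plan is to decompose $\Qc$ into three regions and bound the integrand on each, exploiting the fact (visible from \eqref{saddle}--\eqref{phi1b}) that $\Re\phi_1$ attains a strict maximum value $-b$ at $(s_0,0)$ on the full domain $(0,\infty)\times(-\pi/2,\pi/2)$. The regions are: (a) a punctured box $U_\gd\setminus Q$ near the saddle, with $U_\gd\=[s_0-\gd,s_0+\gd]\times[-\gd,\gd]$ for some small fixed $\gd>0$; (b) a compact ``middle'' region $K_M\setminus U_\gd$ with $K_M\=[1/M,M]\times[-\pi/2+\eta,\pi/2-\eta]$; and (c) the ``tail'' region where $s\le 1/M$, $s\ge M$, or $|\gt|\ge\pi/2-\eta$. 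On region (a) I would compute the Hessian of $\Re\phi_1$ at $(s_0,0)$ from \eqref{rephi}; a short calculation gives the diagonal, negative-definite matrix $\operatorname{diag}(-2\gamxi/s_0,\,-\gamxi s_0/4)$, matching the quadratic part of \eqref{phitaylor} after the scaling $s=s_0(1+u/x)$, $\gt=2v/x$. Hence $\Re\phi_1(s,\gt)\le -b-c_0\bigpar{(s-s_0)^2+\gt^2}$ on $U_\gd$, and on $U_\gd\setminus Q$, where $(s-s_0)^2+\gt^2\ge(\log x)^2/x^2$, this yields $x^2\Re\phi_1\le -bx^2-c_0(\log x)^2$. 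Combined with a polynomial-in-$x$ bound on $|F_1|$ on $U_\gd$ (discussed below), the contribution from (a) is $O(x^{-N}e^{-bx^2})$ for any $N$, since $(\log x)^2$ beats any power of $\log x$.

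For region (b), the continuity of $\Re\phi_1$ on the compact set $K_M\setminus U_\gd$ together with the strict-maximum property gives $\Re\phi_1\le -b-\gd'$ there for some $\gd'>0$; combined with the polynomial bound on $|F_1|$, the contribution is $O(e^{-(b+\gd')x^2})=O(x^{-N}e^{-bx^2})$. For region (c), I would use the explicit form \eqref{rephi}: the term $-\rho\qqc s\qw(\cos\gt)\qqw$ tends to $-\infty$ as $s\to 0^+$ or $|\gt|\to\pi/2$, while $-\gamxi s\cos(\gt/2)$ tends to $-\infty$ as $s\to\infty$. Choosing $M$ large and $\eta$ small, I can arrange $\Re\phi_1\le -2b$ throughout (c), giving exponential decay $e^{-2bx^2}$ that dominates any polynomial factor. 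The singular factors $s^{-5/3}$ and $s^{-4/3}$ from \eqref{F1a}--\eqref{F1b}, and the growth of $(\sec\gt)^{3}$ or $(\sec\gt)^{5/2}$, are absorbed by the additional super-exponential decay supplied by $\exp(-x^2\rho\qqc s\qw(\cos\gt)\qqw)$ in (c), which secures integrability and $O(x^{-N}e^{-bx^2})$ bounds on each piece of (c).

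The main obstacle will be a uniform polynomial bound on $|h\xxi(z)|$ across the entire sector $|\arg z|\le\pi/6$. From \eqref{hbr}--\eqref{hbmp} we have $|h\xxi(z)|\le C_1(1+|z|)$ for large $|z|$ in the sector, which suffices on $U_\gd\cup K_M$ since there $|z|=x^{4/3}s^{2/3}\ge c_1 x^{4/3}$ is large. In the deep tail $s\to 0^+$, $|z|$ may be small, and the relevant bound comes from the identity $h\xxi(z)=\Hx(z)\exp(\gamxi z\qqc)$ together with $\Hx(z)=o(|z|^{-\nu})$ from \eqref{Hx0}: the exponential factor is bounded near the origin, so $h\xxi$ has at most a polynomial-order singularity at $z=0$, easily absorbed by the super-exponential decay $\exp(-x^2\rho\qqc/s)$ of the main exponential. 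A careful but routine case-by-case bookkeeping then completes the proof.
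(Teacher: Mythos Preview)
Your proposal is correct and yields a valid proof; the organization, however, differs from the paper's. Rather than decomposing the $(s,\theta)$-domain into a punctured box, a compact middle, and tails, the paper first integrates over $s$ for every fixed $\theta$, using an elementary auxiliary estimate (its Lemma~\ref{L1}) for integrals of the form $\int_0^\infty s^{-M-1}e^{-A/s-Bs}\,ds$. With $A=x^2\rho\qqc(\cos\theta)\qqw$ and $B=x^2\gamma\cos(\theta/2)$ this gives a bound of the shape $Cx^2(\cos\theta)^{-3}e^{\rho x^2-2x^2\sqrt{A(\theta)B(\theta)}}$, and then the key inequality $\sqrt{A(\theta)B(\theta)}\ge 2b+c\,\theta^2$ (which is equivalent to your strict-maximum observation for $\Re\phi_1$) reduces everything to a one-variable integral in $\theta$. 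Three $\theta$-ranges---$|\theta|\le(\log x)/x$ (where a refined version of the lemma handles $|s-s_0|>(\log x)/x$), intermediate $\theta$, and $|\theta|>\pi/2-\eps$ (where $\sqrt{A(\theta)B(\theta)}\gtrsim(\cos\theta)^{-1/4}$)---finish the argument.

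One point in your region~(c) needs a word of care: in the corner $\{s\ge M\}\cap\{|\theta|\ge\pi/2-\eta\}$, the single factor $\exp\bigl(-x^2\rho\qqc s^{-1}(\cos\theta)\qqw\bigr)$ you invoke is \emph{not} by itself enough to kill the $(\sec\theta)^3$ blow-up, because for large $s$ that factor is close to $1$. What does work is to combine both negative terms, e.g.\ via $A(\theta)/s+B(\theta)s\ge \sqrt{2A(\theta)B(\theta)}+\tfrac12 B(\theta)s$, so that $\sqrt{A(\theta)B(\theta)}\gtrsim(\cos\theta)^{-1/4}$ absorbs the $(\sec\theta)^3$ while the residual $\tfrac12 B(\theta)s$ secures integrability in $s$. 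This is indeed the ``routine bookkeeping'' you anticipate; the paper's Lemma~\ref{L1} packages it once. Your route has the merit of being self-contained and conceptually transparent (negative-definite Hessian near the saddle, compactness in the middle, explicit decay in the tails), while the paper's integrate-$s$-first approach is more uniform and avoids treating corner regions separately.
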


We postpone the proof and find from \eqref{fxia}, \eqref{fxib} and \eqref{b3},
using \eqref{rhob},
\begin{equation*}
\fx(x)
\sim
\begin{cases}
\frac{\sqrt3\,\rho}{2\sqrt{\pi}}s_0^{-2/3} 
\hh\xxi\xpar{x^{4/3}s_0\qqa}
x^{2/3} e^{-b x^2},
& \Xi\in\set{\xbr,\xex,\xme,\xbrp},
\\[6pt]
\frac{\sqrt{3\rho}}{2\pi}s_0^{-1/3} 
\hh\xxi\xpar{x^{4/3}s_0\qqa}
x^{1/3} e^{-b x^2},
& \Xi\in\set{\xbm,\xdm,\xbmp}.
\end{cases}
\end{equation*}
Substituting the functions $\hh\xxi$ implicit in \eqref{hbr}--\eqref{hbmp}
and the values of $\rho$, $b$ and $s_0$ given in
\eqref{rho}--\eqref{s0},
we finally find
\begin{alignat}2
\label{fxbr}
\fx\br(z)
&\sim 
\frac{\sqrt3\,\rho}{\sqrt{\pi}}s_0^{-1} 
 e^{-b x^2}
&&=
\frac{2\sqrt3}{\sqrt{\pi}}  e^{-3 x^2},
\\ 
\label{fxex}
\fx\ex(z)
&\sim
\frac{4\sqrt3\,\rho}{\sqrt{\pi}} x^{2} e^{-b x^2}
&&=
\frac{36\sqrt3}{\sqrt{\pi}} x^{2} e^{-3 x^2},
\\
\label{fxbm}
\fx\bm(z)
&\sim
\frac{\sqrt{3\rho}}{\sqrt\pi}s_0^{-1/2} 
e^{-b x^2}
&&=
\frac{\sqrt{3}}{\sqrt\pi}
e^{-3 x^2/4},
\\
\label{fxme}
\fx\me(z)
&\sim
{\sqrt3\,\rho}s_0^{-1/2} x e^{-b x^2}
&&=
\frac{3\sqrt3}2 x e^{-3 x^2/4},
\\
\label{fxdm}
\fx\dm(z)
&\sim
2\frac{\sqrt{3\rho}}{\sqrt\pi}s_0^{-1/2} 
e^{-b x^2}
&&=
\frac{2\sqrt{3}}{\sqrt\pi}
e^{-3 x^2/4},
\\
\label{fxbrp}
\fx\brp(z)
&\sim
\frac{\sqrt3\,\rho}{2\sqrt{\pi}}s_0^{-1} 
 e^{-b x^2}
&&=
\frac{\sqrt3}{\sqrt{\pi}}
 e^{-3 x^2},
\\
\label{fxbmp}
\fx\bmp(z)
&\sim
\frac{\sqrt{3\rho}}{2\sqrt\pi}s_0^{-1/2} 
 e^{-b x^2}
&&=\frac{\sqrt{3}}{2\sqrt\pi} e^{-3 x^2/4}
.
\end{alignat}

Recall that these are the densities of $\sqrtt\cB\xxi$. The density of
$\cB\xxi$ is 
$f\xxi(x)=\sqrtt\fx\xxi(\sqrtt x)$, and we obtain the 
leading term of the
asymptotics in Theorems \ref{Tex}--\ref{Tbmp}.
The leading terms of the asymptotics for $\P(\cB\xxi>x)$ follow by
integration by parts, as discussed in \refS{Stails}. 

It remains to prove \refL{L2}.
We begin by observing that by
\eqref{Hx0}, \eqref{psih} and
\eqref{hbr}--\eqref{hbmp},
\begin{equation}
  |h(z)|=O\bigpar{|z|+|z|\qw},
\qquad \aaz<\pi/6.
\end{equation}
Hence \eqref{F1a} and \eqref{F1b} show that, with some margin,
\begin{equation}
  |F_1(s,\gt;x)| \le \CC\bigpar{x^2s\qw+x\qww s^{-3}}(\cos\gt)^{-3}.
\end{equation}
and thus 
by \eqref{rephi},
for $x\ge1$,
\begin{equation}\label{b1}
  |G_1(s,\gt;x)| 
\le 
\CC
x^2(\cos\gt)^{-3}
\bigpar{s\qw+ s^{-3}}
e^{\rho x^2-x^2A(\gt)s\qw-x^2B(\gt)s},
\end{equation}
where $A(\gt)=\rho\qqc(\cos\gt)\qqw$ and $B(\gt)\=\gam\cos(\gt/2)$.
We integrate over $s$, using the following lemma.

\begin{lemma}
  \label{L1}
Let $M\ge0$. 
\begin{thmenumerate}
  \item
If $A$ and $B$ are positive numbers and $AB\ge1$, then
\begin{equation}
  \intoo s^{-M-1}e^{-As\qw-Bs}\dd s \le \CC(M)(B/A)^{M/2} e^{-2\sqrt{AB}}.
\end{equation}
\item
If further $0<\gd<1$, then
\begin{equation}
  \int_{{\bigabs{s-\sqrt{A/B}}>\gd\sqrt{A/B},\,s>0}} 
s^{-M-1}e^{-As\qw-Bs}\dd s \le \CC(M)(B/A)^{M/2} e^{-(2+\gd^2/2)\sqrt{AB}}.
\end{equation}
\end{thmenumerate}
\end{lemma}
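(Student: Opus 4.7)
My plan is to reduce both bounds to a single, dimensionless integral via the substitution $s=\sqrt{A/B}\,t$, which normalizes the saddle point of the exponent to $t=1$. Setting $\alpha\=\sqrt{AB}\ge 1$, note that $As^{-1}+Bs=\alpha(t+1/t)$ and $ds=\sqrt{A/B}\,dt$, so $s^{-M-1}\,ds=(A/B)^{-M/2}\,t^{-M-1}\,dt$. Thus the two assertions become, respectively,
\begin{align*}
I(\alpha) &\= \int_0^\infty t^{-M-1} e^{-\alpha(t+1/t)}\dd t \le C(M)\,e^{-2\alpha},
\\
I_\gd(\alpha)&\= \int_{|t-1|>\gd} t^{-M-1}e^{-\alpha(t+1/t)}\dd t \le C(M)\,e^{-(2+\gd^2/2)\alpha},
\end{align*}
so once these are proved the lemma follows immediately (the factor $(A/B)^{-M/2}=(B/A)^{M/2}$ is produced by the substitution).

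For (i), I use the identity $t+1/t-2=(t-1)^2/t$ to factor $e^{-\alpha(t+1/t)}=e^{-2\alpha}e^{-\alpha(t-1)^2/t}$. Since the exponent $(t-1)^2/t\ge 0$ and $\alpha\ge 1$, we have $e^{-\alpha(t-1)^2/t}\le e^{-(t-1)^2/t}$, giving
\begin{equation*}
I(\alpha)\le e^{-2\alpha}\int_0^\infty t^{-M-1}e^{-(t-1)^2/t}\dd t.
\end{equation*}
The remaining integral is a pure constant depending only on $M$: near $t=\infty$ the exponential decays like $e^{-t}$, while near $t=0$ it decays like $e^{-1/t}$, which dominates the algebraic singularity $t^{-M-1}$. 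This proves (i).

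For (ii), the key observation is that for $0<\gd<1$ and $|t-1|>\gd$ the minimum of $(t-1)^2/t$ on the region is attained at $t=1+\gd$ (it is increasing on $(1,\infty)$ and decreasing on $(0,1)$), giving the lower bound $(t-1)^2/t\ge \gd^2/(1+\gd)\ge \gd^2/2$. Hence on this region we may write
\begin{equation*}
\alpha(t+1/t)=\bigpar{2+\gd^2/2}\alpha+\alpha\Bigpar{\tfrac{(t-1)^2}{t}-\tfrac{\gd^2}{2}},
\end{equation*}
where the bracketed term is non-negative. Since $\alpha\ge 1$ we again drop $\alpha$ in the second factor, obtaining
\begin{equation*}
I_\gd(\alpha)\le e^{-(2+\gd^2/2)\alpha}e^{\gd^2/2}\int_0^\infty t^{-M-1}e^{-(t-1)^2/t}\dd t,
\end{equation*}
which is the desired bound (absorbing $e^{\gd^2/2}\le e^{1/2}$ into the constant).

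There is no real obstacle here; the lemma is a standard saddle-point estimate for a Laplace-type integral. The only mild care needed is the elementary inequality $(t-1)^2/t\ge\gd^2/2$ on $\{|t-1|>\gd\}$ for $\gd<1$, which underlies (ii), and the observation that the condition $\alpha\ge 1$ (\ie $AB\ge 1$) is exactly what lets us bound $e^{-\alpha g(t)}$ by $e^{-g(t)}$ for non-negative $g$, removing all $\alpha$-dependence from the remaining integral.
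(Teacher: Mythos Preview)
Your proof is correct. Both you and the paper begin with the same substitution $s=\sqrt{A/B}\,t$, reducing to the integral $\int_0^\infty t^{-M-1}e^{-\alpha(t+1/t)}\dd t$ with $\alpha=\sqrt{AB}\ge1$, but the subsequent arguments diverge.

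The paper symmetrizes via $t\mapsto t^{-1}$ to reduce to $t\in(0,1]$, then splits into $(0,1/6)$ and $(1/6,1)$: near $t=1$ it uses the Taylor bound $(1-u)^{-1}+(1-u)\ge 2+u^2$ and integrates the resulting Gaussian, while on $(0,1/6)$ it uses $AB\ge1$ to absorb the singularity $t^{-M-1}$ into half of the exponential $e^{-\alpha/(2t)}$. For (ii) the paper again splits, bounding the near-boundary piece directly by the value of the exponent at $t=1/(1+\gd)$, which gives exactly your inequality $1+\gd+1/(1+\gd)\ge 2+\gd^2/2$.

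Your route is more streamlined: you use the identity $t+1/t-2=(t-1)^2/t$ together with the monotonicity $e^{-\alpha g}\le e^{-g}$ for $\alpha\ge1$, $g\ge0$, collapsing everything to a single fixed convergent integral $\int_0^\infty t^{-M-1}e^{-(t-1)^2/t}\dd t$. This avoids the case analysis entirely and makes the role of the hypothesis $AB\ge1$ completely transparent. For (ii) your observation that $(t-1)^2/t$ attains its minimum over $|t-1|>\gd$ at $t=1+\gd$, giving $\gd^2/(1+\gd)\ge\gd^2/2$, is exactly equivalent to the paper's endpoint bound. The only (negligible) cost of your approach is the extra factor $e^{\gd^2/2}\le e^{1/2}$ absorbed into $C(M)$; the paper's constant for (ii) does not depend on $\gd$ either.
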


\begin{proof}
\pfitem{i}
  The change of variables $s=\sqrt{A/B}\,t$ followed by $t\mapsto
  t\qw$ for $t>1$ yields
\begin{equation}\label{c2}
  \begin{split}
  \intoo s^{-M}e^{-As\qw-Bs}\frac{\dd s}s
&=(B/A)^{M/2}  \intoo t^{-M}e^{-\sqrt{AB}(t\qw+t)}\frac{\dd t}t
\\&
=(B/A)^{M/2}  \intoi \bigpar{t^{-M}+t^M}e^{-\sqrt{AB}(t\qw+t)}\frac{\dd t}t	
  \end{split}
.
\end{equation}
 
For $t\in(\frac16,1)$ we write $t=1-u$ and use $(1-u)\qw+1-u\ge2+u^2$;
hence the integral over $(\frac16,1)$ is bounded by
\begin{equation*}
 \CC(M) \intoo e^{-\sqrt{AB}(2+u^2)}\dd u
\le \CC(M)e^{-2\sqrt{AB}}
\end{equation*}

For $t\in(0,\frac16)$ we use
\begin{equation*}
 t^{-M-1}e^{-\sqrt{AB}\,t\qw/2}
\le \CC(M) (AB)^{-(M+1)/2}
\le \CCx(M);
\end{equation*}
hence the integral over $(0,\frac16)$ is bounded by
\begin{equation}\label{c1}
\CCx(M) 
\int_0^{1/6}
e^{-\sqrt{AB}\,t\qw/2}\dd t
\le 
\CCx(M) e^{-3\sqrt{AB}}.
\end{equation}

\pfitem{ii}
Arguing as in \eqref{c2}, we see that the integral is bounded by 
\begin{equation*}
  \begin{split}
(B/A)^{M/2}  \int_0^{1/(1+\gd)} 
 2t^{-M}e^{-\sqrt{AB}(t\qw+t)}\frac{\dd t}t	
  \end{split}
.
\end{equation*}
The integral over $(0,1/6)$ is bounded by \eqref{c1}, and the integral
over $(1/6,\\1/(1+\gd))$ by 
\begin{equation*}
  \begin{split}
\CC(M)e^{-\sqrt{AB}\bigpar{1+\gd+1/(1+\gd)}}
\le
\CCx(M)e^{-\sqrt{AB}\xpar{2+\gd^2/2}} .
 \qedhere
  \end{split}
\end{equation*}

\end{proof}

\begin{proof}[Proof of \refL{L2}]
Returning to \eqref{b1},
we have $B(\gt)/A(\gt)\le\gam/\rho\qqa$ and
\begin{equation}\label{AB}
  A(\gt)B(\gt)=\rho\qqa\gam (\cos\gt)\qqw\cos(\gt/2).
\end{equation}
Noting that $\rho\qqa\gam=(2b)^2$ by \eqref{rhob} and
\begin{equation*}
  \frac{\cos(\gt/2)^2}{\cos\gt}
=  \frac{\cos\gt+1}{2\cos\gt}
=\frac12+  \frac{1}{2\cos\gt}
\ge 1+\cc\gt^2,
\qquad |\gt|<\pi/2,
\end{equation*}
we see that
\begin{equation}\label{AB2}
\sqrt{A(\gt)B(\gt)}\ge2b+\cc\gt^2.
\end{equation}
Hence \refL{L1} applies with $A=x^2A(\gt)$ and $B=x^2B(\gt)$ when
$x^2\ge 1/(2b)$
and shows, using \eqref{b1}, that for every $\gt$ with
$|\gt|<\pi/2$, 
\begin{equation}\label{l1a}
  \begin{split}
\int_{0}^\infty
|G_1(s,\gt;x)|\dd s 
&
\le \CC x^2(\cos\gt)^{-3} e^{\rho x^2 -4b x^2-2\ccx x^2\gt^2}
\\&
= \CCx x^2(\cos\gt)^{-3} e^{-b x^2-\cc x^2\gt^2}
.
  \end{split}
\end{equation}
For $|\gt|$ close to $\pi/2$, we use instead of \eqref{AB2} 
\begin{equation}\label{AB3}
\sqrt{A(\gt)B(\gt)}\ge\cc(\cos\gt)\qqqqw,
\end{equation}
another consequence of \eqref{AB}. Hence, \eqref{b1} and \refL{L1}(i)
show that if $\eps>0$ is small enough, and $|\gt|>\pi/2-\eps$, then
\begin{equation}\label{l1b}
  \begin{split}
\int_{0}^\infty
|G_1(s,\gt;x)|\dd s 
&
\le \CC x^2(\cos\gt)^{-3} e^{\rho x^2 -\ccx x^2(\cos\gt)\qqqqw}
\\&
\le \CC e^{-b x^2-\ccx x^2/2}
.
  \end{split}
\end{equation}
  Moreover, \eqref{AB} implies that if $|\gt|\le1$, say,
then
\begin{equation*}
  \sqrt{A(\gt)/B(\gt)} = s_0+O(\gt^2).
\end{equation*}
Hence, if $|\gt|\le(\log x)/x$ and 
$|s-s_0|>(\log x)/x$, then, for large $x$,
\begin{equation*}
\lrabs{s- \sqrt{A(\gt)/B(\gt)}}
>\frac{\log x}{2x}
>\cc\frac{\log x}{x} \sqrt{A(\gt)/B(\gt)},
\end{equation*}
and \refL{L1}(ii) implies, using \eqref{AB2}, that 
if $|\gt|\le(\log x)/x$, then
\begin{equation}\label{l1c}
  \begin{split}
\int_{|s-s_0|>\log x/x,\,s>0}
|G_1(s,\gt;x)|\dd s 
&
\le \CC x^2 e^{\rho x^2 -4bx^2-\cc (\log x)^2}
\\&
\le \CC  e^{-bx^2-\cc (\log x)^2}
.
  \end{split}
\end{equation} 
The lemma follows by using  \eqref{l1c} for $|\gt|\le(\log x)/x$, 
\eqref{l1b} for $|\gt|>\pi/2-\eps$,
and \eqref{l1a} for the remaining $\gt$, and integrating over $\gt$.
\end{proof}

\section{Higher order terms}\label{Shigher}

The asymptotics for $f\xxi(x)$ obtained above can be refined to full
asymptotic expansions by standard methods and straightforward, but
tedious, calculations. 
With possible future extensions in view, we find it instructive to
present two versions of this; the first is more straightforward brute
force, while the second (in the next section)
performs a change of variables leading to
simpler integrals.

First, the asymptotics \eqref{Bioo} and \eqref{Biioo} can be refined
into well-known asymptotic series \cite[10.4.63,10.4.66 (with a typo
  in early printings)]{AS}; we write these as
\begin{align}\label{Biasymp}
\Bi(z)&=
\pi^{-1/2}z^{-1/4}e^{\zetax}\gb_0(z),
\\
\Bi'(z)&=
\pi^{-1/2}z^{1/4}e^{\zetax}\gb_1(z),
\label{Biiasymp}
\end{align}
with
\begin{align}\label{gb0}
\gb_0(z)&
={ 1+\frac{5}{48} z^{-3/2} +\frac{385}{4608} z^{-3}+\dots
+O\bigpar{z^{-3N/2}}},
&&|\arg(z)|\le\pi/3-\gd,
\\ \label{gb1}
\gb_1(z)&
={ 1-\frac{7}{48} z^{-3/2} -\frac{455}{4608} z^{-3}+\dots
+O\bigpar{z^{-3N/2}}},
&&|\arg(z)|\le\pi/3-\gd,
\end{align}
where the expansions can be continued to any desired power $N$ of
$z\qqc$.
Similarly, 
\eqref{BIoo} can be refined to an asymptotic series
\begin{align}\label{BIasymp}
\BI(z)&=
\pi^{-1/2}z^{-3/4}e^{\zetax}\gb_{-1}(z),
\end{align}
with
\begin{align}\label{gb-1}
\gb_{-1}(z)&
={ 1+\frac{41}{48} z^{-3/2} +\frac{9241}{4608} z^{-3}+\dots
+O\bigpar{z^{-3N/2}}},
&&|\arg(z)|\le\pi/3-\gd;
\end{align}
this is easily verified by writing \eqref{BI} as
$\BI(z)=\BI(1)+\int_1^z t\qw\Bi''(t)\dd t$ followed by repeated
integrations by parts, as in corresponding argument for $\AI(z)$ in
\cite[Appendix A]{SJ201}. The coefficients in \eqref{gb-1}
are easily found noting that
a formal differentiation of \eqref{BIasymp} yields \eqref{Biasymp}.
(They are the numbers denoted $\gb_k$ in \cite{SJ201}.)

Hence, by \eqref{psixbr}--\eqref{psixbmp}, \eqref{hhxi} can be refined
to, for $\aaz\le\pi/6$,
\begin{equation}\label{hhhxi}
  h\xxi(z)=\ii\hh\xxi(z)\Bigpar{\hhh\xxi(z)+O\lrpar{e^{-2z\qqc/3}}},
\end{equation}
where 
\begin{alignat}2
\hhh\br(z)&\=\gb_1(z)\qww
&&=
 1+\frac{7}{24} z^{-3/2} +\dots
,
 \\
\hhh\ex(z)&\=\gb_1(z)\gb_0(z)^{-3}
&&=
 1-\frac{11}{24} z^{-3/2} +\dots
,                                                \label{hexcurs}
 \\
\hhh\bm(z)&\=\gb_1(z)\qw
&&=
 1+\frac{7}{48} z^{-3/2} +\dots
,
 \\
\hhh\me(z)&\=\gb_0(z)\qw
&&=
 1-\frac{5}{48} z^{-3/2} +\dots
,
 \\
\hhh\dm(z)&\=\gb_{-1}\gb_0(z)\qww
&&=
 1+\frac{31}{48} z^{-3/2} +\dots
,
 \\
\hhh\brp(z)&\=\bigpar{(\gb_0(z)+\gb_1(z))/2}\qww
&&=
 1+\frac{1}{24} z^{-3/2} +\dots
 ,
\\
\hhh\bmp(z)&\=\bigpar{(\gb_0(z)+\gb_1(z))/2}\qw
&&=
 1+\frac{1}{48} z^{-3/2} +\dots
\end{alignat}
By \eqref{gb0} and \eqref{gb1}, $\hhh\xxi$ has an asymptotic series
expansion
\begin{equation}
\label{hhh1}
\hhh\xxi(z)
= 1+d\xxix1 z^{-3/2} +d\xxix2 z^{-3}+\dots
+O\bigpar{z^{-3N/2}},
\qquad |\arg(z)|\le\pi/3-\gd,
\end{equation}
for some readily computed coefficients $d\xxix{k}$; moreover, we can
clearly ignore the $O$ term in \eqref{hhhxi}.

Next, by \refL{L2}, it suffices to consider $(s,\gt)\in Q$ in
\eqref{fxia} and \eqref{fxib}. We use \eqref{hhhxi} and \eqref{hhh1}
in \eqref{F1a} and \eqref{F1b} and obtain, for example, for $\Xi=\xex$,
\begin{equation}
  F_1(s,\gt;x)
=
\frac{2 x^{4/3}}{\pi\qqc}\Bigpar{e^{i\gt}(\sec\gt)^3s\qw
-\frac{11}{24}e^{i\gt/2}(\sec\gt)^3s\qww x\qww+\dots+O(x^{-2N})}.
\end{equation}
We substitute $s=s_0(1+u/x)$ and $\gt=2v/x$ as above and obtain by
Taylor expansions a series in $x\qw$ (with a prefactor $x^{4/3}$)
where the coefficients are polynomials in $u$ and $v$.

Similarly, the Taylor expansion \eqref{phitaylor}
can be continued; we write the remainder term as $R(u,v;x)$ and have 
\begin{equation}
  R(u,v;x)=r_3(u,v)x^{-3}+\dots +O(x^{-2N-2}),
\end{equation}
for some polynomials $r_k(u,v)$. Another Taylor expansion then yields
\begin{equation}
e^{x^2  R(u,v;x)}=r^*_1(u,v)x^{-1}+\dots +O(x^{-2N}),
\end{equation}
for some polynomials $r^*_k(u,v)$. We multiply this, 
the expansion of $F_1$
and the main term
$\exp\xpar{-bx^2-2bu^2+2\ii buv-bv^2}$, 
and integrate over $Q$; we may extend the integration domain
to $\bbR^2$ with a negligible error.
This yields an asymptotic expansion for $\fx_\Xi(x)$, and thus for $f_\Xi(x)$,
where the leading term found above is multiplied by a series in $x\qw$, up to
any desired power. Furthermore, it is easily seen that all coefficients
for odd powers of $x\qw$ vanish, since they are given by the integrals
of an odd functions of $u$ and $v$; hence this is really an asymptotic
series in $x\qww$.

We obtain
the explicit expansions for $f_\Xi(x)$ in 
Theorems \ref{Tex}--\ref{Tbmp}
by calculations with \maple{}.
The asymptotics for $\P(\cB\xxi>x)$ follow by
integration by parts, see \refS{Stails}. 

\begin{remark}
  \label{R++}
In particular, since $\hh\brp=\half\hh\br$
and $\hh\bmp=\half\hh\bm$, the leading terms 
for $\xbrp$ and $\xbmp$
differ from those of $\xbr$ and $\xbm$ by a factor $\half$
as discussed in \refR{R+}.
The second order terms in $\hhh$ are different, as is seen above; more
precisely, $\hhh\brp=\hhh\br-\frac14z\qqcw+O(z^{-3})$ and
$\hhh\bmp=\hhh\bm-\frac18z\qqcw+O(z^{-3})$; it is easily seen that
if we ignore terms beyond the second, this difference transfers into 
factors $1-(4s_0)\qw x\qww$ and 
$1-(8s_0)\qw x\qww$, respectively, for $\fx\xxi$, which in both cases
equals $1-\frac1{18}x\qww$, and thus a factor $1-\frac1{36}x\qww$ for
$f\xxi$, which explains the difference between the second order terms
in $f\br$ or $f\bm$ and $2f\brp$ or $2f\bmp$; \cf{} again \refR{R+}.
\end{remark}

\section{Higher order terms, version II}\label{Shigher2}

Our second version of the saddle point method leads to simpler
calculations (see for
instance, Bleistein and Handelsman \cite{BH86}). We illustrate it with
$\cBex$; the other Brownian areas 
are treated similarly. We use again \eqref{fxia}, and recall that by
\refL{L2}, it suffices to consider $(s,\gt)$ close to
$(s_0,0)=(9/2,0)$.

We make first the substitution $s=\frac92 (\sec\gt)\qqc u\qw$
(this is not necessary, but makes the integral more similar to
Tolmatz' versions).
This transforms \eqref{fxia} into
\begin{equation}\label{ex1}
\fx\ex(\glx)
=\int_{\tet=-\pi/2}^{\pi/2}\int_{u=0}^\II F_{2}(u,\tet;x)
 \expQ{x^2\Fi_2(u,\tet)} \dd u \dd\tet,
\end{equation}
where, by \eqref{F1a}, \eqref{phi1}, \eqref{rho}, \eqref{hhhxi}, \eqref{hex},
\eqref{hexcurs}, for $u$ bounded, at least, 
\begin{align}
  F_2(u,\gt;x)&=
\frac{81 x^{8/3}}{4\pi\qqc\ii}e^{2\ii\gt/3}(\sec\gt)^2
\lrpar{\tfrac92}\qqaw 
u^{-1/3} 
 h\ex\Bigpar{ \bigpar{\tfrac92}\qqa x^{4/3} u\qqaw  e^{\ii\gt/3}\sec\gt},
\nonumber
\\ \label{F2}
&=
\frac{162 x^{4}e^{\ii\gt}}{\pi\qqc u (\cos\gt)^3}
 \hhh\ex\Bigpar{ \bigpar{\tfrac92}\qqa x^{4/3} u\qqaw
   e^{\ii\gt/3}\sec\gt}
\\
&=\frac{162 x^4 e^{i\tet}}{\pi^{3/2}u (\cos \tet)^3}
-\frac{33  x^2  e^{\ii\tet/2}}{2\pi^{3/2} (\cos \tet)^{3/2}}
+O(1)
, \label{F2a}
\\
\Fi_2(u,\tet)&=  9(1+\ii\tan\tet)-6 u e^{\ii\tet}
-\frac{6e^{-\ii\tet}(1+\ii\tan\tet)^{3/2}}{u}
.\label{phi2}
\end{align}
The saddle point is now $(u,\gt)=(1,0)$, and in a neighbourhood we
have, \cf{} \eqref{phitaylor}, with $\xu=u-1$,
\begin{equation}\label{phi2a}
  \phi_2(u,\gt)
=-3-6 \xu^2 -3\ii \xu \gt -\tfrac34\gt^2
+O\bigpar{|\xu|^3+|\gt|^3}.
\end{equation}

The function $\Fi_2$ has a non-degenerate critical point at $(1,0)$,
and by the Morse lemma, see \eg{} \citet[Lemma 2.2]{Milnor},
we can make a complex analytic change of variables in a neighbourhood
of $(1,0)$ such that in the new variables $\Fi_2+3$ becomes a diagonal
quadratic form. (The Morse lemma is usually stated for real variables,
but the standard proof in \eg{} \cite{Milnor} applies to the complex
case too.)
The quadratic part of \eqref{phi2a} is diagonalized by $(\tv,\gt)$ with 
$v=\tv-\ii\gt/4$; we may thus choose the new variables $\tu$ and
$\tgt$ 
such that $\tu\sim\tv$ and $\tgt\sim\gt$ at the critical point, and
thus
\begin{align}
  u&=
1+\tu-\ii\tgt/4+O\bigpar{|\tu|^2+|\tgt|^2}, \label{mo1}
\\
  \gt&=
\tgt+O\bigpar{|\tu|^2+|\tgt|^2}, \label{mo2}
\\
\Fi_2(u,\gt)&=-3-6\tu^2-\tfrac98\tgt^2. \label{phi2b}
\end{align}
Note that the new coordinates are not uniquely determined; we will
later use this and simplify by letting some Taylor coefficients be 0.
In the new coordinates,  \eqref{ex1} yields, 
\begin{equation}\label{ex2}
\fx\ex(\glx)
\sim\int_{\ttet}\int_{\tu} F_{3}(\tu,\ttet;x)
 \expQ{-3x^2-x^2(6\tu^2+\frac98\ttet^2)} 
J(\tu,\tgt)
\dd \tu \dd\ttet,
\end{equation}
where 
$F_3$ is obtained by
substituting $u=u(\tu,\tgt)$ and $\gt=\gt(\tu,\tgt)$ in \eqref{F2}
and
$J(\tu,\tgt)
=\frac{\partial u}{\partial \tu}\frac{\partial \gt}{\partial \tgt}
-\frac{\partial u}{\partial \tgt}\frac{\partial \gt}{\partial \tu}$
is the Jacobian. 
Recall that, up to a negligible error, we only have to integrate in
\eqref{ex1} over a small disc, say with radius $\log x/x$; this becomes
in the new coordinates a surface in $\bbC^2$ as the integration domain
in \eqref{ex2}. The next step is to replace this integration domain
by, for example, the 
disc \set{(\tu,\tgt)\in\bbR^2:|\tu|^2+|\tgt|^2\le (\log x/x)^2}, 
in analogy with the much more standard change of integration
contour in one complex variable. To verify the change of integration
domain, note that if $F(z_1,z_2)$ is any analytic function of two
complex variables, then $F(z_1,z_2)\dd z_1\wedge\dd z_2$ is a closed
differential form in $\bbC^2$ (regarded as a real manifold of
dimension four), and thus
$\int_{\partial M} F(z_1,z_2)\dd z_1\wedge\dd z_2=0$  by Stokes'
theorem for any compact submanifold $M$ with boundary $\partial M$.
In our case, it follows that the difference between the integrals over
the two domains equals an integral over boundary terms at a distance
$\asymp \log x/x$ from the origin, which is negligible.
(The careful reader may parametrize the two domains by suitable mappings
$\psi_0,\psi_1:U\to\bbC^2$, where $U$ is the unit disc in $\bbR^2$,
and apply Stokes' theorem to the cylinder $U\times[0,1]$ and the
pullback of $F(z_1,z_2)\dd z_1\wedge\dd z_2$ by the map 
$(w,t)\mapsto (1-t)\psi_0(w)+t\psi_1(w)$.) 

We next change variable again to $\qr=x\tu$, $t=x\tgt$, and obtain by
\eqref{ex2} 
\begin{equation}\label{ex3}
\fx\ex(\glx)
\sim
x\qww e^{-3x^2}
\iint F_{3}(\qr/x,t/x;x)
J(\qr/x,t/x)
 \expQ{-6\qr^2-\frac98t^2} 
\dd \qr \dd t,
\end{equation}
integrating over $(\qr,t)\in\bbR^2$ with, say, $\qr^2+t^2\le(\log x)^2$.
To obtain the desired asymptotics for $\fx\ex$, and thus for $f\ex$,
we mechanically expand $F_3$ and $J$ in Taylor series up to any
desired order and compute the resulting Gaussian integrals, extending
the integration domains to $\bbR^2$.

We illustrate this by giving the details for the first two terms in
\eqref{tex1}. 
We have, \cf{} \eqref{mo1} and \eqref{mo2}, expansions
\begin{align*}
u&=1+\xr-\ii \xs/4+\al_1 \xr^2+\al_2 \xr\xs+O(|\xr|^3+|\xs|^3),\\
\tet&=\xs+\al_3 \xs^2+\al_4 \xr\xs+O(|\xr|^3+|\xs|^3),
\end{align*}
where we, as we may, have chosen two Taylor coefficients to be 0.
To determine $\al_1,\dots,\al_4$, we substitute into
$\Fi_2(u,\tet)$. We obtain from \eqref{phi2}, up to terms of order three,
\begin{multline*} 
\Fi_2(u,\tet)\sim -3-[6\xr^2+9\xs^2/8]
+ (6-12\al_1)\xr^3+(-3\ii\al_4-12\al_2-15\ii/2)\xs\xr^2\\
+ \lp -\frac{9\al_4}{4}-3\ii\al_3+\frac{33}{8}\rp \xs^2\xr
+\lp -\frac{9\al_3}{4}+\frac{15\ii}{32}\rp \xs^3.
\end{multline*}
Annihilating the coefficients, \cf{} \eqref{phi2b}, leads to a linear
system, the solution 
of which is 
\begin{align*}
 & \al_1=1/2,&&\al_2=-83\ii/72,&&\al_3=5\ii/24,&&\al_4=19/9.
\end{align*}

This leads to the Jacobian
\beqs J(\xr,\xs)
=  1+ \lp -\frac{5 \ii \xs}{24}+\frac{28 \xr}{9}\rp 
+O\bigpar{\xs^2+\xr^2}
.\eeqs
Furthermore, by \eqref{F2a}, with $\qr=x\tu$ and $t=x\ttet$, 
\beqs 
F_3(\xr,\xs;x)=
F_{2}(u,\tet;x)
\sim
\frac{162 x^4}{\pi^{3/2}}+\frac{162
  x^3}{\pi^{3/2}}\Bigpar{\frac54\ii t-\qr}
+O\bigpar{x^2(1+\qr^2+t^2)}.
\eeqs
Integrating in \eqref{ex3} yields the leading term
\begin{equation}
  \label{E4}
\fx\ex(x)\sim \frac{36 \sqrt3}{\pi^{1/2}}\,x^2 e^{-3 x^2}
\end{equation}
together with correction terms of order $xe^{-3 x^2}$ that all
vanish by symmetry, since they involve integrals of odd functions,
plus a remainder term of order $e^{-3 x^2}$.

The next term in the expansion of $e^{3x^2}\fx\ex$ is thus the constant term.
To find it,  we try, again setting some Taylor coefficients to 0 as we
may,
\bals
u&\sim1+(\xr-\ii \xs/4)+\xr(\al_1 \xr+\al_2 \xs)+\xr(\be_1 \xr^2+\be_2 \xr\xs+\be_3 \xs^2),\\
\tet&\sim \xs+\xs(\al_3 \xs+\al_4 \xr)+\xs(\be_4 \xr^2+\be_5 \xr\xs+\be_6 \xs^2).
\end{align*}
We obtain now
\begin{multline*}
\Fi_2(u,\tet)\sim -3-[6\xr^2+9\xs^2/8]
+(3/2-12\be_1)\xr^4+(-131\ii/6-3\ii\be_4-12\be_2)\xr^3\xs
\\
+(-9\be_4/4+2627/288-12\be_3-3\ii\be_5)\xr^2\xs^2 
+(-9\be_5/4+535\ii/96-3\ii\be_6)\xr\xs^3 
\\
+(-1283/768-9\be_6/4)\xs^4.
\end{multline*}
We set for instance $\be_4=0$. This gives
\beqs \be_1=1/8
,\be_2=-131\ii/72,\be_3=16867/10368,\be_5=4493\ii/1296,\be_6=-1283/1728.\eeqs 
The Jacobian becomes
\beqs J(\xr,\xs)\sim  1+ \lp \frac{28 \xr}{9}-\frac{5 \ii \xs}{24}\rp+ \lp
\frac{179}{72}\xr^2+\frac{2405}{648}\ii \xr\xs -\frac{379}{384}\xs^2\rp.\eeqs 
The first term in \eqref{F2a} becomes
\beqs 
\sim\frac{162 x^4}{\pi^{3/2}}
+\frac{162 x^4}{\pi^{3/2}}\Bigpar{\frac54\ii \xs-\xr}
+\frac{x^4}{\pi^{3/2}}\Bigpar{81 \xr^2+\frac{1143}4 \ii \xr\xs+\frac{621}8 \xs^2}.\eeqs
and the second is
\beqs 
\sim-\frac{33 x^2}{2\pi^{3/2}}
.\eeqs
Collecting terms, the coefficient of $x^2$ in 
$ F_{3}(\qr/x,t/x)J(\qr/x,t/x)$ equals
\beqs -\frac{1296 \qr^2-99248 \ii \qr t+2565t^2}{64 \pi^{3/2}}
-\frac{33}{2\pi^{3/2}}
.\eeqs
Multiplying by $\exp(-6\qr^2-\frac98 t^2)$ and integrating 
yields the contribution
\beq
x^2\frac{-8\sqrt3}{\pi^{1/2}}  \label{E5}
\eeq
to the integral in \eqref{ex3}.
So, finally, combining (\ref{E4}) and (\ref{E5}),
\beqs \fx\ex(x)\sim \frac{3^{1/2}e^{-3 x^2}}{\pi^{1/2}}\lb 36 x^2 - 8 \rb,\eeqs
which fits with the first two terms for $f\ex(x)$ in \refT{Tex}.
More terms can be found in a mechanical way. 

\section{Proof of \refT{P1}}\label{SproofP1}
  Let $T\sim\Gamma(\nu)$ be a Gamma distributed random variable
independent of $X$ and let $X_T\=T\qqc X$.
Then $T$ has the density $\Gamma(\nu)\qw t^{\nu-1}e^{-t}$, $t>0$, and
thus $X_T$ has, 
using \eqref{a1}, the Laplace transform 
\begin{equation}
  \label{a2}
  \begin{split}
\psi_T(u)
&:=\E e^{-uT\qqc X}
=\E\psi\bigpar{u T\qqc}
\\&
=\Gamma(\nu)\qw\intoo\psi\bigpar{ut\qqc}t^{\nu-1}e^{-t}\dd t
\\&
=\Gamma(\nu)\qw\intoo\psi\bigpar{s\qqc}u^{-2\nu/3}s^{\nu-1}e^{-u\qqaw s}\dd s
\\&
=u^{-2\nu/3}\Psi\bigpar{u\qqaw},
\qquad u>0.	
  \end{split}
\end{equation}
By \eqref{a2} and our assumption on $\Psi$, $\psi_T$ extends to an
analytic function 
in $\bbC\setminus(\infty,0]$.
Furthermore, $X_T$ has a  density $g$ on $(0,\infty)$,
because $T\qqc$ has, and it is easily verified that this density is
continuous.
We next use Laplace inversion for $X_T$. The Laplace transform
$\psi_T$ is, by \eqref{a2}, \emph{not} absolutely integrable on
vertical lines
(at least not in our cases, where $\Psi(z)$ is bounded away from $0$
as $z\to0$),
so we will use the following form of the Laplace inversion formula,
assuming only conditional convergence of the integral.
\begin{lemma}
  \label{LLinv}
Let $\fh$ be a measurable function on $\bbR$.
Suppose that the Laplace transform $\tfh(z)\=\intoooo
\fh(y)e^{-zy}\dd y$ exists in a strip $a<\Re z<b$, and that
$\gs\in(a,b)$ is a real number such that the 
generalized integral 
$\int_{\gs-\ii\infty}^{\gs+\ii\infty} e^{xz}\tfh(z)\dd z$
exists in the sense that the limit $\lim_{A\to\infty} s_A$ exists, where
\begin{equation*}
s_A
\=
 \int_{\gs-\ii A}^{\gs+\ii A} e^{xz}\tfh(z)\dd z.
\end{equation*}
If further $x$ is a continuity point
(or, more generally, a Lebesgue point)
of
$\fh$, then 
\begin{equation*}
  \int_{\gs-\ii\infty}^{\gs+\ii\infty} e^{xz}\tfh(z)\dd z
\=
\lim_{A\to\infty} s_A
=2\pi \ii \fh(x).
\end{equation*}
\end{lemma}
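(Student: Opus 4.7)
The plan is to reduce the problem to classical Fourier inversion on $\bbR$ by parametrising the vertical contour as $z=\gs+\ii t$. Setting $g(y)\=\fh(y)e^{-\gs y}$, the existence of $\tfh$ at $z=\gs$ forces $g\in L^1(\bbR)$, and $\tfh(\gs+\ii t)=\hat g(t)\=\intoooo g(y)e^{-\ii ty}\dd y$ is its Fourier transform. Under this parametrisation
\begin{equation*}
  s_A=\ii e^{\gs x}\int_{-A}^{A}e^{\ii xt}\hat g(t)\dd t,
\end{equation*}
so the lemma reduces to the assertion that if the symmetric partial Fourier integrals $S_A\=\frac{1}{2\pi}\int_{-A}^{A}e^{\ii xt}\hat g(t)\dd t$ converge as $A\to\infty$, then the limit equals $g(x)$ at every Lebesgue point of $g$. (Any Lebesgue point of $\fh$ is a Lebesgue point of $g$, since $e^{-\gs y}$ is smooth.)

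The main obstacle is that $\hat g$ is not assumed to lie in $L^1(\bbR)$, so ordinary Fourier inversion does not apply directly. The workaround is to invoke Fej\'er summability: a standard approximate-identity computation (interchange the order of integration against the Fej\'er kernel by Fubini, then use $g\in L^1$) yields, for every Lebesgue point $x$ of $g$,
\begin{equation*}
  \lim_{A\to\infty}\frac{1}{2\pi A}\int_{-A}^{A}\bigpar{A-|t|}e^{\ii xt}\hat g(t)\dd t=g(x).
\end{equation*}
A direct swap of the order of integration between $B$ and $t$ identifies the left-hand side with the Ces\`aro mean $\frac{1}{A}\int_{0}^{A}S_B\dd B$.

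The final step is elementary: the hypothesis gives convergence of $s_A$, hence of $S_A$, as $A\to\infty$, and if a function on $(0,\infty)$ tends to a limit at infinity then so does its Ces\`aro mean, with the same value. Combining this with the Fej\'er identity above gives $\lim_{A\to\infty}S_A=g(x)$, which unwinds to
\begin{equation*}
  \lim_{A\to\infty}s_A=2\pi\ii e^{\gs x}g(x)=2\pi\ii\fh(x),
\end{equation*}
as required. The only non-routine input is the Fej\'er/Ces\`aro step; everything else is bookkeeping.
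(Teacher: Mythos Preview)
Your proof is correct and follows the same overall strategy as the paper: reduce to Fourier inversion by absorbing $e^{-\gs y}$ into the function, then identify the limit of the partial inverse transforms via a summability method whose kernel is an approximate identity, using the elementary fact that ordinary convergence implies convergence of the summability means to the same value. The only difference is that the paper uses Abel summability and the Poisson kernel where you use Ces\`aro summability and the Fej\'er kernel; both are classical and equally adequate here.
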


\begin{proof}
  By considering instead $e^{-\gs y}\fh(y)$, we may suppose that
  $\gs=0$. In this case, 
$h$ is integrable and
$\tfh(\ii t)=\hfh(t)$, the Fourier transform of
  $\fh$, and the result is a classical result on Fourier inversion. (It
  is the analogue for Fourier transforms of the more well-known fact
  that if a Fourier series converges at a continuity 
(or Lebesgue)
point of the
  function, then the limit equals the function value.)
For a proof, note that if $s_A$ converges as $A\to\infty$, then so
  does the Abel mean
$\intoo ye^{-yA}s_A\dd A$ as $y\to0$, and this Abel mean
equals $2\pi\ii$ times the Poisson integral 
$\intoooo\pi\qw y(u^2+y^2)\qw \fh(x-u)\dd u$, which converges to $\fh(x)$.
\end{proof}

We verify the condition of the lemma with $h=g$ and $\gs=1$, recalling
that $\tg=\psi_T$. 
Thus, by \eqref{a2},
\begin{equation*}
s_A
\=
 \int_{1-\ii A}^{1+\ii A} e^{xz}\psi_T(z)\dd z
=
 \int_{1-\ii A}^{1+\ii A} e^{xz}z^{-2\nu/3}\Psi\xpar{z\qqaw}\dd z.
\end{equation*}
We may here change the integration path from the straight line segment
$[1-\ii A,1+\ii A]$ to the path consisting of the following seven
parts:
\begin{itemize}
  \item[$\gamma_1$:] the line segment $[1-\ii A,-A-\ii A]$,
  \item[$\gamma_2$:] the line segment $[-A-\ii A,-A-\ii 0]$,
  \item[$\gamma_3$:] the line segment $[-A-\ii 0,-\eps-\ii 0]$,
  \item[$\gamma_4$:] the circle \set{\eps e^{\ii\phit}: \phit\in[-\pi,\pi]}.
  \item[$\gamma_5$:] the line segment $[-\eps+\ii 0,-A+\ii 0]$,
  \item[$\gamma_6$:] the line segment $[-A+\ii 0,-A+\ii A]$,
  \item[$\gamma_7$:] the line segment $[-A+\ii A,1+\ii A]$.
\end{itemize}
(Here, $\gamma_3$ could formally be interpreted as the line segment
$[-A-\ii\eta,-\sqrt{\eps^2-\eta^2}-\ii\eta]$ for a small positive
$\eta$, taking the limit of the integral
as $\eta\to0$, and similarly for the other
parts with $\pm\ii0$.)
Letting $A\to\infty$, we see that we essentially change the
integration path from a vertical line to a Hankel contour; however, we
do this carefully since, as said above, the integral along the
vertical line is not absolutely convergent.

We now first let $\eps\to0$. By \eqref{Hoo},
\begin{equation*}
   \int_{\gamma_4} e^{xz}z^{-2\nu/3}\Psi\xpar{z\qqaw}\dd z
=O\lrpar{\eps^{1-2\nu/3}} \to0,
\end{equation*}
and, again by \eqref{Hoo}, 
the integrals along $\gamma_3$ and
$\gamma_5$
converge to the absolutely convergent integrals
\begin{equation*}
  \int_{-A-\ii0}^{-\ii0} e^{xz}z^{-2\nu/3}\Psi\xpar{z\qqaw}\dd z
=\int_0^A e^{-x\rho}\rho^{-2\nu/3} e^{2\pi\nu\ii/3}
  \Psi\bigpar{e^{2\pi\ii/3}\rho\qqaw}\dd\rho
\end{equation*}
and
\begin{equation*}
  \int_{\ii0}^{-A+\ii0} e^{xz}z^{-2\nu/3}\Psi\xpar{z\qqaw}\dd z
=-\int_0^A e^{-x\rho}\rho^{-2\nu/3} e^{-2\pi\nu\ii/3}
  \Psi\bigpar{e^{-2\pi\ii/3}\rho\qqaw}\dd\rho,
\end{equation*}
which together make
\begin{equation*}
I_A:=\int_0^A e^{-x\rho}\rho^{-2\nu/3} \Hx\bigpar{\rho\qqaw}\dd\rho.
\end{equation*}
Hence, for every $A>0$,
\begin{equation*}
  s_A=I_A+\lrpar{\int_{\gam_1}+\int_{\gam_2}+\int_{\gam_6}+\int_{\gam_7}}
e^{xz}z^{-2\nu/3}\Psi\xpar{z\qqaw}\dd z.
\end{equation*}
Now let $A\to\infty$.
By \eqref{H0},
\begin{equation*}
   \int_{\gamma_1} e^{xz}z^{-2\nu/3}\Psi\xpar{z\qqaw}\dd z
=o\lrpar{\int_{-\infty}^1 e^{xt}\dd t}=o(1),
\end{equation*}
and similarly $\int_{\gam_2}=o(1)$, $\int_{\gam_6}=o(1)$,
$\int_{\gam_7}=o(1)$. 
Finally, $I_A\to I_\infty$, and \refL{LLinv} applies and yields the following.

\begin{lemma}\label{Lg}
For every $x>0$, we have
  \begin{equation}\label{g4}
	g(x)=\frac{1}{2\pi\ii}\intoo
	e^{-x\rho}\rho^{-2\nu/3}\Hx\xpar{\rho\qqaw}\dd\rho,
  \end{equation}
where the integral is absolutely convergent by \eqref{Hx0} and \eqref{Hxoo}.
\end{lemma}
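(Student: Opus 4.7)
The plan is to apply Lemma~\ref{LLinv} to the density $g$ of $X_T = T\qqc X$ at abscissa $\gs = 1$, so that
\[
2\pi\ii\, g(x) \;=\; \lim_{A\to\infty} s_A,
\qquad s_A \;=\; \int_{1-\ii A}^{1+\ii A} e^{xz}\psi_T(z)\,\dd z,
\]
with $\psi_T(z) = z^{-2\nu/3}\Psi(z\qqaw)$ from \eqref{a2}. The main obstacle is that $\psi_T(z)$ only decays like $|z|^{-2\nu/3}$ on vertical lines, so $s_A$ is only conditionally convergent and a contour deformation is unavoidable.

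Since $\psi_T$ is analytic in $\bbC\setminus(-\infty,0]$, I would deform the vertical segment $[1-\ii A,\,1+\ii A]$ to a Hankel-type contour $\gamma_1\cup\cdots\cup\gamma_7$ that goes horizontally to $\Re z = -A$, hugs the branch cut from $-A$ to $-\eps$ on both sides, and skirts the origin on a circle of radius $\eps$. Letting $\eps\downarrow 0$ first, the small circle $\gamma_4$ contributes $O(\eps^{1-2\nu/3})\to 0$ thanks to \eqref{Hoo} and the hypothesis $\nu<3/2$, while the segments $\gamma_3,\gamma_5$ on the two sides of the cut pick up the jump
\[
e^{2\pi\nu\ii/3}\Psi\bigpar{e^{2\pi\ii/3}\rho\qqaw} - e^{-2\pi\nu\ii/3}\Psi\bigpar{e^{-2\pi\ii/3}\rho\qqaw} = \Hx\xpar{\rho\qqaw},
\]
by the definition \eqref{Hx}, so their combined contribution is exactly $I_A := \int_0^A e^{-x\rho}\rho^{-2\nu/3}\Hx(\rho\qqaw)\,\dd\rho$.

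Next I would let $A\to\infty$. On the far vertical pieces $\gamma_2,\gamma_6$ at $\Re z = -A$, the factor $e^{xz}$ decays like $e^{-xA}$ and easily beats any polynomial-in-$A$ growth of $\psi_T$; on the horizontal pieces $\gamma_1,\gamma_7$ at $\Im z = \pm A$, the bound $\Psi(w) = o(|w|^{-\nu})$ as $w\to 0$ from \eqref{H0}, applied at $w = z\qqaw$, gives $\psi_T(z) = o(1)$ uniformly, so these pieces are dominated by $o(1)\cdot\int_{-\infty}^{1} e^{xt}\dd t = o(1)$. Hence $s_A - I_A \to 0$, while $I_A \to I_\infty$. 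The limit $I_\infty$ is absolutely convergent: near $\rho = 0$, \eqref{Hxoo} gives $\Hx(\rho\qqaw) = O(1)$, so the integrand is $O(\rho^{-2\nu/3})$, integrable since $2\nu/3 < 1$; near $\rho = \infty$, \eqref{Hx0} gives $\Hx(\rho\qqaw) = o(\rho^{2\nu/3})$, tamed by the factor $e^{-x\rho}$. Applying Lemma~\ref{LLinv} at the continuity point $x>0$ of $g$ then yields \eqref{g4}. The only subtle point in the whole argument is the simultaneous management of the two limits $\eps\to 0$ and $A\to\infty$ and the extraction of the jump across the cut, and both are handled by the a priori bounds \eqref{H0} and \eqref{Hoo} on $\Psi$.
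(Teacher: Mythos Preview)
Your proof is correct and follows essentially the same route as the paper: deform the vertical segment to a Hankel contour around the cut, let $\eps\to0$ using \eqref{Hoo} to kill the small circle and pick up the jump $\Hx(\rho\qqaw)$ across the cut, then let $A\to\infty$ using \eqref{H0} to kill the far pieces, and finally invoke Lemma~\ref{LLinv}. The only cosmetic difference is that the paper dispatches $\gamma_2,\gamma_6$ with the same $o(1)$ bound from \eqref{H0} rather than the exponential factor $e^{-xA}$ you use, but both arguments are valid.
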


By the change of variables $\rho=u\qqcw$, \eqref{g4} may be rewritten as
  \begin{equation}
	g(x)=\frac{3}{4\pi\ii}\intoo
	e^{-x u\qqcw}u^{\nu-5/2}\Hx\xpar{u}\dd u,
\qquad x>0.
  \end{equation}
We can here, using \eqref{Hx0} and \eqref{Hxoo}, change the
integration path from the positive real axis to the line 
$\set{r e^{\ii\phi}: r>0}$,
for every fixed $\phi$ with $|\phi|<\frac\pi6$. Consequently, we
further have, for $x>0$ and $|\phi|<\frac\pi6$,
  \begin{equation}\label{g6}
	g(x)=\frac{3}{4\pi\ii}e^{(\nu-3/2)\ii\phi}\intoo
	\expx{- e^{-3\ii\phi/2}x r\qqcw}r^{\nu-5/2}\Hx\xpar{re^{\ii\phi}}\dd r.
  \end{equation}
The \rhs{} of \eqref{g6} is an analytic function of $x$ in the sector
$\set{x:|\arg x-3\phi/2|<\pi/2}$, which contains the positive real
axis; together, these thus define an analytic extension of 
$g(x)$ to the sector $|\arg x|<3\pi/4$ such that
\eqref{g6} holds whenever $|\arg x|<3\pi/4$,
$|\phi|<\frac\pi6$ and
$|\arg x-3\phi/2|<\pi/2$.

We next find the density of $X$ from $g$ by another Laplace inversion.
Assume first, for simplicity, that we already know that $X$ has a
continuous density $f$ on $(0,\infty)$.
Then $t\qqc X$ has the density $t\qqcw f(t\qqcw x)$, and thus (using
$t=x\qqa s$), for $x>0$,
\begin{equation}
  \label{a2a}
  \begin{split}
g(x)&=
\Gamma(\nu)\qw\intoo t^{\nu-1}e^{-t} t\qqcw f\bigpar{t\qqcw x}\dd t
\\
&=
\Gamma(\nu)\qw x^{2\nu/3-1}
\intoo e^{-x\qqa s} s^{\nu-5/2}f\bigpar{s\qqcw}\dd s.
  \end{split}
\end{equation}
Let 
\begin{equation}
  \label{phif}
\phif(s)\=s^{ \nu-5/2} f\bigpar{s\qqcw}.
\end{equation}
Then \eqref{a2a} can be written, with $x=y\qqc$,
\begin{equation}
  g\xpar{y\qqc}
=
\Gamma(\nu)\qw y^{\nu-3/2}\intoo e^{-y s} \phif(s)\dd s,
\qquad y>0.
\end{equation}
In other words, $\phif$ has the Laplace transform
\begin{equation}\label{tphif}
\tphif(y)
\=\intoo e^{-y s} \phif(s)\dd s
=
\Gamma(\nu) y^{3/2-\nu}  g\xpar{y\qqc},
\qquad y>0.
\end{equation}
Since this is finite for all $y>0$, the Laplace transform $\tphif$ is analytic
in the half-plane $\Re y>0$.
Hence, using our analytic exytension of $g$ to $|\arg z|<3\pi/4$,, 
\eqref{tphif} holds for all $y$ with
$\Re y>0$. 
Consequently, by standard Laplace inversion,
for every
$s>0$ and every
$\xi>0$ such that the integrals are absolutely 
(or even conditionally, see \refL{LLinv}) convergent,
\begin{equation}\label{phifinv}
  \begin{split}
  \phif(s)&=\frac{1}{2\pi\ii} \int_{\xi-\ii\infty}^{\xi+\ii\infty} 
e^{sy} \tphif(y) \dd y
=\frac{\Gamma(\nu)}{2\pi\ii} \int_{\xi-\ii\infty}^{\xi+\ii\infty} 
e^{sy}  y^{3/2-\nu}  g\xpar{y\qqc} \dd y.
  \end{split}
\end{equation}
We have for (mainly notational) simplicity assumed that $X$ has a
density. In general, we may replace the density function $f$ in
\eqref{a2a} and \eqref{phif} by a probability measure $\mu$ (with
suitable interpretations; we identify here absolutely continuous
measures and their densities as in the theory of distributions).
Then $\phif$ is a (positive) measure on $(0,\infty)$, and its Laplace
transform is still given by \eqref{tphif}.
The fact, proved below, that $\tphif$ is absolutely integrable on a
vertical line $\Re y=\xi$ implies by 
standard Fourier analysis 
that $\phif$ actually is the continuous function given by
\eqref{phifinv}, and thus the measure 
$\mu$ too is a continuous function; \ie, $X$ has a continuous density
$f$ as asserted, and 
\eqref{a2a} and \eqref{phif} hold.

We change variables in \eqref{phifinv}
to $\gt:=\arg y\in(-\pi/2,\pi/2)$, that is 
$y=\xi(1+\ii\tan\gt)=\xi\sec(\gt) e^{\ii\gt}$. 
We further 
express $g(y\qqc)$ by \eqref{g6} with $\phi=\gt/3$ (which satisfies
the conditions above for \eqref{g6}); 
this yields, assuming absolute convergence of
the double integral,
{\multlinegap=0pt
\begin{multline}\label{phifs}
\phif(s)
=\frac{3\Gamma(\nu)}{8\pi^2\ii}\int_{\gt=-\pi/2}^{\pi/2}\int_{r=0}^\infty
\exp\Bigpar{\xi s(1+\ii\tan\gt)-e^{\ii\gt}(\xi\sec\gt)\qqc r\qqcw}
\\
e^{(1-2\nu/3)\ii\gt}
\xi^{5/2-\nu}\xpar{\sec\gt}^{7/2-\nu} 
r^{\nu-5/2} \Hx\xpar{re^{\ii\gt/3}}
\dd r \dd\gt.
\end{multline}}
To verify absolute convergence of this double integral, take absolute
values inside the integral.
Since 
\begin{equation*}
 \Re\bigpar{e^{\ii\gt}(\xi\sec\gt)\qqc r\qqcw}
\ge \xi\qqc (\sec\gt)\qq r\qqcw,
\end{equation*}
the resulting integral is, using \eqref{Hxoox} and \eqref{Hx0},
for fixed $s$ and $\xi$ bounded by 
\begin{equation*}
\CC(s,\xi)
\int_{0}^{\infty}\int_{0}^\infty
e^{-\xi\qqc (\sec\gt)\qq r\qqcw}
(\sec\gt)^{7/2-\nu} 
r^{\nu-5/2} \min\bigpar{r^{-\nu},r^{-6}}
\dd r \dd \gt.
\end{equation*}
We split this double integral into the two parts: $0<\gt<1$ and
$1<\gt<\infty$. For $0<\gt<1$, $\sec\gt$ is bounded above and below,
and it is easy to see that the integral is finite. For $\gt>1$,
$\tan\gt<\sec\gt<2\tan\gt$, and with $t=\tan\gt$ we obtain at most 
\begin{equation*}
\CC
\int_{1}^{\infty}\int_{0}^\infty
e^{-\xi\qqc t\qq r\qqcw}
t^{3/2-\nu} 
r^{\nu-5/2} \min\bigpar{r^{-\nu},r^{-6}}
\dd r \dd t.
\end{equation*}
Substituting $t=r^3u$, we find that this is at most
\begin{equation*}
\CCx
\int_{0}^{\infty}
e^{-\xi\qqc u\qq}
u^{3/2-\nu} \dd u
\int_{0}^\infty
r^{5-2\nu} \min\bigpar{r^{-\nu},r^{-6}}
\dd r 
<\infty.
\end{equation*}
This verifies absolute convergence of the double integral in
\eqref{phifs} for every $\xi>0$, which implies absolute convergence of
the integrals in \eqref{phifinv}.
Consequently, 
\eqref{phifinv} and
\eqref{phifs} are valid for every $s>0$ and $\xi>0$.
We now put $s=x\qqaw$ in \eqref{phifs} and obtain by \eqref{phif} the
sought result \eqref{ff}.

\begin{remark}
We have chosen $\phi=\gt/3$, which leads to \eqref{ff} and, see
\refR{RTol}, the formulas by \citet{Tol:br,Tol:bm,Tol:brp}.
Other choices of $\phi$ are possible and lead to variations of the inversion
formula \eqref{ff}.
In particular, it may be noted that we may take $\phi=0$ for, say,
$|\gt|<1$; this yields a formula that, apart from a small
contribution
for $|\gt|>\pi/4$, involves $\Hx(x)$ for real $x$ only.
However, we do not find that this or any other variation of \eqref{ff}
simplifies the application of the saddle method, and we leave these
versions to the interested reader.
\end{remark}

\section{Moment asymptotics}\label{Smoments}

Suppose that $X$ is a positive random variable with a density function
$f$ satisfying \eqref{fas}.
Then, as \rtoo, using Stirling's formula,
\begin{equation}\label{m1}
  \begin{split}
\E X^r	
&\sim 
\intoo a x^{r+\ga}e^{-bx^2}\dd x
\\&
=
\frac a2\intoo  y^{(r+\ga+1)/2-1}e^{-by}\dd y
\\&
=
\frac a2b^{-(r+\ga+1)/2}\Gamma\parfrac{r+\ga+1}2
\\&
\sim
\frac a2b^{-(r+\ga+1)/2}\parfrac r2^{(\ga+1)/2}\Gamma\parfrac{r}2
\\&
=
a\sqrt{\pi}(2b)^{-(\ga+1)/2} r^{\ga/2}\parfrac{r}{2eb}^{r/2}.
  \end{split}
\end{equation}
(It is easily seen, by an integration by parts, that the same result
follows from the weaker assumption \eqref{Fas}.)

For the Brownian areas studied in this paper, Theorems
\ref{Tex}--\ref{Tbmp} thus imply the following.
\begin{corollary}
As \ntoo,
\begin{align}
\E \cBex^n&\sim 3\sqrt2\, n \parfrac{n}{12e}^{n/2} ,\label{exmom}
\\  
 \E \cBbr^n&\sim \sqrt2\ \parfrac{n}{12e}^{n/2}, \label{brmom}
\\  
 \E \cBbm^n&\sim \sqrt2\ \parfrac{n}{3e}^{n/2}, \label{bmmom}
\\  
 \E \cBme^n&\sim \sqrt{3\pi}n\qq \parfrac{n}{3e}^{n/2}, \label{memom}
\\  
 \E \cBdm^n&\sim 2\sqrt2\ \parfrac{n}{3e}^{n/2}, \label{dmmom}
\\
 \E \cBbrp^n&\sim \frac1{\sqrt2} \parfrac{n}{12e}^{n/2}, \label{brpmom}
\\
\E \cBbmp^n&\sim \frac1{\sqrt2} \parfrac{n}{3e}^{n/2}. \label{bmpmom}
\end{align}  
\end{corollary}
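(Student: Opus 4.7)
The plan is direct: formula \eqref{m1} already packages all the necessary work. Given that each of Theorems \ref{Tex}--\ref{Tbmp} provides density asymptotics of exactly the form \eqref{fas}, namely $f(x)\sim a x^\ga e^{-bx^2}$ with explicit constants $a$, $\ga$, $b$ depending on the particular Brownian area, I would simply identify $(a,\ga,b)$ for each of the seven processes and substitute into \eqref{m1}, then simplify the resulting numerical constants.

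Concretely, I would set up a small table of triples $(a,\ga,b)$: from \refT{Tex}, $(72\sqrt6/\sqrt\pi,\,2,\,6)$; from \refT{Tbr}, $(2\sqrt6/\sqrt\pi,\,0,\,6)$; from \refT{Tbm}, $(\sqrt6/\sqrt\pi,\,0,\,3/2)$; from \refT{Tme}, $(3\sqrt3,\,1,\,3/2)$; from \refT{Tdm}, $(2\sqrt6/\sqrt\pi,\,0,\,3/2)$; from \refT{Tbrp}, $(\sqrt6/\sqrt\pi,\,0,\,6)$; from \refT{Tbmp}, $(\sqrt3/\sqrt{2\pi},\,0,\,3/2)$. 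Inserting these into the final line of \eqref{m1}, the factor $(n/(2eb))^{n/2}$ produces either $(n/(12e))^{n/2}$ (when $b=6$) or $(n/(3e))^{n/2}$ (when $b=3/2$), so the only genuine verification is to check that the prefactor $a\sqrt\pi(2b)^{-(\ga+1)/2} n^{\ga/2}$ collapses to the stated constants. For example, for $\cBex$ one gets $(72\sqrt6/\sqrt\pi)\sqrt\pi\cdot 12\qqcw \cdot n = 72\sqrt6/(12\cdot 2\sqrt3)\,n = 3\sqrt2\,n$, matching \eqref{exmom}; the other six cases are analogous one-line reductions.

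The only step with any subtlety is justifying the application of \eqref{m1} itself. Since the derivation in \eqref{m1} assumes only that \eqref{fas} holds (pointwise at infinity), whereas $\E X^n$ is an integral over all of $(0,\infty)$, strictly speaking one needs a standard Laplace/saddle-point observation: for large $n$, $x^n f(x)$ is concentrated near $x_n^*\sim\sqrt{n/(2b)}\to\infty$, so the asymptotic formula \eqref{fas} is valid throughout the effective support and the contribution from any fixed bounded interval is exponentially negligible. The paper's parenthetical remark after \eqref{m1} already notes this (and observes that the weaker tail estimate \eqref{Fas} suffices via integration by parts), so no new work is required here.

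Since nothing beyond substitution and Stirling's formula is needed, I would not expect any genuine obstacle; the main risk is arithmetic slips when reducing the radicals in the prefactors, which can be guarded against by cross-checking each case against the tail estimate $\P(\cB>x)\sim(a/2b)x^{\ga-1}e^{-bx^2}$ from \eqref{Fas} rather than the density directly. The corollary is therefore essentially a bookkeeping consequence of Theorems \ref{Tex}--\ref{Tbmp} together with \eqref{m1}.
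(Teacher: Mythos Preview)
Your proposal is correct and follows exactly the paper's approach: the corollary is stated immediately after \eqref{m1} with the sentence ``Theorems \ref{Tex}--\ref{Tbmp} thus imply the following,'' and the proof is nothing more than reading off $(a,\ga,b)$ from each theorem and substituting into the last line of \eqref{m1}. Your table of constants is accurate, your sample computation for $\cBex$ is correct, and your remark about why the pointwise asymptotic \eqref{fas} suffices for the moment integral matches the paper's own parenthetical justification.
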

Most of these results 
have been found earlier:
\eqref{exmom} by \citet{Takacs:Bernoulli},
\eqref{brmom} by \citet{Takacs:walk} and \citet{Tol:br},
\eqref{bmmom} by \citet{Takacs:BM} and \citet{Tol:bm},
\eqref{memom} by \citet{Takacs:meander},
\eqref{dmmom} by \citet{SJ201},
\eqref{brpmom} by \citet{Tol:brp};
\Takacs{} used recursion formulas derived by other methods, while
Tolmatz used the method followed here.
Note that, as remarked by \citet{Tol:brp}, 
$\E \cBbrp^n\sim \half \E \cBbr^n$
and similarly
$\E \cBbmp^n\sim \half \E \cBbm^n$,
\cf{} \refR{R+}.

In the opposite direction, we do not know any way to get precise
asymptotics of the form \eqref{fas} or \eqref{Fas} from moment
asymptotics, but, as observed by \citet{CsorgoShiYor},
the much weaker
estimate \eqref{sofie} and its analogue for other Brownian areas
can be obtained by the following special case of results
by Davies \cite{Davies} and Kasahara \cite{Kasahara}.
(See \cite[Theorem 4.5]{SJ161} for a more general version with an
arbitrary power $x^p$ instead of $x^2$ in the exponent.)

\begin{proposition}
  If $X$ is a positive random variable and $b>0$, then the following
  are equivalent:
  \begin{align*}
-\ln\P(X>x)&\sim b x^2, && \xtoo,
\\
\bigpar{\E X^n}^{1/n} &\sim \sqrt{\frac{n}{2eb}}, && \ntoo,
\\
\ln\bigpar{\E e^{tX}} &\sim \frac1{4b}t^2, && \ttoo.
  \end{align*}
\end{proposition}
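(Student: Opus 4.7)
My plan is to prove the equivalences by establishing the cycle (i) $\Rightarrow$ (ii) $\Rightarrow$ (iii) $\Rightarrow$ (i), which splits the work into three self-contained pieces of different character: Laplace-type moment asymptotics, a saddle-point evaluation of a power series, and a classical exponential Tauberian argument.

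For (i) $\Rightarrow$ (ii), I would work from the representation $\E X^n=\int_0^\infty nx^{n-1}\P(X>x)\dd x$. Under hypothesis (i), the logarithm of the integrand is $n\log x-bx^2(1+o(1))+O(\log x)$, which as a function of $x$ is maximized near $x_n\=\sqrt{n/(2b)}$. Splitting the integration range into $(0,(1-\eps)x_n]$, $[(1-\eps)x_n,(1+\eps)x_n]$, and $[(1+\eps)x_n,\infty)$, the outer pieces contribute $\exp\bigpar{\tfrac n2\log\tfrac n{2eb}-cn\eps^2+o(n)}$ and the central piece contributes a factor $e^{o(n)}$ times the maximum value; taking $\eps\to0$ slowly yields $\log\E X^n=\tfrac n2\log\tfrac n{2eb}+o(n)$, which is (ii). This parallels the calculation \eqref{m1} but must be carried out with only the logarithmic form of the hypothesis.

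For (ii) $\Rightarrow$ (iii), I would expand $\E e^{tX}=\sum_{n\ge0}t^n\E X^n/n!$. Using (ii) and Stirling's formula, the general term is $\exp\bigpar{n\log t+\tfrac n2\log\tfrac n{2eb}-n\log n+n+o(n)}$. Setting the derivative with respect to $n$ to zero gives a saddle at $n^*(t)\=t^2/(2b)$, at which the exponent equals $t^2/(4b)+o(t^2)$. Standard estimates showing that the sum is dominated by terms $n$ within, say, $t^{3/2}$ of $n^*(t)$ then yield $\log\E e^{tX}\sim t^2/(4b)$, which is (iii).

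For (iii) $\Rightarrow$ (i), the upper bound comes from the Chernoff inequality $\P(X>x)\le e^{-tx}\E e^{tX}$; optimizing over $t$ via (iii) with $t=2bx$ gives $-\log\P(X>x)\ge bx^2(1+o(1))$. The matching lower bound is the main obstacle: one wants to exponentially tilt the distribution by $\dd\tilde\P_t/\dd\P=e^{tX-\log\E e^{tX}}$, showing that under $\tilde\P_t$ with $t=2bx$, the variable $X$ concentrates on a $O(1)$-window around $x$, so that $\tilde\P_t(X\ge x)\ge c>0$ and undoing the tilt recovers $\P(X\ge x)\ge c\, e^{-tx+\log\E e^{tX}}\sim e^{-bx^2}$. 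The difficulty is that (iii) is only a first-order statement about $\log\E e^{tX}$, giving no direct control on $\tfrac{\dd^2}{\dd t^2}\log\E e^{tX}$, which the concentration argument would naively require. The clean way around this, and the contribution of Davies \cite{Davies} and Kasahara \cite{Kasahara}, is to invoke the exponential Tauberian theorem (of de Bruijn type) for $\log\E e^{tX}$, which supplies the lower bound using only the first-order asymptotics of (iii) by a convexity/regular-variation argument rather than by a direct tilt. Invoking this Tauberian theorem as a black box then closes the cycle.
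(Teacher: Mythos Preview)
The paper does not actually prove this proposition; it is stated without proof as a special case of results by Davies \cite{Davies} and Kasahara \cite{Kasahara}, with a pointer to a more general version in \cite[Theorem 4.5]{SJ161}. Your sketch therefore goes well beyond what the paper does. The cycle (i)$\Rightarrow$(ii)$\Rightarrow$(iii)$\Rightarrow$(i) you outline is the natural one, and the first two implications are genuine Laplace/saddle-point arguments that you carry out in enough detail to be convincing (the usual care is needed because the $o(1)$ terms in (i) and (ii) are not a priori uniform, but your splitting of the integration/summation range handles this). For (iii)$\Rightarrow$(i), your diagnosis is exactly right: the Chernoff upper bound is immediate, but the matching lower bound cannot be extracted from a naive tilt without second-order control on $\log\E e^{tX}$, and the de~Bruijn--Kasahara exponential Tauberian theorem is precisely the device that supplies it from first-order information alone. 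Since that theorem is essentially the content of the Kasahara reference itself, your proof of the hardest direction is, like the paper's treatment of the whole proposition, ultimately a citation---but you have correctly isolated where the real difficulty lies and why the black box is unavoidable at this level of generality.
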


Returning to \eqref{m1}, we obtain in the same way more precise
asymptotics for the moments if we are given an asymptotic series for
$f$ or $\P(X>x)$. For simplicity, we consider only the next term, but
the calculations can be extended to an asymptotic expansion with any
number of terms. Thus, suppose that, as for the Brownian areas, 
\eqref{fas} is sharpened to \eqref{fasx} with $N\ge2$. Then, also
using further terms in Stirling's formula,
\begin{equation*}
  \begin{split}
\E X^n	
&=
\frac {a_0}2b^{-(n+\ga+1)/2}\Gamma\parfrac{n+\ga+1}2
+
\frac {a_2}2b^{-(n+\ga-1)/2}\Gamma\parfrac{n+\ga-1}2
\\
&\hskip16em
+O\lrpar{b^{-n/2}\Gamma\parfrac{n+\ga-3}2}
\\
&=
\frac {1}2b^{-(n+\ga+1)/2}\Gamma\parfrac{n+\ga+1}2
\lrpar{a_0+a_2b\frac2{n}+O(n\qww)}
\\&
=
\sqrt{2\pi}(2b)^{-(\ga+1)/2} n^{\ga/2}\parfrac{n}{2eb}^{n/2}
\\ &\hskip8em
\cdot
\lrpar{a_0+\lrpar{a_0\frac{\ga^2-1}{4}
+\frac{a_0}6
+{2a_2b}}n\qw
+O(n\qww)}.
  \end{split}
\end{equation*}
In particular, for the Brownian excursion, where by \refT{Tex}
\eqref{fasx} holds with $\ga=2$, $b=6$, $a_0=72\sqrt{6/\pi}$
and $a_2=-8\sqrt{6/\pi}$,
\begin{equation}\label{bex1}
  \begin{split}
\E \cBex^n	
&
=\frac1{2\sqrt2} \parfrac{n}{12e}^{n/2}n
\lrpar{12+\frac{9+2-16}n
+O(n\qww)}
\\&
=3\sqrtt \parfrac{n}{12e}^{n/2}n
\lrpar{1-\frac{5}{12n}
+O(n\qww)}.
  \end{split}
\end{equation}
If we, following \citet{Takacs:Bernoulli}, introduce $K_n$ defined by
\begin{equation*}
  \E \cBex^n =\frac{4\sqrt{\pi}\,2^{-n/2}n!}{\Gamma((3n-1)/2)}K_n,
\end{equation*}
further applications of Stirling's formula shows that \eqref{bex1} is
equivalent to 
\begin{equation}\label{Kex}
K_n ={(2\pi)}\qqw n\qqw\parfrac{3n}{4e}^{n} \lrpar{1-\frac7{36n}+O(n\qww)}.
\end{equation}
Again, the leading term is given by \citet{Takacs:Bernoulli}, in the
equivalent form
\begin{equation}\label{Ksim}
K_n \sim \frac1{2\pi}\parfrac{3}{4}^{n} (n-1)!
\qquad \text{as } \ntoo.
\end{equation}
\citet{Takacs:Bernoulli} further gave the recursion formula
(with $K_0=-1/2$)
\begin{equation}\label{K-rec}
K_n
=\frac{3n-4}{4}K_{n-1}
  +\sum_{j=1}^{n-1}K_jK_{n-j},
\qquad n\ge1,
\end{equation}
It is easy to obtain from \eqref{Ksim} and \eqref{K-rec} the refined
asymptotics 
\begin{equation}\label{Kexx}
K_n = \frac1{2\pi}\parfrac{3}{4}^{n} (n-1)!
\lrpar{1-\frac5{18n}+O(n\qww)},
\end{equation}
which is equivalent to \eqref{Kex}.
and, by recursion, \eqref{Kexx} can be extended to an asymptotic expansion of
arbitrary length.
(Another method to obtain an asymptotic expansion of $K_n$ is given by
\citet{KearneyMM}.) 
Hence \eqref{bex1} (also with further terms) can, alternatively, be
derived from \eqref{Ksim} and \eqref{K-rec} by straightforward
calculations. However, as said above, we do not know any way to derive
\refT{Tex}
from this.
(Nevertheless, the calculations above serve as a check of the
coefficients in \refT{Tex}.)


\begin{acks}
We thank Philippe Flajolet for interesting discussions.
This research was partly done during a visit by SJ to the University of
Cambridge, partly funded by Trinity College. 
\end{acks}

\appendix

\section{Proof that $\sqrt{z}\Ai(z)-\Ai'(z)$ has no zeros}\label{Azeros}

The double Laplace transforms for the positive part areas $\cB\brp$ and
$\cB\bmp$ have both the denominator 
$\sqrt{z}\Ai(z)-\Ai'(z)$, 
and it is important that this function has no zeros, whence $\Hx\brp$
and $\Hx\bmp$ are analytic in \slitplane.
This was proved by \citet{Tol:brp} (for the same reason), but
we give here an alternative proof that does not need the careful
numerical integration done by Tolmatz. 
(Our proof is, like Tolmatz', based on the argument principle.) 

\begin{lemma}[\citet{Tol:brp}]
  The function $\sqrt{z}\Ai(z)-\Ai'(z)$ is non-zero for all $z=r
  e^{\ii\gt}$ with $r\ge0$ and $|\gt|\le\pi$.
\end{lemma}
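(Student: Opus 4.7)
The plan is to apply the argument principle after the substitution $w=\sqrt z$ (principal branch), which maps the slit plane $\bbC\setminus(-\infty,0]$ biholomorphically onto the open right half-plane $\{\Re w>0\}$. Under this change of variable, $\sqrt z\,\Ai(z)-\Ai'(z)$ becomes the entire function $F(w)\=w\Ai(w^2)-\Ai'(w^2)$, and the lemma reduces to the assertion that $F$ has no zero in the closed right half-plane $\{\Re w\ge 0\}$, with the two sides of the branch cut in the original picture corresponding to the two halves of the imaginary axis.

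I would first dispose of the boundary. For $w=\ii t$ with $t\in\bbR$ we have $F(\ii t)=\ii t\,\Ai(-t^2)-\Ai'(-t^2)$ with $\Ai(-t^2),\Ai'(-t^2)\in\bbR$, so $F(\ii t)=0$ would force $\Ai(-t^2)=\Ai'(-t^2)=0$, which is impossible since any nontrivial solution of Airy's ODE is determined by its value and derivative at a single point. Also $F(0)=-\Ai'(0)>0$. Next I would apply the argument principle to the right half-disc $D_R=\{|w|\le R,\,\Re w\ge 0\}$ traversed counterclockwise and let $R\to\infty$. On the semicircular portion $w=Re^{\ii\phi}$, $\phi\in[-\pi/2,\pi/2]$, the standard Airy asymptotics $\Ai(z)\sim(2\sqrt\pi)\qw z\qqqqw e^{-\zeta}$ and $\Ai'(z)\sim-(2\sqrt\pi)\qw z\qqqq e^{-\zeta}$ with $\zeta=\tfrac23 z\qqc$, uniform in $|\arg z|\le\pi-\gd$, yield (after a pleasant cancellation in which both $\sqrt z\,\Ai$ and $-\Ai'$ contribute the same leading term)
\[
F(w)\sim\pi\qqw\,w\qq\,e^{-2w^3/3},
\]
which gives an argument change of $\pi/2+\tfrac 43 R^3+o(1)$ along this portion. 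On the imaginary segment, the oscillatory real-variable asymptotics $\Ai(-r)\sim\pi\qqw r\qqqqw\sin(\tfrac23 r\qqc+\pi/4)$ and $\Ai'(-r)\sim-\pi\qqw r\qqqq\cos(\tfrac23 r\qqc+\pi/4)$ combine, via $\cos\ga+\ii\sin\ga=e^{\ii\ga}$, into
\[
F(\ii t)\sim\pi\qqw|t|\qq\,e^{\ii\,\sign(t)\xpar{\tfrac23|t|^3+\pi/4}},
\]
giving an argument change of $-\pi/2-\tfrac43 R^3+o(1)$ along the segment from $\ii R$ down to $-\ii R$. The two contributions cancel to $o(1)$; since the winding number must be an integer, it equals $0$ for all sufficiently large $R$, proving that $F$ has no zero in the interior of $D_R$ and hence in all of $\{\Re w>0\}$.

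The main obstacle is controlling $F$ near the corners $w=\pm\ii R$, where the contour passes from a sector in which the standard Airy asymptotic on $|\arg z|\le\pi-\gd$ is uniform into the regime $\arg(w^2)=\pm\pi$ lying exactly on the branch cut of $z\qqc$. The cleanest remedy is to indent the contour inward at each corner, replacing a tiny sub-arc of angular width $\gd$ on the semicircle by a short radial piece inside $|\arg w|\le\pi/2-\gd$, and to choose $\gd=\gd(R)\downarrow 0$ slowly relative to $R\to\infty$; one then shows that the contribution of each indentation to $\gD\arg F$ is $o(1)$ using the fact that $F$ is bounded below in modulus on these arcs (the interior asymptotic and the imaginary-axis computation agree up to multiplicative constants at the matching points). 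Alternatively, one may invoke the Airy connection formula $\Ai(z)=e^{\ii\pi/3}\Ai(e^{-2\pi\ii/3}z)+e^{-\ii\pi/3}\Ai(e^{2\pi\ii/3}z)$ to transport the asymptotic uniformly across $\arg z=\pm\pi$ and dispense with the indentation altogether.
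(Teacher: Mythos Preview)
Your substitution $w=\sqrt z$ is exactly the reformulation the paper itself suggests in parentheses, and the overall strategy---argument principle on a half-disc---is the same as the paper's. The paper additionally multiplies by $e^{\zeta}$ so that the comparison function becomes $z^{1/4}$ rather than $z^{1/4}e^{-\zeta}$, which removes the large oscillating term $\tfrac43 R^3$ from both pieces and makes the bookkeeping cleaner; but that is a convenience, not an essential difference.

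There is, however, a genuine gap, and it is not at the corners you flag. On the imaginary segment you write that the asymptotic $F(\ii t)\sim\pi^{-1/2}|t|^{1/2}e^{\ii\,\sign(t)(\tfrac23|t|^3+\pi/4)}$ ``gives an argument change of $-\pi/2-\tfrac43 R^3+o(1)$''. But this asymptotic is valid only for $|t|$ large; subtracting the endpoint arguments tells you the change only modulo $2\pi$. To pin down the continuous increment you must trace $\arg F(\ii t)$ through $|t|$ small, where the asymptotic says nothing. This is exactly where the paper does real work: it observes that $\Re F(\ii t)=-\Ai'(-t^2)$ and $\Im F(\ii t)=t\,\Ai(-t^2)$ vanish alternately at $t=\sqrt{|a_k'|}$ and $t=\sqrt{|a_k|}$ (by the interlacing of Airy zeros), so the continuous argument hits exactly $k\pi$ at $t=\sqrt{|a_k|}$; it then invokes the Airy-zero asymptotic $\tfrac23|a_k|^{3/2}=k\pi-\tfrac\pi4+O(k^{-1})$ to match this count with the large-$|t|$ formula, and takes $R=\sqrt{|a_k|}$ so that the match is exact there. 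Without this step (or an equivalent one) your computation on the diameter is unjustified: you have not ruled out an extra $2\pi n$ accumulated as $t$ passes through the bounded region near $0$.

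So the ``main obstacle'' is not the corners $w=\pm\ii R$ (those are handled, as the paper shows and as you sketch, by extending the Airy asymptotics across $\arg z=\pm\pi$ via the connection formula); it is the middle of the diameter, and your proposal currently skips it.
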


\begin{proof}
We use the notations
\begin{align*}
\zeta(z)&\=\tfrac23 z\qqc,
\\
  f(z)&\=\sqrt{\pi}\lrpar{\sqrt{z}\Ai(z)-\Ai'(z)},
\\
g(z)&\=e^{\zeta(z)}f(z).
\end{align*}
Note that these functions are analytic in \slitplane{} and extend 
continuously to $(-\infty,0]$ from each side, so we can regard them as
  continuous functions of $re^{\ii\gt}$ with $r\ge0$ and $-\pi\le
  r\le\pi$, where we regard the two sides $re^{\pm\ii\pi}=-r\pm\ii0$
of the negative real axis as different. (The reader that dislikes this
can reformulate the proof and study $z\Ai(z^2)-\Ai'(z^2)$ for $\Re
z\ge0$; this avoids the ambiguities of square roots.)

  We will use the argument principle on $g(z)$ and the contour
  $\gam_R$ consisting of the interval from $0$ to $-R-\ii0$ along the
  lower side of the negative real axis, the circle $Re^{\ii\gt}$ for
  $-\pi\le\gt\le\pi$ and the interval from $-R+\ii0$ back to 0, where $R$
  is a large real number.

First, fix a small $\gd>0$. By \eqref{Aioo} and \eqref{Aiioo}, as
$|z|\to\infty$, 
\begin{equation}
  \label{f1}
f(z)\sim z\qqqq e^{-\zeta(z)},
\qquad \aaz\le\pi-\gd.
\end{equation}

Next, assume $0<\arg z<2\pi/3-\gd$. Note that then
$\arg(-z)=\arg(z)-\pi\in(-\pi,-\pi/3-\gd)$ and thus 
\begin{equation}
  \label{zeta-}
(-z)\qq=-\ii z\qq,\qquad
\zeta(-z)=e^{-(3/2)\ii\pi}\zeta(z)=\ii\zeta(z).
\end{equation}
Furthermore, we have 
as $|z|\to\infty$ with $\aaz<2\pi/3-\gd$
the expansions
\cite[10.4.60,10.4.62]{AS}
\begin{align*}
\Ai(-z)&= 
{\pi^{-1/2}}z^{-1/4}
\Bigpar{\sin\bigpar{\zeta(z)+\frac\pi4}\bigpar{1+O(\zeta\qww)}
-\cos\bigpar{\zeta(z)+\frac\pi4}\cdot O(\zeta\qw)
}
\\
\Ai'(-z)&= 
{\pi^{-1/2}}z^{1/4}
\Bigpar{-\cos\bigpar{\zeta(z)+\frac\pi4}\bigpar{1+O(\zeta\qww)}
+\cos\bigpar{\zeta(z)+\frac\pi4}\cdot O(\zeta\qw)
}
\end{align*}
and thus
\begin{align*}
f(-z)&= 
z^{1/4}
\Bigpar{\cos\bigpar{\zeta(z)+\frac\pi4}\bigpar{1+O(\zeta\qw)}
-\ii\sin\bigpar{\zeta(z)+\frac\pi4}\bigpar{1+O(\zeta\qw)}
}.
\end{align*}
In the range $\arg z\in(0,2\pi/3-\gd)$, further $\Im\zeta(z)>0$, and
thus by Euler's formulas
\begin{equation*}
  \lrabs{\cos\bigpar{\zeta(z)+\frac\pi4}}
+
\lrabs{\sin\bigpar{\zeta(z)+\frac\pi4}}
\le \lrabs{e^{\ii\zeta(z)}} + \lrabs{e^{-\ii\zeta(z)}}
\le 2 e^{\Im\zeta(z)}
=
2\lrabs{e^{-\ii\zeta(z)}}.
\end{equation*}
Hence, as $|z|\to\infty$ with
$\arg z\in(0,2\pi/3-\gd)$, 
using \eqref{zeta-}.
\begin{align*}
f(-z)&=
z^{1/4}
e^{-\ii(\zeta(z)+\xfrac\pi4)}\bigpar{1+O(\zeta\qw)}
\sim
(-z)\qqqq e^{-\zeta(-z)}.
\end{align*}
Consequently, \eqref{f1} holds as \ztoo{} with 
$-\pi<\arg z<-\pi/3-\gd$ too. Since $f(\overline z)=\overline{f(z)}$, it
holds for $\pi/3+\gd<\arg z<\pi$ too, and combining the three ranges,
we see that as \ztoo, for all $\aaz<\pi$,
\begin{equation}
  f(z)\sim z\qqqq e^{-\zeta(z)},
\end{equation}
and thus
\begin{equation}\label{gr}
  g(z)= z\qqqq\bigpar{1+o(1)}.
\end{equation}

Consider now $f(z)$ on the lower side of the negative real axis, \ie{}
for $z=r e^{-\ii\pi}=-r-\ii0$, $r\ge0$.
Note that then $\Ai(z)$ and $\Ai'(z)$ are real and $z\qq$ purely
imaginary.
Since $\Ai$ and $\Ai'$ have no common zeros, and $\Ai'(0)\neq0$,
$f(-r-\ii0)\neq0$. Moreover, $f(0)>0$, and as $r$ grows from 0 to
$\infty$,
$f(-r-\ii0)$ is real at $r=0$ and at the zeros $-r=a_k$ of $\Ai$, and
imaginary at the zeros $-r=a_k'$ of $\Ai'$. 
Consider continuous determinations of $\arg f(z)$ and $\arg g(z)$
along the neagtive
real axis,
starting with $\arg f(0)=\arg g(z)=0$.
It is easily seen that $\arg f(z)$ then is $-\pi/2$ for $z=a_1'$,
$-\pi$ for $z=a_1$, and so on, with $\arg f(a_k-\ii0)=-k\pi$.
Furthermore, for $z=-r-\ii0$,
\begin{equation*}
  \arg g(z)=\arg f(z)+\Im \zeta(z)=\arg f(z)+\tfrac23\Im z\qqc
=\arg f(z)+\tfrac23 r\qqc.
\end{equation*}
In particular, using the asymptotic formula \cite[10.4.94]{AS} for the
Airy zeros $a_k$,
\begin{equation}\label{g0-}
  \begin{split}
  \arg g(a_k)
&=-\pi k+\tfrac23 |a_k|\qqc
=-\pi k+\frac23\,\frac{3\pi(4k-1)}{8}\bigpar{1+O(k\qww)}
\\&
=-\frac{\pi}{4}+O(k\qw).	
  \end{split}
\end{equation}

Consider now a continuous determination of
$\arg g(z)$ along the contour $\gam_R$, with $R=|a_k|$
for a large $k$. On the part from $0$ to $-R-\ii0$, the argument decreases
by $-\pi/4+O(k\qw)$ by \eqref{g0-}, and on the half-circle from
$-R-\ii0$ to $R$, it increases by \eqref{gr} by $\pi/4+o(1)$, so the
total change from 0 to $R$
is $o(1)$, \ie, tends to 0 as $k\to\infty$. Since furthermore
$g(R)>0$, the change is a multiple of $2\pi$, and thus exactly 0 for
large $k$. By symmetry, the change of the argument on the remaining
half of $\gam_R$ is the same, so the total change along $\gam_R$ is 0,
which proves that $g(z)$ has no zero inside $\gam_R$ for $R=|a_k|$
with $k$ large. Letting $k\to\infty$, we see that $g(z)$ has no zeros.
\end{proof}

\newcommand\AAP{\emph{Adv. Appl. Probab.} }
\newcommand\JAP{\emph{J. Appl. Probab.} }
\newcommand\JAMS{\emph{J. \AMS} }
\newcommand\MAMS{\emph{Memoirs \AMS} }
\newcommand\PAMS{\emph{Proc. \AMS} }
\newcommand\TAMS{\emph{Trans. \AMS} }
\newcommand\AnnMS{\emph{Ann. Math. Statist.} }
\newcommand\AnnPr{\emph{Ann. Probab.} }
\newcommand\CPC{\emph{Combin. Probab. Comput.} }
\newcommand\JMAA{\emph{J. Math. Anal. Appl.} }
\newcommand\RSA{\emph{Random Struct. Alg.} }
\newcommand\ZW{\emph{Z. Wahrsch. Verw. Gebiete} }
\newcommand\DMTCS{\jour{Discr. Math. Theor. Comput. Sci.} }

\newcommand\AMS{Amer. Math. Soc.}
\newcommand\Springer{Springer-Verlag}
\newcommand\Wiley{Wiley}

\newcommand\vol{\textbf}
\newcommand\jour{\emph}
\newcommand\book{\emph}
\newcommand\inbook{\emph}
\def\no#1#2,{\unskip#2, no. #1,} 

\newcommand\webcite[1]{\hfil\penalty0\texttt{\def~{\~{}}#1}\hfill\hfill}
\newcommand\webcitesvante{\webcite{http://www.math.uu.se/\~{}svante/papers/}}
\newcommand\arxiv[2][\relax]{\webcite{arXiv:#2 \testtom{#1}}}
\newcommand\testtom[1]{%
\def\xxa{#1\relax}\def\xxb{\relax}\ifx\xxa\xxb \else [#1]\fi}

\def\nobibitem#1\par{}


\begin{thebibliography}{99}


\bibitem{AS}
M. Abramowitz \& I. A. Stegun, eds.,
\book{Handbook of Mathematical Functions}.
Dover, New York, 1972.

\nobibitem{Aldous}
D. Aldous,
Brownian excursions, critical random graphs and the multiplicative
coalescent. 
\emph{Ann. Probab.}
\vol{25} (1997),
812--854.

\nobibitem{BagDm}
G. N. Bagaev \& E. F. Dmitriev, 
Enumeration of connected labeled bipartite graphs. (Russian.) 
\jour{Doklady Akad. Nauk BSSR} \vol{28} (1984),
1061--1063.


\nobibitem{BenderCanfieldMcKay}
E. A. Bender, E. R. Canfield \&  B. D. McKay,
Asymptotic properties of labeled connected graphs.
\RSA \vol3\no2 (1992),  183--202.

\bibitem[Bleistein, Handelsman 1986]{BH86}
N. Bleistein  \& R. A. Handelsman,
\book{Asymptotic expansions of Integrals.} 
Dover Publications, 1986.





\nobibitem{Cifarelli}
D. M. Cifarelli, 
Contributi intorno ad un test per l'omogeneit\`a tra due campioni.  
\jour{Giorn. Econom. Ann. Econom. (N.S.)}  \vol{34}\no{3--4}  (1975), 
233--249.

\bibitem[Cs\"org{\H o}, Shi and Yor(1999)]{CsorgoShiYor}
M. Cs\"org{\H o}, Z. Shi \& M. Yor,
Some asymptotic properties of the local time of the uniform empirical
process.  
\jour{Bernoulli}  \vol{5}  (1999),  no. 6, 1035--1058. 

\bibitem{Davies}
L. Davies. 
Tail probabilities for positive random variables with entire
characteristic functions of very regular growth. 
\jour{Z. Angew. Math. Mech.} \vol{56} (1976), no. 3, T334--T336.

\bibitem{Fatalov}
V. R. Fatalov,  
Asymptotics of large deviations of Gaussian processes
of Wiener type for $L\sp p$-functionals, $p>0$, and the hypergeometric
function. (Russian)  
\jour{Mat. Sb.}  \vol{194}  (2003),  no. 3, 61--82;
English translation \jour{Sb. Math.}  \vol{194}  (2003),  no. 3-4, 369--390.

\bibitem{SJ197}
J. A. Fill \& S. Janson,
Precise logarithmic asymptotics for the right tails of some limit
  random variables for random trees.
Preprint, 2007.
\arxiv[math.PR]{math/0701259v1}

\bibitem{FL:Airy}
P. Flajolet \& G. Louchard,
Analytic variations on the Airy distribution. 
\jour{Algorithmica} \vol{31} (2001),
361--377.

\nobibitem{FPV} 
P. Flajolet, P. Poblete \& A. Viola,
On the analysis of linear probing hashing.
\jour{Algorithmica} \vol{22}\no4 (1998), 
490--515.

\nobibitem{Groeneboom}
P. Groeneboom, 
Brownian motion with a parabolic drift and Airy functions.  
\jour{Probab. Theory Related Fields}  \vol{81}  (1989),  no. 1, 79--109. 

\nobibitem{SJGauss}
S. Janson,
\book{Gaussian Hilbert Spaces}.
Cambridge Univ. Press, Cambridge, 1997.

\nobibitem{SJ146}
S. Janson,
The Wiener index of simply generated random trees.
\RSA \vol{22}\no4 (2003), 337--358. 

\bibitem{SJ161}
S. Janson \& P. Chassaing,
The center of mass of the ISE and the Wiener index of trees.
\emph{Electronic Comm. Probab.} \vol{9} (2004), paper 20, 178--187.

\bibitem[Janson(2007)]{SJ201}
S. Janson.
Brownian excursion area,
Wright's constants in graph enumeration, 
and other Brownian areas.
\jour{Probability Surveys}
\vol{4} (2007), 80--145.

\nobibitem{JohnsonKilleen}
B. McK. Johnson \& T. Killeen,  
An explicit formula for the C.D.F. of the $L\sb{1}$ norm of the
Brownian bridge.  
\jour{Ann. Probab.}  \vol{11}   (1983),  no. 3, 807--808. 

\bibitem{Kac46}
M. Kac,
On the average of a certain Wiener functional and a related limit
theorem in calculus of probability.  
\jour{Trans. Amer. Math. Soc.}  \vol{59} (1946), 401--414. 

\nobibitem{Kac49}
M. Kac,
On distributions of certain Wiener functionals.  
\jour{Trans. Amer. Math. Soc.}  \vol{65} (1949), 1--13. 

\nobibitem{Kac51}
M. Kac,
On some connections between probability theory and differential and
integral equations. 
\inbook{Proceedings of the Second Berkeley Symposium on Mathematical
  Statistics and Probability, 1950},  
University of California Press, Berkeley and Los Angeles, 1951,
pp. 189--215.

\bibitem{Kasahara}
Y. Kasahara. 
Tauberian theorems of exponential type.  
\jour{J. Math. Kyoto Univ.}  \vol{18}  (1978), no. 2, 209--219.

\bibitem[Kearney, Majumdar and Martin(2007+)]{KearneyMM}
M. J. Kearney, S. N. Majumdar \& R. J. Martin,
The first-passage area for drifted Brownian motion and the moments of
the Airy distribution.
Preprint, 2007.
\arxiv[cond-mat.stat-mech]{0706.2038v1}

\nobibitem{Lebedev}
N. N. Lebedev,
\emph{Special Functions and their Applications}.
Dover, New York, 1972. (Translated from Russian.)



\bibitem{Lou:kac}
G. Louchard,
Kac's formula, L\'evy's local time and Brownian excursion. 
\emph{J. Appl. Probab.} \vol{21}\no3 (1984),
479--499.

\bibitem{Lou:ex}
G. Louchard,
The Brownian excursion area: a numerical analysis. 
\emph{Comput. Math. Appl.} \vol{10}\no6 (1984),
413--417.
Erratum:
\emph{Comput. Math. Appl. Part A} \vol{12}\no3 (1986), 
375.

\bibitem[Majumdar and Comtet(2005)]{MC:Airy} 
S. N. Majumdar \& A. Comtet,  
Airy distribution function: from the area under a Brownian excursion
to the maximal height of fluctuating interfaces.  
\jour{J. Stat. Phys.}  \vol{119}  (2005),  no. 3-4, 777--826.

\bibitem[Milnor(1963)]{Milnor}
J. Milnor,
\book{Morse theory.} 
Princeton University Press, Princeton, N.J., 1963.

\bibitem{PermanW}
M. Perman \& J. A. Wellner,
On the distribution of Brownian areas.  
\jour{Ann. Appl. Probab.}  \vol6  (1996),  no. 4, 1091--1111.

\bibitem[Revuz and Yor(1999)]{RY}
D. Revuz \& M. Yor,
\book{Continuous martingales and Brownian motion}. 
3rd edition. 
\Springer, Berlin, 1999. 


\nobibitem{Rice}
S. O. Rice, 
The integral of the absolute value of the pinned Wiener
process---calculation of its probability density by numerical
integration.  
\jour{Ann. Probab.}  \vol{10}  (1982), no. 1, 240--243. 

\nobibitem{Richard}
C. Richard,
On $q$-functional equations and excursion moments.
Preprint, 2005.
\arxiv{math.CO/0503198}

\nobibitem{Shepp}
L. A. Shepp, 
On the integral of the absolute value of the pinned Wiener process.  
\jour{Ann. Probab.}  \vol{10} \no1  (1982), 234--239.

\nobibitem{Spencer}
J. Spencer, 
Enumerating graphs and Brownian motion.  
\jour{Comm. Pure Appl. Math.}  \vol{50}\no3  (1997),  291--294. 

\bibitem[\Takacs(1991)]{Takacs:Bernoulli}
L. \Takacs,
A Bernoulli excursion and its various applications. 
\emph{Adv. Appl. Probab.} \vol{23}\no3 (1991), 
557--585.

\nobibitem{Takacs:rail}
L. \Takacs, 
On a probability problem connected with railway traffic.
\emph{J. Appl. Math. Stochastic Anal.} \vol4 \no1 (1991),
1--27. 

\nobibitem{Takacs:cond}
L. \Takacs, 
Conditional limit theorems for branching processes. 
\emph{J. Appl. Math. Stochastic Anal.} \vol4 \no4 (1991), 263--292. 

\bibitem[\Takacs(1992)]{Takacs:walk}
L. \Takacs, 
Random walk processes and their applications to order statistics.
\emph{Ann. Appl. Probab.} \vol2 \no2 (1992), 435--459.

\nobibitem{Takacs:binary}
L. \Takacs, 
On the total heights of random rooted binary trees. 
\emph{J. Combin. Theory Ser. B} \vol{61}\no2 (1994), 155--166.

\bibitem[\Takacs(1993)]{Takacs:BM}
L. \Takacs, 
On the distribution of the integral of the absolute value of the
Brownian motion.  
\emph{Ann. Appl. Probab.} \vol3\no1 (1993), 186--197.

\bibitem[\Takacs(1995)]{Takacs:meander}
L. \Takacs, 
Limit distributions for the Bernoulli meander. 
\jour{J. Appl. Probab.} \vol{32} (1995), no. 2, 375--395. 

\bibitem[Tolmatz(2000)]{Tol:br}
L. Tolmatz, 
Asymptotics of the distribution of the integral of the
absolute value of the Brownian bridge for large
arguments. 
\jour{Ann. Probab.} \vol{28} (2000), no. 1, 132--139.

\bibitem[Tolmatz(2003)]{Tol:bm}
L. Tolmatz, 
The saddle point method for the integral of the
absolute value of the Brownian motion. 
\inbook{Discrete random walks (Paris,
2003),  
Discrete Math. Theor. Comput. Sci. Proc.} \vol{AC},  Nancy, 2003, 
pp. 309--324.

\bibitem[Tolmatz(2005)]{Tol:brp}
L. Tolmatz, 
Asymptotics of the distribution of the integral of the positive part
of the Brownian bridge for large arguments.  
\jour{J. Math. Anal. Appl.}  \vol{304}
(2005),  no. 2, 668--682. 


\nobibitem{Vervaat} 
W. Vervaat, 
A relation between Brownian bridge and Brownian excursion.
\jour{Ann. Probab.} \vol7\no1 (1979),  143--149.

\nobibitem{Voblyi}
V. A. \Voblyi, 
O koeffitsientakh Ra{\u\i}ta i Stepanova-Ra{\u\i}ta.
\jour{Matematicheskie Zametki} \vol{42} (1987), 854--862. 
English translation:
Wright and Stepanov-Wright coefficients. 
\jour{Mathematical Notes} \vol{42} (1987), 969--974.

\nobibitem{Watson}
G. S. Watson, 
Goodness-of-fit tests on a circle.  
\jour{Biometrika}  \vol{48} (1961), 109--114.

\nobibitem{Wright}
E. M. Wright,
The number of connected sparsely edged graphs. 
\emph{J. Graph Th.} \vol{1} (1977),
317--330.

\nobibitem{WrightIII}
E. M. Wright,
The number of connected sparsely edged graphs. III.
Asymptotic results.
\emph{J. Graph Th.} \vol{4} (1980),
393--407.



\end{thebibliography}
\end{document}